\documentclass[a4paper]{easychair}

\usepackage{amsfonts, amsmath, amsthm, amssymb} 
\usepackage{hyperref}
\usepackage{enumitem}
\setlist[enumerate]{label=(\roman*), leftmargin=*}
\usepackage{algorithm}
\usepackage{algpseudocode}
\usepackage{float} 
\algrenewcommand\algorithmicrequire{\textbf{Input:}}
\algrenewcommand\algorithmicensure{\textbf{Output:}}
\usepackage{tikz-cd}
\usepackage{tikz}
\definecolor{linkcolor}{RGB}{170,0,0}
\definecolor{citecolor}{RGB}{44,160,46}
\definecolor{urlcolor}{RGB}{0,51,153}
\hypersetup{
  linkcolor=linkcolor,
  citecolor=citecolor,
  urlcolor=urlcolor
}
\usetikzlibrary{patterns,arrows,fit}
\usetikzlibrary{arrows.meta}
\usetikzlibrary{intersections,calc}
\usetikzlibrary{automata, positioning}
\usepackage{etoolbox}

\newtheorem{theorem}{Theorem}[section]
\newtheorem{lemma}[theorem]{Lemma}
\newtheorem{proposition}[theorem]{Proposition}
\newtheorem{corollary}[theorem]{Corollary}

\newtheorem{remark}[theorem]{Remark}
\theoremstyle{plain}
\newtheorem{definition}[theorem]{Definition}
\newtheorem{example}[theorem]{Example}
\newtheorem{claim}[theorem]{Claim}

\newcommand{\ve}{\varnothing}
\newcommand{\powerset}[1]{{\mathcal{P}(#1)}}

\newcommand{\rsto}{{\upharpoonright}}
\newcommand{\sub}{\subseteq}
\newcommand{\set}[1]{{\{ #1 \}}}
\newcommand{\tup}[1]{{\langle #1 \rangle}}

\newcommand{\inver}[1]{{{#1}^{-1}}}

\newcommand{\Zp}{{\mathbb{Z}^+}}

\newcommand{\ModelStyle}[1]{\mathfrak{#1}}
\newcommand{\mm}{\ModelStyle{M}}

\newcommand{\FrameStyle}[1]{{\mathfrak{#1}}}
\newcommand{\ff}{{\FrameStyle{F}}}

\newcommand{\Ch}{\FrameStyle{Ch}}

\newcommand{\Cl}{\FrameStyle{Cl}}

\newcommand{\GeneralFrameStyle}[1]{\mathbb{#1}}
\newcommand{\gf}{\GeneralFrameStyle{F}}
\AtBeginDocument{
  
}

\newcommand{\ClassOfStructureStyle}[1]{\mathcal{#1}}
\newcommand{\CK}{\ClassOfStructureStyle{K}}

\newcommand{\FunctorStyle}[1]{\mathsf{#1}}

\newcommand{\Fin}{\FunctorStyle{Fin}}
\newcommand{\Fr}{\FunctorStyle{Fr}}

\newcommand{\RFr}{\FunctorStyle{RFr}}
\newcommand{\GFr}{\FunctorStyle{GFr}}

\newcommand{\Log}{\FunctorStyle{Log}}

\newcommand{\NExt}{\mathop{\FunctorStyle{NExt}}}
\newcommand{\Ext}{\mathop{\FunctorStyle{Ext}}}

\newcommand{\E}{\mathsf{E}}
\newcommand{\A}{\mathsf{A}}

\AtBeginDocument{

}

\newcommand{\Prop}{\mathsf{Prop}}

\AtBeginDocument{
\renewcommand{\phi}{\varphi}
}

\newcommand{\bd}{\blacklozenge}
\newcommand{\bb}{\blacksquare}

\newcommand{\D}{\Diamond}
\newcommand{\B}{\Box}

\newcommand{\is}[1]{{\Delta^{\leq #1}}}
\newcommand{\all}[1]{{\nabla^{\leq #1}}}

\newcommand{\SucPre}[1]{{#1}_\sharp}
\newcommand{\R}{\SucPre{R}}

\newcommand{\axiom}[1]{\mathsf{#1}}

\newcommand{\IL}[1]{\mathsf{#1}}
\newcommand{\ML}[1]{\mathsf{#1}}
\newcommand{\TL}[1]{\mathsf{#1}_{t}}

\newcommand{\FormT}{\mathsf{Form}_{t}}

\newcommand{\md}{\vDash}
\newcommand{\nmd}{\nvDash}

\newcommand{\M}{\mathcal{M}}
\newcommand{\Reach}{\mathcal{R}}

\usepackage{mathrsfs}
\newcommand{\Lt}{\mathscr{L}_t}

\newcommand{\commentout}[1]{}

\usepackage{xcolor}

\definecolor{newresult}{RGB}{0,94,176}
\definecolor{previousresult}{RGB}{212,92,0}

\newcommand{\newresult}[1]{\textcolor{newresult}{#1}}
\newcommand{\previousresult}[1]{\textcolor{previousresult}{#1}}

\newtheorem*{maintheorem}{Theorem \ref*{thm:undec-scheme-grzt}}
\newtheorem*{maintheoremtab}{Theorem \ref*{thm:undec-prop-S4t-=tab}}

\newcommand{\AB}{\set{a_i,b_i : i \in \omega}}
\newcommand{\tri}{\mathrm{tri}}

\setlist[enumerate]{label=(\arabic*)}

\title{
Most properties are undecidable even in $\NExt\TL{Grz}$
}

\author{
    Qian Chen\inst{1} and Tenyo Takahashi\inst{2}
}

\institute{
Department of Philosophy, Xiamen University \\
\href{mailto:chenq9901@xmu.edu.cn}{chenq9901@xmu.edu.cn}
\and
Institute for Logic, Language and Computation, University of Amsterdam \\
\href{mailto:t.takahashi@uva.nl}{t.takahashi@uva.nl}
}

\authorrunning{~}

\titlerunning{~}

\fancypagestyle{plain}{%
    \fancyhf{}%
    \fancyfoot[C]{\normalsize\thepage}%
}

\begin{document}

\maketitle

\begin{abstract}
    We investigate decidability of properties in the lattice $\NExt\TL{Grz}$ of extensions of the Grzegorczyk tense logic $\TL{Grz}$ and the lattice $\NExt\TL{S4}$ of reflexive and transitive tense logics, with applications to the lattice $\Ext\IL{biIPC}$ of bi-superintuitionistic logics. We prove that a broad class of properties is undecidable in $\NExt\TL{Grz}$, including tabularity, Kripke completeness, the finite model property, and decidability, which also yields their undecidability in $\NExt\TL{S4}$. We also construct infinitely many tabular extensions of $\TL{Grz}$ (and thus of $\TL{S4}$) whose coincidence problems are undecidable, while presenting one tabular extension of $\TL{Grz}$ and infinitely many ones of $\TL{S4}$ with a decidable coincidence problem. As a consequence, we obtain that the finite model property and tabularity are undecidable in $\Ext\IL{biIPC}$, and that there are infinitely many tabular extensions of $\IL{biIPC}$ whose coincidence problems are undecidable. These results clarify some similarities and differences between $\NExt\TL{Grz}$ and $\NExt{\ML{Grz}}$, $\NExt\TL{S4}$ and $\NExt\ML{S4}$, as well as $\Ext\IL{biIPC}$ and $\Ext\IL{IPC}$.  
    The proofs adapt Chagrov's method of reducing from an undecidable problem for Minsky machines. We isolate and explicitly formulate the method of good valuations, a recurring technique underlying several proofs in the literature that use large frames, making it available for further applications.
\end{abstract}

\section{Introduction}

This paper studies (un)decidability of properties for tense extensions of $\TL{Grz}$.
Decidability of properties of logics has been extensively studied for modal logics; see, e.g., \cite[Section 17.6]{Chagrov.Zakharyaschev1997} and \cite{Wolter.Zakharyaschev2007} for historical notes and surveys. Recall that a property $P$ in a set $\mathcal{C}$ of logics is identified with the subset $\{L \in \mathcal{C}: \text{$L$ has $P$}\}$. The property $P$ is \emph{decidable} in $\mathcal{C}$ if there is an algorithm which, for every finitely axiomatizable logic $L \in \mathcal{C}$ presented by a finite axiomatization, decides whether $L$ has the property $P$. For a modal logic $L$, let $\NExt L$ denote the lattice of all normal extensions of~$L$.

In the unimodal setting, most properties are undecidable in $\NExt{\mathsf{K}}$, including Kripke completeness, the finite model property (FMP), decidability, tabularity, and coincidence with a fixed tabular logic (see, e.g., \cite[Chapter 17]{Chagrov.Zakharyaschev1997}). On the positive side, decidable properties in $\NExt{\ML{K}}$ include consistency \cite{Makinson1971}, being a union-splitting, and strict Kripke completeness \cite{Takahashi2026}. 

Restricting to a smaller lattice of extensions can turn undecidable properties decidable. For instance, in the sublattice $\NExt{\mathsf{K4}}$ of $\NExt{\ML{K}}$, coincidence with a fixed tabular logic is decidable, as every tabular logic has only finitely many immediate predecessors and all of them are tabular \cite{Blok1980} (see also \cite[Theorem 17.3]{Chagrov.Zakharyaschev1997}). As far as we know, the decidability of tabularity in $\NExt{\ML{K4}}$ remains open \cite{rautenbergWillemBlokModal2006}, while it becomes decidable in the further sublattice $\NExt{\ML{S4}}$ \cite{Maksimova1975a}.
On the other hand, Chagrov and Zakharyaschev~\cite{Chagrov.Zakharyaschev1993} showed that the FMP and decidability are undecidable even in the sublattice $\NExt\ML{Grz}$ of $\NExt{\ML{S4}}$, where the Grzegorczyk logic $\ML{Grz}$ is the greatest modal companion of the intuitionistic logic $\IL{IPC}$. Note that the undecidability of Kripke completeness does not follow from their proofs directly and remains open.

Tense logics are normal bimodal logics extending the logic $\TL{K} = \mathsf{K}_2 \oplus \set{p \to \B\bd p, p \to \bb\D p}$, where $\mathsf{K}_2$ is the least normal bimodal logic with two modalities $\B$ and $\bb$. The intended meanings of $\B$ and $\bb$ are ``always true in the future'' and ``always true in the past'', respectively. The modern modal approach to logic of time was initiated by Prior~\cite{Prior1967,Prior1968}. Among polymodal logics, tense logics form one of the most well-studied classes; see, for example, \cite{Goldblatt1992,Kracht1999l,Burgess2002,Wolter1993,Chen2026a}. In this paper, we are mostly interested in $\TL{Grz}$, where in general $L_t$ is the minimal tense logic whose $\B$- and $\bb$-fragments both contain $L$.

Despite the minimal tense extension map $(\cdot)^+: \NExt{\mathsf{K}} \to \NExt{\TL{K}}$; $\mathsf{K} \oplus \phi \mapsto \TL{K} \oplus \phi$, and an embedding $\mathsf{sim}: \NExt\ML{K}_2 \to \NExt\ML{K}$ studied by Kracht and Wolter~\cite{Kracht.Wolter1999} (see also \cite{Thomason1974a}), the relationship between the (un)decidability of properties in $\NExt{L}$ and in $\NExt{L_t}$ is by no means obvious. The map $(\cdot)^+$ is not injective~\cite{Wolter1993} and preserves neither Kripke completeness nor the FMP \cite{Wolter1996}. The image $\mathsf{sim}[\NExt\TL{K4}]$ is not contained in $\NExt\ML{K4}$. Thus, we may not transfer the (un)decidability results directly from the unimodal setting to the tense setting. In general, the interactions between tense modalities make lattices of tense logics completely different from those of unimodal logics (see, e.g., \cite{Kracht1992,Ma.Chen2021,Ma.Chen2023,Chen.Ma2024}).

Chagrov and Shehtman~\cite{Chagrov.Shehtman1995} proved that tabularity, coincidence with a fixed tabular tense logic, and consistency are undecidable in $\NExt{\TL{K4}}$. Recently, the authors \cite{Chen.Takahashi2026} extended their method to obtain a general criterion for a property to be undecidable in $\NExt{\TL{K4}}$, which yields the undecidability of many properties, including the aforementioned ones as well as Kripke completeness, the FMP, and decidability. These results contrast the unimodal setting and the tense setting. Given these undecidability results in $\NExt{\TL{K4}}$ and the previous results in $\NExt{\ML{S4}}$ and $\NExt{\ML{Grz}}$, it is natural to ask about the decidability of these properties in $\NExt{\TL{S4}}$ and $\NExt{\TL{Grz}}$. 

An extra motivation to focus on ${\TL{Grz}}$ comes from its relation to the \textit{bi-intuitionistic logic} ${\IL{biIPC}}$ \cite{Rauszer1974}. Similar to the well-known Blok-Esakia theorem, which states that $\NExt{\ML{Grz}}$ is isomorphic to the lattice $\Ext{\IL{IPC}}$ of superintuitionistic logics, there is also a lattice isomorphism between $\NExt{\TL{Grz}}$ and $\Ext{\IL{biIPC}}$ \cite{wolterLogicsCoimplication1998,BezhanishviliBLOKESAKIATHEOREMSSTABLE2025a}. In particular, $\TL{Grz}$ is the greatest tense companion of $\IL{biIPC}$.

In this paper, we prove a general criterion for a property to be undecidable in $\NExt\TL{Grz}$, which implies the undecidability of many properties, including tabularity, Kripke completeness, the FMP, and decidability. It follows that these properties are also undecidable in $\NExt{\TL{S4}}$. Moreover, we prove that there exist infinitely many tabular logics in $\NExt{\TL{Grz}}$ with an undecidable coincidence problem. On the other hand, we show that there is a tabular logic in $\NExt\TL{Grz}$ whose coincidence problem is decidable, and there are infinitely many such logics in $\NExt{\TL{S4}}$. These results clarify some similarities and differences between the lattices $\NExt\TL{Grz}$ ($\NExt{\TL{S4}}$) and $\NExt{\ML{Grz}}$ ($\NExt{\ML{S4}}$). Moreover, transferring the results by the Blok-Esakia isomorphism between $\NExt{\TL{Grz}}$ and $\Ext\IL{biIPC}$ \cite{wolterLogicsCoimplication1998,BezhanishviliBLOKESAKIATHEOREMSSTABLE2025a}, we obtain the undecidability of the FMP and tabularity in $\Ext{\IL{biIPC}}$. It also follows that there are infinitely many tabular logics in $\Ext\IL{biIPC}$ with an undecidable coincidence problem. These results, again, show some similarities and differences between the lattices $\Ext\IL{biIPC}$ and $\Ext\IL{IPC}$.

Our proof adapts Chagrov's method of reducing an undecidable problem of Minsky machines, which was used to prove other undecidability results discussed in this introduction as well. However, there is a challenge regarding definability on frames compared to the case of $\NExt{\TL{K4}}$ \cite{Chen.Takahashi2026}. In that case, to simulate the behavior of a Minsky machine on a general frame, we defined points by variable-free formulas, whose truth is uniform for valuations. However, this is not possible for $\TL{S4}$-frames since all points are reflexive. To resolve this issue, we will employ a recurring technique underlying many proofs using large frames, such as other undecidability proofs based on Chagrov's method \cite{Chagrov.Zakharyaschev1993}, Fine's incompleteness proof \cite{Fine1974IncompleteLogicContainingS4}, and the proof of Blok's dichotomy theorem in \cite[Theorem 10.59]{Chagrov.Zakharyaschev1997}. The idea is to insert a special formula $\phi^*$ when constructing the reduction. It allows us to consider, in practice, only ``good'' valuations, i.e., valuations $V$ with $V(\phi^*) \neq \ve$. Then we can define points (using variables) uniformly for these good valuations. We formulate this method of \emph{good valuations} explicitly so that it is convenient for further applications. In the proofs in \cite{Chagrov.Zakharyaschev1993} for $\NExt{\ML{Grz}}$, good valuations are realized using a \emph{subframe formula} (see, e.g., \cite[Section 9.4]{Chagrov.Zakharyaschev1997}), which is not available for tense logics. Instead, we will carefully design our general frame that simulates the Minsky machine and exploit the expressiveness of tense logics to define $\phi^*$.

Table~\ref{table:UndecidableProperties} summarizes the results on the (un)decidability of some major properties discussed so far in lattices of modal and tense logics. The results established in this paper are highlighted in \textcolor{newresult}{blue}, while those established in our previous paper \cite{Chen.Takahashi2026} are highlighted in \textcolor{previousresult}{orange}.

\begin{table}[htbp]
    \centering
    {\renewcommand{\arraystretch}{1.5}
    \begin{tabular}{c|c|c|c|c|c|c}
        & Cons. & Tab. & Fixed Tab. & KC & FMP & Dec. \\ \hline
        $\NExt{\ML{K}}$ 
        & $\checkmark$ & $\times$ & $\times$ & $\times$ & $\times$ & $\times$ \\ \hline

        $\NExt{\ML{K4}}$ 
        & $\checkmark$ & ? & $\checkmark$ & $\times$ & $\times$ & $\times$ \\ \hline

        $\NExt{\ML{S4}}$ 
        & $\checkmark$ & $\checkmark$ & $\checkmark$ & ? & $\times$ & $\times$ \\ \hline

        $\NExt{\ML{Grz}}$ 
        & $\checkmark$ & $\checkmark$ & $\checkmark$ & ? & $\times$ & $\times$ \\ \hline


        $\NExt{\TL{K}}$
        & $\times$
        & $\times$
        & $\times$
        & \previousresult{$\times$}
        & \previousresult{$\times$}
        & \previousresult{$\times$} \\ \hline
        
        $\NExt{\TL{K4}}$
        & $\times$
        & $\times$
        & $\times$
        & \previousresult{$\times$}
        & \previousresult{$\times$}
        & \previousresult{$\times$} \\ \hline



        $\NExt{\TL{S4}}$
        & $\checkmark$
        & \newresult{$\times$}
        & \newresult{depends}
        & \newresult{$\times$}
        & \newresult{$\times$}
        & \newresult{$\times$} \\
        \hline
        
        $\NExt\TL{Grz}$
        & $\checkmark$
        & \newresult{$\times$}
        & \newresult{depends}
        & \newresult{$\times$}
        & \newresult{$\times$}
        & \newresult{$\times$} \\





    \end{tabular} 
    }

    \vspace{0.5em}
    {\footnotesize
    \textbf{Abbreviations.}
    Cons.: consistency;
    Tab.: tabularity;
    Fixed Tab.: coincidence with a fixed tabular logic;\\
    KC: Kripke completeness;
    FMP: finite model property;
    Dec.: decidability.
    }
    \caption{(Un)decidability of properties for modal and tense logics}
    \label{table:UndecidableProperties}
\end{table}

We also provide in Table~\ref{table:UndecidableProperties-IPC} an analogous overview in the lattices of superintuitionistic and bi-superintuitionistic logics \cite{Maksimova1972,Chagrov.Zakharyaschev1993}. Results proved in this paper are highlighted in \textcolor{newresult}{blue}.

\begin{table}[htbp]
    \centering
    {\renewcommand{\arraystretch}{1.5}
    \begin{tabular}{c|c|c|c|c|c|c}
        & Cons. & Tab. & Fixed Tab. & KC & FMP & Dec. \\ \hline

        $\Ext{\IL{IPC}}$ 
        & $\checkmark$ & $\checkmark$ & $\checkmark$ & ? & $\times$ & $\times$ \\ \hline

        $\Ext{\IL{biIPC}}$
        & $\checkmark$
        & \newresult{$\times$}
        & \newresult{depends}
        & ?
        & \newresult{$\times$}
        & ? \\


    \end{tabular} 
    }
    
    \vspace{0.5em}
    {\footnotesize
    \textbf{Abbreviations.}
    Cons.: consistency;
    Tab.: tabularity;
    Fixed Tab.: coincidence with a fixed tabular logic;\\
    KC: Kripke completeness;
    FMP: finite model property;
    Dec.: decidability.
    }
    \caption{(Un)decidability of properties for superintuitionistic and bi-superintuitionistic logics}
    \label{table:UndecidableProperties-IPC}
\end{table}

This paper is organized as follows. Section~\ref{sec 2} summarizes necessary preliminaries. In Section~\ref{sec 3}, we briefly discuss some decidable properties in $\NExt{\TL{Grz}}$ and $\NExt{\TL{S4}}$. Section~\ref{sec:S4t--chap:Undecidability} focuses on undecidable properties in $\NExt{\TL{Grz}}$ and is the main part of the paper. In Section~\ref{subsec:main theorems}, we discuss the two main theorems, Theorems~\ref{thm:undec-scheme-grzt} and \ref{thm:undec-prop-S4t-=tab}, and their consequences. The rest of Section~\ref{sec:S4t--chap:Undecidability} is devoted to proving these theorems. 

This paper is partially based on \cite[Chapter~6.3]{Chen2026a}.

\section{Preliminaries} \label{sec 2}

\subsection{Tense logics} \label{sec 2.1}

We recall the basics of tense logic in this subsection. We refer to \cite{Chagrov.Zakharyaschev1997,Kracht1999l,Blackburn.deRijke.ea2001} for modal logic in general. The formal language of tense logic is obtained by adding two modal operators $\B$ and $\bb$ to the language of propositional logic. Thus, a unimodal formula can be viewed as a tense formula in the natural way. 
A \emph{tense logic} is a normal bimodal logic that contains the axioms $p \to \B\bd p$ and $p \to \bb\D p$. Let $\TL{K}$ denote the \emph{minimal tense logic}. For each tense logic $L$, let $\NExt L$ denote the lattice of all normal extensions of $L$. For every tense logic $L$ and set of formulas $\Sigma$, we write $L\oplus\Sigma$ for the smallest tense logic containing $L\cup\Sigma$. We write $L \oplus \phi$ for $L \oplus \set{\phi}$. A tense logic $L$ is \emph{finitely axiomatizable} if $L = \TL{K} \oplus \phi$ for some $\phi \in \Lt$.
A tense logic $L$ is \emph{consistent} if $\bot\not\in L$, and so the only inconsistent tense logic is $\Lt$.

A \emph{general frame} is a triple $\gf=(X,R,A)$ where $X$ is a non-empty set, $R$ a binary relation on $X$ and $A$ a subset of $\powerset{X}$ such that (i) $\ve \in A$, and (ii) $A$ is closed under Boolean operations on $\powerset{X}$, $R[\cdot]$ and $\inver{R}[\cdot]$, where $R[Y] \coloneqq \set{x \in X: \exists{y\in Y}(Ryx)}$ and $\inver{R}[Y] \coloneqq \set{x \in X: \exists{y\in Y}(Rxy)}$ for all $Y \sub X$. A general frame $\gf = (X,R,A)$ is called a \emph{refined frame} if it is (i) \emph{differentiated}: for all distinct points $x,y \in X$, there exists $U \in A$ such that $x \in U$ and $y \notin U$; and (ii) \emph{tight}: for all points $x,y \in X$, if $(x,y) \notin R$, then there exists $U \in A$ such that $y \in U$ and $x \notin \inver{R}[U]$. A \emph{Kripke frame} $\ff$ is a general frame of the form $(X,R,\powerset{X})$ and we simply write $(X,R)$. Let $\GFr$, $\RFr$, $\Fr$, and $\Fin$ denote the classes of all general frames, refined frames, Kripke frames, and finite Kripke frames, respectively.

A \emph{model} is a pair $\mm=(\gf,V)$ where $\gf\in\GFr$ and $V: \Prop \to A$ a valuation on $\gf$. $V$ is extended to $V:\Lt\to A$ as usual: $V(\bd\phi) = R[V(\phi)]$ and $V(\B\phi) \coloneqq X \setminus \inver{R}[X\setminus V(\phi)]$. The expressions $\mm,x \md \phi$, $\gf,x \md \phi$, $\gf \md \phi$ and $\gf \md \Sigma$ are defined as usual. Note that $\mm,x \md \bd \phi$ if and only if $\mm,y \md \phi$ for some $y \in \inver{R}[x]$. For all sets $\Sigma\sub\Lt$ of formulas and classes $\mathcal{K}\sub\GFr$ of general frames, let 
    \begin{center}
        $\mathcal{K}(\Sigma) \coloneqq \set{\gf\in\mathcal{K}:\gf\vDash\Sigma}$ and $\mathsf{Log}(\mathcal{K}) \coloneqq \set{\phi:\mathcal{K}\vDash\phi}$.
    \end{center}
For example, given a tense logic $L$, we write $\Fin(L)$ for the class of all finite frames for $L$. We call $\mathsf{Log}(\mathcal{K})$ the \emph{tense logic of $\mathcal{K}$}.

For a tense formula $\phi$, let $\phi^\partial$ denote the tense dual of $\phi$ obtained from $\phi$ by interchanging $\B$ and $\bb$. In particular, for a unimodal formula $\phi$, the formula $\phi^\partial$ is obtained from $\phi$ by replacing $\B$ with $\bb$. Recall that for each frame $\ff = (X,R)$, we write $\inver{\ff}$ for its inverse frame $(X,\inver{R})$. The reader can readily check that for all $\phi \in \FormT$, we have $\ff\md\phi$ if and only if $\inver{\ff}\md\phi^\partial$.

For a unimodal logic $L = \ML{K} \oplus \phi$ (where $\ML{K}$ is the least normal unimodal logic), let $L^+$ and $L_t$ denote the tense logics $\TL{K}\oplus\phi$ and $\TL{K} \oplus \phi \oplus \phi^\partial$, respectively. One may think of $L^+$ as the minimal tense extension of $L$, and $L_t$ as the tense counterpart of $L$. Note that for modal logics such as $\ML{K}$, $\ML{K4}$, and $\ML{S4}$, their minimal extensions coincide with their tense counterpart, while in general this is not true.

Recall the \emph{Grzegorczyk formula} $\axiom{grz}$ defined as follows:
\begin{align*}
\axiom{grz} &\coloneqq \B(\B(p\to\B p)\to p)\to p.
\end{align*}
It is known that a frame $\ff$ validates $\axiom{grz}$ if and only if $\ff$ is reflexive, transitive, antisymmetric, and \emph{Noetherian}, meaning that it contains no infinite ascending chain (see, e.g., \cite[Proposition~3.48]{Chagrov.Zakharyaschev1997}). It follows that the following proposition holds.

\begin{proposition}\label{prop:grz}
    Let $\ff=(X,R)$ be a reflexive and transitive frame. 
    \begin{enumerate}
        \item $\ff\md\axiom{grz}$ if and only if $\ff$ is antisymmetric and Noetherian;
        \item $\ff\md\axiom{grz}^\partial$ if and only if $\inver{\ff}$ is antisymmetric and Noetherian;
        \item for all $x\in X$, we have $\ff,x\md\axiom{grz}^\partial$ if and only if $\inver{\ff},x\md\axiom{grz}$.
    \end{enumerate}
\end{proposition}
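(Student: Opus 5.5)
All three items follow from the cited characterization, \cite[Proposition~3.48]{Chagrov.Zakharyaschev1997}: a Kripke frame validates $\axiom{grz}$ iff it is reflexive, transitive, antisymmetric and Noetherian. Item~(3) is a local duality statement. Once it is in hand, item~(2) follows from it together with item~(1) applied to the inverse frame.

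For item~(1), since $\ff$ is assumed reflexive and transitive, the cited characterization collapses to: $\ff\md\axiom{grz}$ iff $\ff$ is antisymmetric and Noetherian. Nothing further is needed. If I wanted to avoid relying on the full citation, I would verify directly that $\axiom{grz}$ forces reflexivity and transitivity. Take a refuting point and the valuation built from an $R$-chain. The point of the citation, however, is exactly that it packages this.

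For item~(3), I argue by induction on formulas. For every valuation $V$ on $X$ (the same carrier for $\ff$ and $\inver{\ff}$), every formula $\psi$ and every $x\in X$, I show that $(\ff,V),x\md\psi^\partial$ iff $(\inver{\ff},V),x\md\psi$. The Boolean cases are immediate. For the modal cases:
\begin{itemize}
\item $(\B\psi)^\partial=\bb\psi^\partial$ is evaluated in $\ff$ along $\inver{R}$-predecessors, that is, along $R$-successors in the inverse frame. So it agrees with $\B\psi$ evaluated in $\inver{\ff}$.
\item The case $\bb$ is symmetric.
\end{itemize}
Quantifying over all $V$ gives the pointwise equivalence of frame validity, $\ff,x\md\axiom{grz}^\partial$ iff $\inver{\ff},x\md\axiom{grz}$. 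This is the pointwise refinement of the fact, noted in the preliminaries, that $\ff\md\phi$ iff $\inver{\ff}\md\phi^\partial$.

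For item~(2), item~(3) taken over all $x$ gives $\ff\md\axiom{grz}^\partial$ iff $\inver{\ff}\md\axiom{grz}$. Since $\ff$ is reflexive and transitive, so is $\inver{\ff}$. Applying item~(1) to $\inver{\ff}$ then gives the claim.

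No step is a real obstacle. The only care needed is in the modal case of the induction: one must check that the paper's convention $V(\bd\phi)=R[V(\phi)]$ matches "predecessors" in $\ff$, and hence "successors" in $\inver{\ff}$.
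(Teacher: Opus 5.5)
Your proposal is correct and takes the same route as the paper, which simply derives the proposition from the cited Chagrov--Zakharyaschev characterization of $\axiom{grz}$-frames together with the duality $\ff\md\phi$ iff $\inver{\ff}\md\phi^\partial$ stated in the preliminaries. Your induction on formulas correctly verifies the pointwise form of that duality, giving (3); applying (1) to $\inver{\ff}$, which is again reflexive and transitive, then yields (2) as you describe.
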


In this paper, we will be mainly working with extensions of the logic $\TL{Grz} = \TL{K} \oplus \axiom{grz} \oplus \axiom{grz}^\partial$. Note that it is an extension of $\TL{S4}$.

Below, we list some properties for tense logics studied in the literature. Let $L$ be a tense logic. Then (i) $L$ is \emph{Kripke complete}, if $L = \Log(\Fr(L))$; (ii) $L$ has the \emph{finite model property} (\emph{FMP}), if $L = \Log(\Fin(L))$; (iii) $L$ is \emph{tabular}, if $L = \Log(\ff)$ for some finite frame $\ff$; (iv) $L$ is \emph{canonical}, if $L=\mathsf{Log}(\ff^L)$, where $\ff^L$ is the \emph{canonical frame} for $L$; (v) $L$ is \emph{elementary}, if $L = \Log(\CK)$, where $\CK$ is a class of Kripke frames defined by a set of first-order sentences. Moreover, we say that $L$ is \emph{locally tabular} if for each $n \in \omega$, $L$ contains only finitely many non-$L$-equivalent formulas built up from the propositional variables $p_{0}, \cdots, p_{n-1}$. We say that $L$ is \emph{decidable} if there is an algorithm that, given a formula $\phi$, decides whether $\phi \in L$. This \emph{membership decidability} is a property for logics, and should not be confused with decidability of logics' properties; in particular, it is legitimate to say decidability, as a property, is decidable or not. Finally, for a logic $L$, \emph{coincidence with $L$} refers to the property $\set{L}$.


Finally, let us recall some useful formulas that bound the complexity of frames. For any general frame $\gf = (X,R,A)$, a sequence $\mathcal{Y}=\tup{y_i\in X:i<\alpha}$ is called a (i) \emph{strict chain} in $\gf$ if $Ry_\lambda y_\gamma$ for all $\lambda<\gamma<\alpha$ and $y_\lambda\not\in R[y_\gamma]$ for all $\lambda<\gamma<\alpha$; (ii) an \emph{anti-chain} in $\gf$ if $y_\lambda\not\in R[y_\gamma]$ for all $\lambda\neq\gamma<\alpha$.
For each $n > 0$, we define formulas $\axiom{bw}_n$ by
    \begin{align*}
        \axiom{bw}_n &\coloneqq \bigwedge_{i\leq n}\D p_i\to\bigvee_{i\neq j\leq n}\D(p_i\wedge(p_j\vee\D p_j)).
    \end{align*}
Moreover, we recursively define formulas $\axiom{bd}_n$ for $n > 0$ as follows:
    \begin{align*}
        \axiom{bd}_1 &\coloneqq \D\B q_0\to q_0,\\
        \axiom{bd}_{k+1} &\coloneqq \D(\B q_{k}\wedge\neg\axiom{bd}_k)\to q_{k}.
    \end{align*}
These formulas, together with the tense dual $\axiom{bw}^\partial_n = \bigwedge_{i\leq n}\bd p_i\to\bigvee_{i\neq j\leq n}\bd(p_i\wedge(p_j\vee\bd p_j))$ of $\axiom{bw}_n$, bound the complexity of frames in the following sense. For any rooted refined frame $\gf=(X,R,A)$ validating $\TL{S4}$ and $n > 0$, the following hold (see, e.g., \cite[Chapter 3]{Chagrov.Zakharyaschev1997}).
    \begin{itemize}
        \item $\gf\vDash\axiom{bd}_n$ iff $\gf$ contains no strict chain of size greater than $n$;
        \item $\gf\vDash\axiom{bw}_n$ iff for any $x \in X$, $R[x]$ contains no strict anti-chain of size greater than $n$;
        \item $\gf\vDash\axiom{bw}^\partial_n$ iff for any $x \in X$, $R^{-1}[x]$ contains no strict anti-chain of size greater than $n$.
    \end{itemize}
For each general frame $\gf = (X,R,A)$, we define the binary relation $\R$ as follows:
\begin{center}
    $\R = \set{(x,y) \in X \times X : x=y \text{ or } Rxy \text{ or } Ryx}$.
\end{center}
Let $\R^0$ be the diagonal relation and we define $\R^n \coloneqq \R^{n-1}\circ\R$ for all $n \in \Zp$. Intuitively, $\R^n[x]$ is the set of points that $x$ can reach in $n$ steps. Correspondingly, we define a set of new operators $\is{n}$ and their duals $\all{n}$. These operators serve as ``$n$-step modalities'' and will play an important role in our proofs.

\begin{definition}
    For each $n\in\omega$ and $\phi,\psi\in\Lt$, we define the formula $\is{n}\phi$ by:
\begin{center}
    $\is{0}\phi=\phi$ and $\is{k+1}\phi=\is{k}\phi\vee\D\is{k}\phi\vee\bd\is{k}\phi$.
\end{center}
As usual, we define the dual operator $\all{n}$ of $\is{n}$ by $\all{n}\phi\coloneqq\neg\is{n}\neg\phi$. 
\end{definition}

\begin{proposition}
    Let $\mm=(X,R,V)$ be a model, $x\in X$ and $\phi\in\Lt$. Then for all $k\in\omega$,
    \begin{center}
        $\mm,x\md\is{k}\phi$ if and only if $\mm,y\md\phi$ for some $y\in\R^k[x]$.
    \end{center}
\end{proposition}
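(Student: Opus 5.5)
We argue by induction on $k$, with $\phi$ and the model $\mm$ fixed, using only the semantic clauses for $\D$ and $\bd$ together with the definitions of $\R$ and $\R^k$.

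\textbf{Base case.} For $k=0$ we have $\is{0}\phi=\phi$. Also $\R^0$ is the diagonal relation, so $\R^0[x]=\set{x}$. The claim then reads $\mm,x\md\phi$ iff $\mm,y\md\phi$ for some $y\in\set{x}$, which is immediate.

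\textbf{Inductive step.} Assume the claim holds for $k$ and all points of $X$. By definition,
$\mm,x\md\is{k+1}\phi$ holds iff at least one of the following holds:
\begin{enumerate}
\item $\mm,x\md\is{k}\phi$;
\item $\mm,z\md\is{k}\phi$ for some $z$ with $Rxz$;
\item $\mm,z\md\is{k}\phi$ for some $z$ with $Rzx$.
\end{enumerate}
The three conditions on $z$ (namely $z=x$, $Rxz$, $Rzx$) together say exactly that $(x,z)\in\R$. Hence $\mm,x\md\is{k+1}\phi$ iff $\mm,z\md\is{k}\phi$ for some $z$ with $(x,z)\in\R$.

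Now apply the induction hypothesis at $z$. This turns the condition into: there exist $z$ and $y$ with $(x,z)\in\R$ and $y\in\R^k[z]$ such that $\mm,y\md\phi$. It remains to check that such pairs $(x,z)$, $(z,y)$ compose to exactly $\R^{k+1}[x]$. The paper writes $\R^{k+1}=\R^k\circ\R$ and describes $\R^{k+1}[x]$ as the points reachable from $x$ in $k+1$ steps. Under either convention for relational composition the result is the same, for two reasons. First, $\R$ is symmetric, so $\R^k$ is symmetric as well. Second, $\R^k$ commutes with $\R$, since both orders of composition give the set of $\R$-paths of length $k+1$. So $\R^{k+1}[x]$ is the set of $y$ reachable by one $\R$-step from $x$ followed by $k$ further steps, which completes the induction.

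\textbf{Point to check.} The only step needing care is the matching of semantic conventions. We need $\mm,x\md\D\psi$ to mean that $\psi$ holds at some $R$-successor of $x$, and $\mm,x\md\bd\psi$ to mean that $\psi$ holds at some $R$-predecessor of $x$. The paper states that $\mm,x\md\bd\phi$ iff $\mm,y\md\phi$ for some $y\in\inver{R}[x]$. Taken literally with its definition of $\inver{R}[\cdot]$, this gives successors rather than predecessors, and then $\D$ would give predecessors. Either way, $\D$ and $\bd$ together cover exactly the two directions of $R$. Since $\R$ is symmetric in $x$ and $y$, the disjunction of the two is unaffected, so the argument does not depend on which direction each operator takes.
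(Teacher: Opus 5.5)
Your induction on $k$ is correct and is the routine argument behind this proposition, which the paper states without proof. One small correction to your last paragraph: since $\inver{R}[Y]=\set{x:\exists y\in Y\,(Rxy)}$, the set $\inver{R}[x]$ is the set of $R$-predecessors of $x$, so the paper's clause for $\bd$ agrees with $V(\bd\phi)=R[V(\phi)]$ and there is no convention clash (though, as you note, the argument does not depend on this anyway).
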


For every $n \in \Zp$, let 
\[\axiom{br}_n \coloneqq \is{n+1}p\to\is{n}p.\] 
For every rooted refined frame $\gf = (X,R,A)$, we have $\gf \md \axiom{br}_n$ iff $X = \R^{n}[x]$ for all $x \in X$.

We recall the following result from \cite{Chen2026a}, which will be used in Section~\ref{sec:S4t--chap:Undecidability}.

\begin{theorem}\label{thm:grz+bp}
    For all $k,l,n,m \in \Zp$, every extension of $\TL{Grz} \oplus \set{\axiom{bd}_k,\axiom{br}_l,\axiom{bw}_n,\axiom{bw}^\partial_m}$ is tabular.
\end{theorem}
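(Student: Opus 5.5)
Since the statement is recalled from \cite{Chen2026a}, I would reprove it as a local finiteness argument. Put $L_0 = \TL{Grz} \oplus \set{\axiom{bd}_k,\axiom{br}_l,\axiom{bw}_n,\axiom{bw}^\partial_m}$. The plan is to show that there is a finite bound $N = N(k,l,n,m)$ such that every rooted refined frame for $L_0$ has at most $N$ points. Tabularity of an arbitrary extension $L \supseteq L_0$ then follows by the usual argument. Every tense logic is complete with respect to its refined (descriptive) frames, and each such frame decomposes into rooted components, namely connected components under $\R$. Each rooted refined frame of $L$ is a frame for $L_0$, so it is finite of size $\le N$ and hence a Kripke frame. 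Up to isomorphism there are only finitely many such frames, $\ff_1,\dots,\ff_r$. Since validity in a disjoint union reduces to validity in its components, $L = \Log(\set{\ff_1,\dots,\ff_r}) = \Log(\ff_1 \uplus \dots \uplus \ff_r)$, a finite frame, so $L$ is tabular.

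For the bound, fix a rooted refined $L_0$-frame $\gf=(X,R,A)$ and a point $x$. Since $\gf\md\axiom{br}_l$, we have $X=\R^{l}[x]$, so it suffices to bound $|\R[y]|$ uniformly by some $M$; then $|X| \le (M+1)^l$.

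Now $\R[y] = R[y]\cup\inver{R}[y]$, where $R[y]$ is the set of predecessors (the past cone) and $\inver{R}[y]$ is the set of successors (the future cone). Consider the future cone $\inver{R}[y]$. Refined frames of $\TL{Grz}$ are partial orders: reflexive and transitive by $\TL{S4}$, and antisymmetric by differentiatedness together with $\axiom{grz}$ holding on refined frames. In this cone, $\axiom{bd}_k$ bounds chains by $k$ and $\axiom{bw}_n$ bounds antichains by $n$. By Dilworth's theorem (or simply partitioning into at most $k$ levels, each an antichain), $|\inver{R}[y]| \le kn$. The past cone is handled the same way: $\axiom{bd}_k$ bounds strict chains in either direction, and $\axiom{bw}^\partial_m$ bounds antichains among predecessors, giving $|R[y]| \le km$. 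Hence $M = k(n+m)$ works.

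The main obstacle is the careful use of the cited characterizations on refined rather than Kripke frames. These are the characterizations of $\axiom{bd}_k$, $\axiom{bw}_n$, $\axiom{bw}^\partial_m$, $\axiom{br}_l$, and antisymmetry under $\axiom{grz}$. They are stated for rooted refined $\TL{S4}$-frames, so I would simply invoke them as stated. The one point I would verify by hand is antisymmetry. If $Rxy$ and $Ryx$ with $x\neq y$, differentiatedness gives a $U\in A$ with $x\in U$, $y\notin U$. Taking $V(p)=U$, the formula $\axiom{grz}$ then fails at $x$: the proper cluster is witnessed by an admissible set. I would then check that Dilworth-style counting needs only finite chain and antichain bounds, which holds because finite height together with finite width forces finiteness.
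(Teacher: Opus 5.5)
\noindent Your global architecture is sound. A uniform size bound on rooted refined $L_0$-frames gives tabularity of every extension. Once the frame is known to be a partial order, your Mirsky-style count in the two cones is also correct. (In the paper's notation $R[y]$ is the set of successors and $\inver{R}[y]$ the set of predecessors, so your labels are swapped; this is harmless.) The paper only quotes this theorem, so your argument has to stand alone, and there is a genuine gap exactly at the step you chose to verify by hand: \textbf{antisymmetry}.

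\noindent This step carries the whole weight of $\axiom{grz}$. Without it nothing bounds cluster size, since every cluster $\Cl_r$ validates $\TL{S4}\oplus\set{\axiom{bd}_1,\axiom{br}_1,\axiom{bw}_n,\axiom{bw}^\partial_m}$. Your verification does not work:
\begin{itemize}
\item With $V(p)=U\ni x$, $p$ is true at $x$, so $\axiom{grz}$ holds there trivially.
\item With $V(p)=X\setminus U$, $\axiom{grz}$ fails at $x$ only if every point of $R[x]\cap U$ is \emph{bad}, i.e.\ sees a point outside $U$ that sees back into $U$. The cluster points are bad (via $y$), but $x$ may see good points of $U$ strictly above its cluster.
\end{itemize}
In fact, differentiatedness together with $\axiom{grz}$ and $\axiom{grz}^\partial$ does not imply antisymmetry. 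Put a cluster $\set{c_1,c_2}$ between an ascending chain $l_0<l_1<\cdots$ below it and a descending chain $\cdots<u_1<u_0$ above it. Let $U\in A$ iff for each $e\in\set{0,1}$ the set $U\cap(\set{c_{e+1}}\cup\set{l_i,u_i : i\equiv e \pmod 2})$ is either finite and omits $c_{e+1}$, or cofinite and contains it. This is a refined (indeed descriptive) tense frame. Every nonempty $U\in A$ has a greatest element lying in one of the chains; that element is good and is seen by all of $U$. So $\axiom{grz}$ is valid, and $\axiom{grz}^\partial$ is valid by symmetry, yet $c_1\neq c_2$ see each other.

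\noindent The repair must use $\axiom{bd}_k$.
\begin{enumerate}
\item Let $C\ni x\neq y$ be a proper cluster of least depth $d$; depths are bounded by $\axiom{bd}_k$. Then all points of depth $<d$ are singleton clusters, and $R[x]\setminus C$ consists of points of depth $<d$.
\item Take $U\in A$ with $x\in U$ and $y\notin U$. Put $Q_0=U$ and $Q_{j+1}=Q_j\cap\inver{R}[(X\setminus Q_j)\cap\inver{R}[Q_j]]$, the bad points of $Q_j$.
\item Each $Q_j$ is admissible and contains $x$, since $x$ leaves through $y$ and returns.
\item By induction, $Q_j$ contains no point of depth $\le j$ for $j\le d-1$. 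A point of depth $j+1<d$ in $Q_j$ is a singleton cluster whose other successors, and all their successors, have depth $\le j$ and hence lie outside $Q_j$. So it is good and is dropped.
\item Hence $R[x]\cap Q_{d-1}\subseteq C$, and every point of $C\cap Q_{d-1}$ is bad. Therefore $V(p)=X\setminus Q_{d-1}$ refutes $\axiom{grz}$ at $x$.
\end{enumerate}
With this lemma in place, the rest of your proof goes through.
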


\subsection{Decision problem of properties} \label{subsec2.2}

In this section, we briefly recall decision problems of properties. While we present the notions for tense logics, they apply to other non-classical logics as well, including modal logics in general, superintuitionistic logics, and bi-superintuitionistic logics. 
For the details and further references, we refer the reader to \cite[Chapter 17]{Chagrov.Zakharyaschev1997} and \cite{Wolter.Zakharyaschev2007}. 

Let $L_0$ be a tense logic, considered as the base logic. We identify a property $P$ in the lattice $\NExt{L_0}$ with the set of logics in $\NExt{L_0}$ that have the property $P$, that is, 
\[P = \{L \in \NExt{L_0}: L \text{ has } P\}.\] 
The informal description of decidability of logics we have used several times is formalized as the following definition.

\begin{definition} \label{def:decidable}
    Let $L_0$ be a tense logic. A property $P$ is \emph{decidable} in $\NExt{L_0}$ if and only if the set $\{\phi: L_0 \oplus \phi \in P\}$ is decidable.
\end{definition}

Following the convention, we restrict ourselves to finitely axiomatizable logics because an input for an algorithm must be a finite object. Technically, one could also consider recursively axiomatizable logics, with algorithms enumerating their axioms as inputs. However, Kuznetsov showed that only trivial properties can be decidable in this setting, similar to Rice's theorem (see, e.g., \cite[Section 17.1]{Chagrov.Zakharyaschev1997}). Thus, we only consider finitely axiomatizable logics, and use finite sets of formulas axiomatizing the logics, or equivalently, a single formula axiomatizing the logics as inputs. Since most logics we encounter in practice are finitely axiomatizable, this is not a serious drawback. Moreover, this formalization aligns with our usual expression of logics such as $\TL{K4} = \TL{K} \oplus \B \B p \to \B p$.

We note a direct observation of the above definition of decidability of properties. Determining a property in a larger lattice of logics is at least as hard as in a smaller one, in the following sense.

\begin{proposition} \label{prop:finite-extension}
    Let $L$ be a tense logic and $L'$ an extension of $L$ with finitely many axioms. If a property $P$ is undecidable in $\NExt{L'}$, then it is undecidable in $\NExt{L}$.
\end{proposition}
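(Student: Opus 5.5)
We argue by contraposition, using a direct reduction. Suppose $P$ is decidable in $\NExt{L}$, i.e., the set $D_L \coloneqq \{\phi : L \oplus \phi \in P\}$ is decidable. We want to decide $D_{L'} \coloneqq \{\phi : L' \oplus \phi \in P\}$. Since $L'$ is an extension of $L$ by finitely many axioms, we fix formulas $\psi_1,\dots,\psi_k$ with $L' = L \oplus \set{\psi_1,\dots,\psi_k}$. We then set $\psi \coloneqq \psi_1 \wedge \dots \wedge \psi_k$, so that $L' = L \oplus \psi$, because a normal logic contains a conjunction iff it contains each conjunct.

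The key step is the identity
\[ L' \oplus \phi = (L \oplus \psi) \oplus \phi = L \oplus (\psi \wedge \phi) \]
for every formula $\phi$. Both sides are the least normal tense logic containing $L \cup \set{\psi,\phi}$, using again that $\psi \wedge \phi$ belongs to a normal logic iff $\psi$ and $\phi$ both do. Consequently $\phi \in D_{L'}$ if and only if $\psi \wedge \phi \in D_L$.

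The map $\phi \mapsto \psi \wedge \phi$ is computable, since $\psi$ is a fixed formula. It is therefore a many-one reduction of $D_{L'}$ to $D_L$. Decidability of $D_L$ would then give decidability of $D_{L'}$, contradicting the hypothesis that $P$ is undecidable in $\NExt{L'}$.

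One subtlety must be addressed: properties in $\NExt{L'}$ are identified with subsets of $\NExt{L'}$. The set $P$ in $\NExt{L'}$ is just the restriction of $P$ in $\NExt{L}$, as both are given by the same predicate on logics, so the membership conditions agree for logics extending $L'$. There is no real obstacle here; the only step needing care is the equality of the two logics above, which follows directly from the definition of $\oplus$ as the least normal extension.
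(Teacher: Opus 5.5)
Your proposal is correct and is essentially the paper's proof: argue by contraposition, merge the finitely many extra axioms into a single formula $\psi$, and decide whether $L' \oplus \phi$ has $P$ by asking whether $L \oplus (\psi \wedge \phi)$ has $P$, since the two are the same logic. Your extra remarks, that $\phi \mapsto \psi \wedge \phi$ is computable and that $P$ on $\NExt{L'}$ is the restriction of $P$ on $\NExt{L}$, only make explicit what the paper leaves implicit.
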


\begin{proof}
    We may assume $L' = L \oplus \phi$ for a formula $\phi$. We prove the contrapositive. Let $P$ be a decidable property in $\NExt{L}$. Then, given a formula $\psi$, we can determine whether $L' \oplus \psi$ has $P$ by asking whether $L \oplus (\phi \land \psi)$ has $P$ since the two logics are the same.
\end{proof}

\subsection{Minsky machines} \label{Subsec-Minsky}

In this section, we recall the basics of Minsky machines, which play an important role in our proofs. For more details, we refer to \cite[Sections 16 and 17]{Chagrov.Zakharyaschev1997} and \cite{minskyComputationFiniteInfinite1967}.
By a \emph{Minsky machine} we mean a finite set $\M$ of instructions involving finitely many \emph{states} and acting on two \textit{registers}, which are supposed to store one natural number each. A \emph{configuration} of a Minsky machine is a tuple $\tup{s, n, m}$, where $s$ is the current state of the machine, and $n$ and $m$ are the natural numbers in each register. An \textit{instruction} $I \in \M$ operates on the state and one of the two registers. More specifically, $I$ has one of the following four forms: 

\begin{itemize}
    \item $I = t \to \tup{t', 1, 0}$. In this case, $I$ turns the state $t$ into $t'$ and increments the first register;
    \item $I = t \to \tup{t', 0, 1}$. In this case, $I$ turns the state $t$ into $t'$ and increments the second register;
    \item $I = t \to \tup{t', -1, 0} (\tup{t'', 0, 0})$. In this case, $I$ turns the state $t$ into $t'$ and decrements the first register if the number in the first register is non-zero, and turns the state $t$ into $t''$ without modifying the registers otherwise;
    \item $I = t \to \tup{t',0, -1} (\tup{t'', 0, 0})$. In this case, $I$ turns the state $t$ into $t'$ and decrements the second register if the number in the second register is non-zero, and turns the state $t$ into $t''$ without modifying the registers otherwise.
\end{itemize}

For example, after applying $I = s \to \tup{s', -1, 0} (\tup{s'', 0, 0})$ to the configuration $\tup{s, n, m}$, we obtain the configuration $\tup{s', n-1, m}$ if $n \geq 1$ and the configuration $\tup{s'', n, m}$ if $n = 0$. 

In this paper, Minsky machines are assumed to be \emph{deterministic}, that is, for each state $t$ there is at most one instruction that acts on the state $t$. For a Minsky machine $\M$, we write $\M: \tup{s,n,m} \rightsquigarrow \tup{t,k,l}$ if, starting from the configuration $\tup{s,n,m}$, by applying the instructions in $\M$, we can reach the configuration $\tup{t,k,l}$ in finitely many (possibly 0) steps. For each configuration $\tup{s,n,m}$, let $\Reach_{\M}(\tup{s,n,m})$ denote the set
\begin{center}
    $\set{\tup{t,k,l}: (\M: \tup{s,n,m} \rightsquigarrow \tup{t,k,l})}$.
\end{center}
We call \emph{$\Reach_{\M}(\tup{s,n,m})$} the \emph{reachability set} of $\tup{s,n,m}$ in $\M$, and omit $\M$ when clear.

\begin{example}
    Let $\M = \set{s \to \tup{s, 1, 0}}$. Then, for any $n, m \in \omega$, 
    \[\Reach_{\M}(\tup{s,n,m}) = \set{\tup{s, n+i, m}: i \in \omega}.\]
\end{example}

We will use the following undecidable problem in our undecidability proofs, which is called the \emph{second configuration problem} in \cite[Theorem 16.3]{Chagrov.Zakharyaschev1997}. 

\begin{theorem} \label{Thm:undec-minsky}
    There exist a Minsky machine $\M$ and a configuration $\tup{s,n,m}$ such that the reachability from $\tup{s,n,m}$ in $\M$ is undecidable, that is, the set $\Reach_{\M}(\tup{s,n,m})$ is undecidable.
\end{theorem}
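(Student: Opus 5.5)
The plan is to reduce from the halting problem for Minsky machines. This is the classical ``second configuration problem'' of Chagrov and Zakharyaschev, Theorem 16.3. The key facts are that two-counter (Minsky) machines are Turing complete and that a machine $\M$ with initial configuration $\tup{s,n,m}$ can be chosen whose reachability set is a non-recursive r.e. set.

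\textbf{Step 1: start from a universal machine.} By Minsky's theorem, every Turing machine, and in particular one whose halting set $K$ is r.e. but not recursive, can be simulated by a deterministic two-register machine. The input $x$ is encoded in a register, for instance as $2^x$; this is the standard Gödel-style encoding, and it goes through the usual passage from multi-counter to two-counter machines. So we obtain a deterministic Minsky machine $\M_0$ with start state $s_0$ and a halting state $h$ such that, for every $x$, $\M_0$ started at $\tup{s_0, f(x), 0}$ reaches $h$ if and only if $x \in K$, where $f$ is computable.

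\textbf{Step 2: fix a single initial configuration.} Reachability must be undecidable from one fixed configuration, so we build $\M$ that starts at $\tup{s,0,0}$ and dovetails over all inputs. Concretely, $\M$ generates $x = 0,1,2,\dots$ in turn and runs $\M_0$ on $f(x)$. Whenever that simulation halts, $\M$ passes through a dedicated ``output'' state $t$ with registers holding exactly $\tup{x, 0}$, encoded so as to be canonical. It then resumes.

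Running the simulations sequentially would fail, since non-halting inputs block all later ones. Instead $\M$ interleaves them in stages: at stage $k$ it runs each input $x \le k$ for $k$ steps, with all bookkeeping stored in the registers through prime-power encoding. Every state other than $t$ is entered with registers in some internal coding. The state $t$ is entered only with registers $\tup{x,0}$ for $x \in K$. This gives
\[
\tup{t,x,0} \in \Reach_{\M}(\tup{s,0,0}) \iff x \in K .
\]
Since $x \mapsto \tup{t,x,0}$ is computable, a decision procedure for $\Reach_{\M}(\tup{s,0,0})$ would decide $K$, which is a contradiction.

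\textbf{Step 3: the main obstacle.} The hard part is the bookkeeping in Step 2. We must guarantee that a configuration of the form $\tup{t,x,0}$ arises only as a genuine output, and that determinism is preserved. This is handled by making $t$ a fresh state that is entered solely from the output routine; since states are syntactic, no other route leads to it. The remaining Minsky-machine programming is routine, and I would cite \cite[Section 16]{Chagrov.Zakharyaschev1997} and \cite{minskyComputationFiniteInfinite1967} for it.
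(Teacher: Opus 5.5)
The paper gives no proof of this statement; it imports it from \cite[Theorem 16.3]{Chagrov.Zakharyaschev1997}. Your strategy of dovetailing a two-register simulation and signalling successes with a fresh state $t$ is the natural one. However, Step 2 has a genuine gap at the word ``resumes''.

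In this model a machine is deterministic and a configuration is its entire memory. So the whole run after a visit to $\tup{t,x,0}$ is a function of $x$ alone. You require $t$ to be entered with registers exactly $\tup{x,0}$. The output routine must therefore erase all the prime-power-coded bookkeeping: the stage $k$, the position within the stage, and any stored partial simulations. After that the machine has nothing from which to resume the dovetailing. Consequently the direction $x \in K \Rightarrow \tup{t,x,0} \in \Reach_{\M}(\tup{s,0,0})$ is not established: after the first announcement, the rest of the run is some fixed continuation of that single $x$.

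There is a related trap. If inputs are re-run from scratch at every stage, a halting input is detected at every later stage and announced again. A second visit to the same configuration $\tup{t,x,0}$ makes a deterministic run periodic from then on, which blocks all further announcements. Your Step 3 (``$t$ is fresh'') only handles the converse direction, that there are no spurious visits to $t$. It misidentifies the real obstacle.

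The missing idea is that the continuation after an announcement must be reconstructible from $x$ alone. One way to arrange this is as follows.
\begin{enumerate}
\item Re-run every simulation from scratch at each stage, so the only bookkeeping is the pair (stage, next input).
\item Announce $x$ only at the stage $k_x = \max(x, T(x))$ where its halting is first detected, with $T(x)$ the halting time of $\M_0$ on $f(x)$.
\item Process inputs within a stage in increasing order.
\end{enumerate}
The state of the dovetailing right after announcing $x$ is then ``stage $k_x$, next input $x+1$''. From $\tup{t,x,0}$ the machine can recompute $T(x)$ by simulating $\M_0$ on $f(x)$, which terminates because $x \in K$. It then continues from that point, using states reserved for this recovery routine. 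Alternatively, from $\tup{t,x,0}$ it can replay the dovetailing silently, carrying $x$ in the coded registers, until the unique point where $x$ is announced.

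Either way, each $x \in K$ is announced exactly once and in schedule order. You then obtain $\tup{t,x,0} \in \Reach_{\M}(\tup{s,0,0})$ iff $x \in K$, and your many-one reduction $x \mapsto \tup{t,x,0}$ goes through.
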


\section{Decidable Properties in \texorpdfstring{$\NExt{\TL{S4}}$}{NExt S4t} and \texorpdfstring{$\NExt{\TL{Grz}}$}{NExt Grzt}} \label{sec 3}

In this section, we study properties that are decidable in $\NExt{\TL{S4}}$ and $\NExt{\TL{Grz}}$. We prove that in the lattice $\NExt{\TL{S4}}$, consistency is decidable, and there are infinitely many tabular logics whose coincidence problem is decidable. It follows from Proposition~\ref{prop:finite-extension} that consistency is decidable in $\NExt{\TL{Grz}}$ as well. Moreover, we show that there is a tabular logic in $\NExt{\TL{Grz}}$ whose coincidence problem is decidable.

Let us begin with consistency in $\NExt\TL{S4}$. For each $n \in \Zp$, let $\Ch_n$ denote the $n$-chain, i.e., the frame $(\set{1, \dots, n}, \leq)$. For any logic $L = \TL{S4} \oplus \phi$, we have that $L$ is consistent if and only if $\Ch_1 \md \phi$ \cite{Kracht1992}.
Consequently, we obtain the following theorem.
\begin{theorem} \label{thm: consis S4t}
    Consistency is decidable in $\NExt\TL{S4}$.
\end{theorem}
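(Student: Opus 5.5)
The plan is to reduce the decision problem to a single finite model-checking task, using the cited fact from \cite{Kracht1992} that for every formula $\phi$ the logic $\TL{S4}\oplus\phi$ is consistent if and only if $\Ch_1\md\phi$. By Definition~\ref{def:decidable}, we must show that the set $\set{\phi : \TL{S4}\oplus\phi \text{ is consistent}}$ is decidable. By the cited fact, this set equals $\set{\phi : \Ch_1\md\phi}$, so it suffices to decide membership in the latter set.

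For completeness, I would also recall why the cited fact holds. In one direction, if $\Ch_1\md\phi$, then $\Ch_1$ is a reflexive one-point frame. It validates $\TL{S4}$ together with $\phi$, so $\Log(\Ch_1)$ is a consistent logic containing $\TL{S4}\oplus\phi$. In the other direction, suppose $\TL{S4}\oplus\phi$ is consistent. The key step is that $\Log(\Ch_1)$ is the unique maximal consistent extension of $\TL{S4}$, i.e.\ a coatom above every consistent logic. To see this, work with a rooted refined frame (or simply a frame) for a consistent extension $L$. Any such frame validates $\TL{S4}$, so it is reflexive. Moreover, the map sending every point to the single point of $\Ch_1$ is a bounded morphism for both $R$ and $\inver{R}$, because reflexivity guarantees successors and predecessors in both directions. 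Hence $\Ch_1$ validates every formula valid on such a frame, and by completeness with respect to general frames we obtain $L\sub\Log(\Ch_1)$.

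This bounded-morphism argument works for general frames too, since the preimage of any subset of $\Ch_1$ is either $\ve$ or $X$, both of which lie in $A$. Applying it to $L=\TL{S4}\oplus\phi$ gives $\phi\in\Log(\Ch_1)$, that is, $\Ch_1\md\phi$.

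Finally, checking $\Ch_1\md\phi$ is a finite computation. On $\Ch_1$, both $\B$ and $\bb$ act as the identity, since the unique point sees only itself. So we erase all modalities from $\phi$ and test whether the resulting propositional formula is a classical tautology, which is decidable. I expect no real obstacle here; the only substantive step is the coatom property of $\Log(\Ch_1)$, which the paper takes from \cite{Kracht1992}.
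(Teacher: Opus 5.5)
Your proposal is correct and takes the same route as the paper: it reduces consistency of $\TL{S4}\oplus\phi$ to $\Ch_1\md\phi$ via the fact from \cite{Kracht1992}, then uses that validity on the finite frame $\Ch_1$ is decidable. Your extra justification of that fact is sound on refined (e.g.\ descriptive) frames, but on arbitrary general frames validity of $\TL{S4}$ does not force reflexivity; what the constant map onto $\Ch_1$ actually needs is that every point has an $R$-successor and an $R$-predecessor, and this follows from $\D\top\wedge\bd\top\in\TL{S4}$ in any general frame.
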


\begin{proof}
    This follows directly from the above observation and the fact that the validity of a formula on a finite frame is decidable.
\end{proof}

Note that this contrasts the fact that consistency is undecidable in $\NExt\TL{K4}$ \cite{Chagrov.Shehtman1995,Chen.Takahashi2026}.

Next, we turn to coincidence problems. For each $n \in \Zp$, let $\Cl_{n}$ denote the $n$-cluster frame, i.e., the frame consisting of $n$ points with the full relation. We show that for all $n \in \Zp$, the coincidence problem for $\Log(\Cl_n)$ is decidable in $\NExt\TL{S4}$.

\begin{lemma}
    Let $n \in \Zp$. Then, coincidence with $\Log(\Cl_n)$ is decidable in $\NExt\TL{S4}$.
\end{lemma}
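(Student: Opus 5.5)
The plan is to reduce the question ``$\TL{S4}\oplus\phi = \Log(\Cl_n)$?'' to finitely many checks of $\phi$ on fixed finite frames. Validity on a finite frame is decidable, so this gives the algorithm. Concretely, I aim to prove that $\TL{S4}\oplus\phi=\Log(\Cl_n)$ if and only if
\[
\Cl_n\md\phi,\qquad \Cl_{n+1}\nmd\phi,\qquad \Ch_2\nmd\phi .
\]

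First I describe $\NExt\Log(\Cl_n)$. The frame $\Cl_n$ validates the symmetry axiom $p\to\B\D p$, the equality $\B p\leftrightarrow\bb p$, and the bounded-width formula $\axiom{bw}_{n-1}$, which says a cluster has at most $n$ points. On rooted refined frames, $\TL{S4}$ together with symmetry forces the frame to be a single cluster in which $R=\inver{R}$. Adding $\axiom{bw}_{n-1}$ bounds its size by $n$, and a finite differentiated cluster is a Kripke frame. So every rooted refined frame for $\Log(\Cl_n)$ is some $\Cl_k$ with $k\le n$. Each $\Cl_k$ is a p-morphic image of $\Cl_n$ (any surjection works), and $\Log(\Cl_k)\supseteq\Log(\Cl_n)$ for $k\le n$. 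Hence the consistent extensions of $\Log(\Cl_n)$ are exactly the chain $\Log(\Cl_n)\subsetneq\Log(\Cl_{n-1})\subsetneq\dots\subsetneq\Log(\Cl_1)$. This chain is strict because $\axiom{bw}_{k-1}$ separates consecutive members. It also follows that $\Log(\Cl_{n+1})$ is the unique logic of the form $\Log(\Cl_k)$ immediately below $\Log(\Cl_n)$.

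Now suppose $L=\TL{S4}\oplus\phi$ satisfies the three conditions. From $\Cl_n\md\phi$ we get $L\subseteq\Log(\Cl_n)$, so it remains to show $L\supseteq\Log(\Cl_n)$. The idea is to show that every rooted refined frame $\gf$ of $L$ is a cluster of size at most $n$. If $\gf$ is not symmetric, I want to conclude $\Ch_2\md\phi$, contradicting the third condition. If $\gf$ is a symmetric cluster with more than $n$ points, then $\Cl_{n+1}$ should be a p-morphic image of a refined cluster with at least $n+1$ points. In the finite case this is just a surjection. In the infinite case I would partition the cluster by $n+1$ pairwise disjoint nonempty admissible sets, which differentiation provides, and map each block to one point. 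This contradicts the second condition. For the converse direction, if $L=\Log(\Cl_n)$ then $\Cl_n\md\phi$ trivially, $\Cl_{n+1}\nmd\axiom{bw}_{n-1}\in L$, and $\Ch_2\nmd p\to\B\D p\in L$. So the three conditions are also necessary.

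The main obstacle is the non-symmetric case: showing that if some rooted refined $\TL{S4}$-frame $\gf=(X,R,A)$ validating $\phi$ is not symmetric, then $\Ch_2\md\phi$. In other words, I need $\Log(\Ch_2)$ to contain every logic of a non-symmetric frame. The plan is to build a tense p-morphism, restricted to admissible valuations, from $\gf$ (or a generated subframe) onto $\Ch_2$. Pick $x,y$ with $Rxy$ and not $Ryx$. By tightness there is $U\in A$ with $x\in U$ and $y\notin\inver{R}[U]$. I would try the map sending the admissible set $\inver{R}[U]$ to the bottom point $1$ of $\Ch_2$ and its complement to the top point $2$, and then verify the forth and back conditions for both $R$ and $\inver{R}$. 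Transitivity gives that $\inver{R}[U]$ is $R$-downward closed, which handles the forth conditions. The back conditions are the delicate part, since the forth condition toward $\Ch_2$ does not by itself guarantee that every point of $\inver{R}[U]$ sees a point of the complement. If this direct map fails, my fallback is to encode a Jankov-style characteristic formula of $\Ch_2$ with respect to its non-symmetry and show it is refuted whenever such $x,y$ exist. That would yield $\Ch_2\md\phi$ from $\gf\md\phi$ via the contrapositive.
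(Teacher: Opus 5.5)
Your overall strategy matches the paper's: you use the same three conditions. Your treatment of symmetric frames is also sound: a refined cluster with more than $n$ points maps onto $\Cl_{n+1}$ via a partition into $n+1$ admissible blocks. That is a correct hands-on version of the splitting of $\NExt\TL{S5}$ that the paper cites.

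The genuine gap is the non-symmetric case. What you need there is exactly Kracht's theorem that every $L\in\NExt\TL{S4}$ satisfies $L\subseteq\Log(\Ch_2)$ or $L\supseteq\TL{S5}$. The paper imports this from \cite{Kracht1992}; your proposal does not establish it.

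\begin{itemize}
\item \emph{Your specific map fails even on finite frames.} Take the reflexive frame with $xRy$ and $wRy$. Tightness allows $U=\set{x}$. Then $w\notin\inver{R}[U]$ is sent to the top point $2$, yet no point of $\inver{R}[U]$ sees $w$, so the back condition for $\inver{R}$ fails.
\item \emph{No admissible tense p-morphism onto $\Ch_2$ need exist at all.} Take points $u_i,v_i$ ($i\in\omega$). Let $R$ be the reflexive closure of $\set{(u_i,v_i),(u_{i+1},v_i) : i\in\omega}$, and let the admissible sets be the finite and cofinite subsets. This is a connected, non-symmetric refined $\TL{S4}$-frame. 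Consider $V=f^{-1}(2)$. If $V$ is finite, then for large $j$ the point $u_j$ lies outside $V$ and sees nothing in $V$. If $V$ is cofinite, some $u_j\in V$ has no predecessor outside $V$.
\item \emph{Generated subframes do not help.} The frame is connected, so in the tense setting there is no proper generated subframe to pass to.
\end{itemize}

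By Kracht's theorem, $\Ch_2$ still validates the logic of this frame. So the implication you need is true, but it cannot be obtained frame by frame via p-morphisms from rooted refined frames.

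Your Jankov-style fallback is not routine either. Jankov--Fine formulas rely on a master modality, which $\TL{S4}$ lacks; this is exactly why the paper must add $\axiom{br}_7$ before it can define $\E$ and $\A$. You must either cite Kracht's splitting result, as the paper does, or supply a genuinely different argument for it.

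There is also a smaller, fixable error. With the paper's definition, $\axiom{bw}_{n-1}$ is valid on every cluster. Once $p_j$ holds somewhere in a cluster, $\D p_j$ holds everywhere, so the consequent reduces to some $\D p_i$. Hence $\axiom{bw}_{n-1}$ does not bound cluster size. Your claims that $\Cl_{n+1}\nmd\axiom{bw}_{n-1}$ (used for necessity) and that $\axiom{bw}_{k-1}$ separates consecutive logics $\Log(\Cl_k)$ are false. Replace it with $\bigwedge_{i\le n}\D p_i\to\bigvee_{i\ne j\le n}\D(p_i\wedge p_j)$, which holds on $\Cl_k$ if and only if $k\le n$.
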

\begin{proof}
Take any $\phi\in\FormT$. Let $L_\phi \coloneqq \TL{S4}\oplus\phi$. We prove the following claim.

\begin{claim}
    $L_\phi=\Log(\Cl_n)$ iff the following holds:
    \begin{center}
        $(\dag)$  $\Ch_2\nmd\phi$, $\Cl_{n+1}\nmd\phi$, and $\Cl_n\md\phi$.
    \end{center}
\end{claim}
\begin{proof}
First, suppose that $\Ch_2\md\phi$. Then $L_\phi \sub \Log(\Ch_2)$. Recall from \cite{Kracht1992} that every logic $L\in\NExt{\TL{S4}}$ satisfies either $L\subseteq\Log(\Ch_2)$ or $L\supseteq\TL{S5}$. Thus, we obtain that $L_\phi \nsupseteq \TL{S5}$ and so $L_\phi\neq\Log(\Cl_n)$.
Suppose next that $\Cl_{n+1} \md\phi$. Then clearly, $L_\phi\sub \Log(\Cl_{n+1}) \subsetneq \Log(\Cl_{n})$. 

Finally, suppose that $\Ch_2\nmd\phi$ and $\Cl_{n+1}\nmd\phi$. Then $L_\phi \in \NExt\TL{S5}$ and $L_\phi \nsubseteq \Log(\Cl_{n+1})$. Recall from \cite[Theorem 5.4.1]{Chen2026a} that every $L\in\NExt{\TL{S5}}$ satisfies either $L\subseteq\Log(\Cl_{n+1})$ or $L\supseteq\Log(\Cl_n)$. Thus, we obtain that 
\begin{center}
    $\Cl_n \md \phi$ iff $L_\phi \sub \Log(\Cl_n)$ iff $L_\phi = \Log(\Cl_n)$.
\end{center}
This concludes our proof of the claim.
\end{proof}
It is clear that (\dag) is decidable, which concludes the proof.
\end{proof}

Consequently, we obtain the following theorem.

\begin{theorem} \label{thm:dec-grzt-=tab}
    There are infinitely many tabular tense logics $L \in \NExt \TL{S4}$ such that coincidence with $L$ is decidable in $\NExt \TL{S4}$.
\end{theorem}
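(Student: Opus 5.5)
The plan is to derive the statement directly from the preceding lemma, which shows that for every $n \in \Zp$ coincidence with $\Log(\Cl_n)$ is decidable in $\NExt\TL{S4}$. Two things remain to be checked: each $\Log(\Cl_n)$ is a tabular logic in $\NExt\TL{S4}$, and the logics $\Log(\Cl_n)$ for $n \in \Zp$ are pairwise distinct, so that there are infinitely many of them.

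For the first point, $\Cl_n$ is a finite frame, so $\Log(\Cl_n)$ is tabular by definition. Since $\Cl_n$ is reflexive and transitive, it validates $\TL{S4}$, and hence $\Log(\Cl_n) \in \NExt\TL{S4}$.

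For distinctness, I will exhibit a formula valid on $\Cl_n$ but refuted on $\Cl_{n+1}$. In a cluster the relation is total, so $\D$ acts as the universal modality. I would use the pigeonhole formula
\[\bigwedge_{i \leq n}\D p_i \to \bigvee_{i \neq j \leq n}\D(p_i \wedge p_j).\]
On $\Cl_n$ any $n+1$ nonempty sets must share a point among two of them, so the formula is valid. On $\Cl_{n+1}$, interpreting each $p_i$ as a distinct singleton refutes it. Consequently $\Log(\Cl_{n+1}) \subsetneq \Log(\Cl_n)$, and the chain of logics is strictly decreasing. Alternatively, one can invoke $\axiom{bw}_n$, but its validity on clusters depends on reading "strict anti-chain" correctly, so the explicit formula above is safer. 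The strict inclusion $\Log(\Cl_{n+1}) \subsetneq \Log(\Cl_n)$ is already used inside the lemma's proof, so the paper can simply cite it.

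Combining these observations, $\{\Log(\Cl_n) : n \in \Zp\}$ is an infinite family of tabular logics in $\NExt\TL{S4}$, and each has a decidable coincidence problem by the lemma. I expect no real obstacle. The only nontrivial input is the strictness of the inclusions, which is a routine pigeonhole check.
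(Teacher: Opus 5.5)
Your proposal is correct and follows the paper's route: the paper also derives the theorem directly from the preceding lemma on $\Log(\Cl_n)$, relying implicitly on the strict chain $\Log(\Cl_{n+1}) \subsetneq \Log(\Cl_n)$ stated inside that lemma's proof. Your explicit pigeonhole check is a sound addition; the formula indexed by $m$ is refuted on every $\Cl_{m'}$ with $m' > m$, so it gives pairwise distinctness even without the inclusion, and you were right to avoid $\axiom{bw}_n$, which is valid on every cluster and so cannot separate them.
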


Now we move to the case for $\TL{Grz}$. The following is a direct consequence of Theorem~\ref{thm: consis S4t} and Proposition~\ref{prop:finite-extension}.

\begin{theorem}
    Consistency is decidable in $\NExt\TL{Grz}$.
\end{theorem}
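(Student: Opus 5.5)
The statement follows by combining two earlier results, and I would not introduce any new machinery. The plan is to take a finitely axiomatized extension $\TL{Grz}\oplus\psi$ and decide its consistency with the algorithm already available for $\NExt\TL{S4}$.

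First I check that $\TL{Grz}$ is a finitely axiomatized extension of $\TL{S4}$. By definition $\TL{Grz} = \TL{K}\oplus\axiom{grz}\oplus\axiom{grz}^\partial$. It contains $\TL{S4}$, as noted after Proposition~\ref{prop:grz}: reflexivity and transitivity follow syntactically from $\axiom{grz}$, which is the standard fact that $\ML{Grz}\supseteq\ML{S4}$, and the same holds for the past fragment via $\axiom{grz}^\partial$. Hence
\[
\TL{Grz} = \TL{S4}\oplus(\axiom{grz}\wedge\axiom{grz}^\partial),
\]
which is $\TL{S4}$ extended by a single formula.

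Next I apply Proposition~\ref{prop:finite-extension} in contrapositive form, with $L=\TL{S4}$ and $L'=\TL{Grz}$. Theorem~\ref{thm: consis S4t} says consistency is decidable in $\NExt\TL{S4}$, so consistency is decidable in $\NExt\TL{Grz}$. Concretely, given $\psi$, we decide whether $\TL{S4}\oplus(\axiom{grz}\wedge\axiom{grz}^\partial\wedge\psi)$ is consistent. By the observation from \cite{Kracht1992} preceding Theorem~\ref{thm: consis S4t}, this amounts to checking whether $\Ch_1\md\axiom{grz}\wedge\axiom{grz}^\partial\wedge\psi$. Since $\Ch_1$ validates $\axiom{grz}$ and $\axiom{grz}^\partial$, the test reduces simply to checking $\Ch_1\md\psi$, which is decidable.

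The only point needing care is the contrapositive use of Proposition~\ref{prop:finite-extension}, which is stated for undecidability; the forward direction of its proof gives exactly the decidability transfer we need. I expect no real obstacle.
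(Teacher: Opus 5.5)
Your proposal is correct and matches the paper's argument: the paper derives this theorem as a direct consequence of Theorem~\ref{thm: consis S4t} and Proposition~\ref{prop:finite-extension}, and that proposition's proof is exactly the contrapositive decidability transfer you use. Your concrete unpacking of the test as checking $\Ch_1\md\psi$ is also correct, since $\Ch_1$ validates both $\axiom{grz}$ and $\axiom{grz}^\partial$.
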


However, the proof of Theorem~\ref{thm:dec-grzt-=tab} does not work for $\NExt\TL{Grz}$. For $n>1$, the logic $\Log(\Cl_n)$ is no longer an extension of $\TL{Grz}$ since $\Cl_n$ is not antisymmetric for $n > 1$ (see Proposition~\ref{prop:grz}). In $\NExt\TL{Grz}$, however, we can still find a tabular logic with a decidable coincidence problem. To prove this, we need the following lemma:

\begin{lemma}\label{lem:splitting-grzt}
    For all $L \in \NExt\TL{Grz}$, either $L \supseteq \Log(\Cl_1)$ or $L \subseteq \Log(\Cl_2)$.
\end{lemma}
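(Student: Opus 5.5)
The plan is to read the lemma as a splitting-type dichotomy: take $L\in\NExt\TL{Grz}$ with $L\nsubseteq\Log(\Cl_2)$, and show that $L$ proves the trivializing axiom $p\leftrightarrow\B p$, so that $L\supseteq\Log(\Cl_1)$. Before carrying this out I would test the literal statement on the base logic $\TL{Grz}$ itself. I expect this check to fail, and that failure is the main obstacle.

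\textbf{Consistency check.} By Proposition~\ref{prop:grz}, $\Cl_2\nmd\axiom{grz}$, because $\Cl_2$ is not antisymmetric. Hence $\axiom{grz}\in\TL{Grz}\setminus\Log(\Cl_2)$, and therefore $L\nsubseteq\Log(\Cl_2)$ holds for \emph{every} $L\in\NExt\TL{Grz}$. The second disjunct of the lemma can thus never hold, and the lemma would force $\TL{Grz}\supseteq\Log(\Cl_1)$. But $\Ch_2$ is a finite partial order, so it is antisymmetric and Noetherian in both directions. By Proposition~\ref{prop:grz} it validates $\TL{Grz}$. It refutes $p\to\B p\in\Log(\Cl_1)$: put $p$ true at $1$ only. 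So $\TL{Grz}\nsupseteq\Log(\Cl_1)$, and under my reading of $\Cl_2$ as the two-point cluster the statement as worded fails already for $L=\TL{Grz}$. I therefore do not see how to prove it in this form. I would first check whether the intended frame in the second disjunct is a different one; the definition of $\Cl_n$ just above makes a typo for some $\TL{Grz}$-frame the most likely explanation.

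\textbf{Argument for a corrected disjunct.} If the second disjunct is replaced by $L\subseteq\Log(\gf_0)$ for a suitable two-point frame $\gf_0$ that validates $\TL{Grz}$, the argument would run as follows. Suppose $L\nsubseteq\Log(\gf_0)$, pick $\phi\in L$ with $\gf_0\nmd\phi$, and aim to show $L\vdash p\to\B p$.

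\begin{enumerate}
\item Assume otherwise. Use the refined canonical frame, or descriptive duality, to get a rooted refined $\TL{Grz}$-frame $\gf\md L$ that refutes $p\to\B p$. Such a frame contains distinct points $x,y$ with $Rxy$.
\item Build a valuation on $\gf$ that separates $x$ from $y$, using the tightness and differentiation conditions of refined frames.
\item Use this valuation to pull back the refutation of $\phi$ on $\gf_0$ to a refutation of $\phi$ on $\gf$, in the style of a generated-subframe or p-morphism argument. This contradicts $\gf\md L$.
\end{enumerate}

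The step I expect to be hardest is step 3. There I would first try to define a map from $\gf$ onto $\gf_0$ and verify the back conditions for both $\B$ and $\bb$. This requires showing that every point has suitable $R$-successors and $R$-predecessors in the right classes. I would try to get this from the Noetherian properties of $R$ and of $\inver{R}$ given by Proposition~\ref{prop:grz}, which yield maximal and minimal points in the relevant admissible sets.
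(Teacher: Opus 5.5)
Your consistency check is right, and it is the key observation. With $\Cl_2$ the two-point cluster, $\axiom{grz}\in\TL{Grz}\setminus\Log(\Cl_2)$, so the second disjunct never holds. Meanwhile $\Ch_2$ validates $\TL{Grz}$ and refutes $p\to\B p\in\Log(\Cl_1)$. So the lemma as printed is false already for $L=\TL{Grz}$.

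The intended frame is the two-element chain $\Ch_2$, and the paper's proof is really a proof of that version. It appeals to Kracht's dichotomy for $\NExt\TL{S4}$. The paper states this dichotomy correctly earlier, in the proof that coincidence with $\Log(\Cl_n)$ is decidable, as ``$L\subseteq\Log(\Ch_2)$ or $L\supseteq\TL{S5}$'', and then miscopies it here with $\Cl_2$. The application in Theorem~\ref{thm:dec-grzt-=Cl1} likewise tests $\Ch_2\md\phi$. With this correction the whole proof is two lines: by Kracht, $L\subseteq\Log(\Ch_2)$ or $L\supseteq\TL{S5}$, and in the second case $L\supseteq\TL{S5}\oplus\axiom{grz}\oplus\axiom{grz}^\partial=\Log(\Cl_1)$. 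So fix $\gf_0=\Ch_2$ and cite the splitting.

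Your argument for the corrected version instead tries to reprove this splitting, and step 3 as planned would fail. There are three problems.

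\textbf{Noetherianity is not available.} Proposition~\ref{prop:grz} concerns Kripke frames, and a refined $\TL{Grz}$-frame need not be Noetherian. The paper's frame from Section~\ref{subsec-1} validates $\axiom{grz}^\partial$ (Lemma~\ref{lem:S4t-gf-Ltkl}), although $a_0,a_1,a_2,\dots$ is an infinite strictly ascending $\inver{R}$-chain.

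\textbf{You cannot fall back on Kripke frames.} On Kripke frames, sending the maximal points to the top of $\Ch_2$ does give a p-morphism. But extensions of $\TL{Grz}$ can be Kripke incomplete (Lemma~\ref{lem:S4t-Ltkl-incomplete}), so this does not suffice.

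\textbf{A p-morphism onto $\Ch_2$ need not exist at all.} Consider the following frame $Z$:
\begin{itemize}
\item points $n_i,m_i$ for $i\in\omega$;
\item $R$ the reflexive closure of $\set{(n_i,m_i),(n_i,m_{i+1}):i\in\omega}$;
\item admissible sets the finite and cofinite subsets.
\end{itemize}
This is a connected refined frame. Its underlying Kripke frame is a partial order of depth $2$, so it validates $\TL{Grz}$, and $V(p)=\set{n_0}$ refutes $p\to\B p$. A p-morphism onto $\Ch_2$ needs an admissible upset $U$ such that every point outside $U$ sees a point of $U$, and every point of $U$ is seen from outside $U$. The second condition excludes every minimal point $n_i$ from $U$. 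The first forces $U$ to meet each $\set{m_i,m_{i+1}}$. So $U$ would be an infinite, coinfinite set of $m_i$'s, which is not admissible. Since $Z$ is connected, passing to generated subframes does not help.

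The inclusion $\Log(Z)\subseteq\Log(\Ch_2)$ does hold, but it must be witnessed by a valuation chosen depending on $\phi$. For example, take $U=\set{m_0,\dots,m_K}$ with $K$ exceeding the modal depth of $\phi$, so that the p-morphism conditions hold on the relevant $\R$-ball. Establishing $\Log(\gf)\subseteq\Log(\Ch_2)$ for an arbitrary rooted refined $\TL{Grz}$-frame $\gf$ refuting $p\to\B p$ is the real content of the splitting theorem, and your step 3 leaves it open.
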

\begin{proof}
    Take any $L \in \NExt\TL{Grz}$. Note that $\TL{S4} \subseteq \TL{Grz} \subseteq L$. By \cite{Kracht1992}, we see that either $L \sub \Log(\Cl_2)$ or $L \supseteq \TL{S5}$. If $L \supseteq \TL{S5}$, then $L = L\oplus\axiom{grz}\oplus\axiom{grz}^\partial \supseteq \TL{S5}\oplus\axiom{grz}\oplus\axiom{grz}^\partial = \Log(\Cl_1)$, which concludes the proof.
\end{proof}

\begin{theorem} \label{thm:dec-grzt-=Cl1}
    Coincidence with $\Log(\Cl_1)$ is decidable in $\NExt\TL{Grz}$.
\end{theorem}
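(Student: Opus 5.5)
Following the pattern of the coincidence lemma for $\Log(\Cl_n)$ in $\NExt\TL{S4}$, we reduce the question to finitely many checks of validity on finite frames. For a formula $\phi$, put $L_\phi \coloneqq \TL{Grz}\oplus\phi$. The goal is the following characterization:
\begin{center}
    $L_\phi=\Log(\Cl_1)$ iff $\Cl_2\nmd\phi$ and $\Cl_1\md\phi$.
\end{center}
Each condition is validity of a formula on a finite frame, hence decidable. So this equivalence gives the theorem.

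For the left-to-right direction, assume $L_\phi=\Log(\Cl_1)$. Then $\phi\in\Log(\Cl_1)$, which gives $\Cl_1\md\phi$. Next, $\Log(\Cl_1)\not\subseteq\Log(\Cl_2)$: the formula $p\to\B p$ is valid on $\Cl_1$ but fails on $\Cl_2$ (make $p$ true at exactly one of the two points). We also have $\TL{Grz}\subseteq\Log(\Cl_2)$, because $\Cl_2$ is reflexive and transitive, so $\axiom{grz}$ fails there only by antisymmetry. Hence $\TL{Grz}\oplus\phi\not\subseteq\Log(\Cl_2)$, which forces $\phi\notin\Log(\Cl_2)$, i.e.\ $\Cl_2\nmd\phi$. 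I stress that this direction must be argued from $\Cl_2\md\phi \Rightarrow L_\phi\subseteq\Log(\Cl_2)$, and not from validity of $\TL{Grz}$ on $\Cl_2$, because $\Cl_2$ is \emph{not} a $\TL{Grz}$-frame. The precise statement needed is: if $\Cl_2\md\phi$ then $L_\phi\subseteq\Log(\Cl_2)$. That requires $\Cl_2\md\TL{Grz}$, which is false.

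This is the main subtlety, so the plan is to rely on Lemma~\ref{lem:splitting-grzt} instead. That lemma says every $L\in\NExt\TL{Grz}$ satisfies $L\supseteq\Log(\Cl_1)$ or $L\subseteq\Log(\Cl_2)$. Since $\Log(\Cl_1)\not\subseteq\Log(\Cl_2)$, the two alternatives exclude each other. Consequently, $L_\phi\supseteq\Log(\Cl_1)$ holds iff $L_\phi\not\subseteq\Log(\Cl_2)$. The right-hand side should be witnessed by some formula of $L_\phi$ failing on $\Cl_2$, and I would like to show this is equivalent to $\Cl_2\nmd\phi$. Because $\axiom{grz}$ fails on $\Cl_2$ anyway, the finite test has to be chosen with care. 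The repair is to test $\phi$ not alone but in a form that is checkable on $\Cl_2$.

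The cleanest route I would try is the direct one. $\Log(\Cl_1)$ contains $p\leftrightarrow\B p$ and $p\leftrightarrow \bb p$, and it is axiomatized over $\TL{Grz}$ by $p\to\B p$. Hence $L_\phi\supseteq\Log(\Cl_1)$ iff $p\to\B p\in L_\phi$. By the dichotomy, this holds iff $L_\phi\not\subseteq\Log(\Cl_2)$. Because $\Log(\Cl_2)\supseteq\TL{S5}$ contains everything of $\TL{Grz}$ apart from the Grz axioms, I would instead apply the lemma to compare with $\TL{S5}$, using Kracht's dichotomy \cite{Kracht1992} together with $\Ch_2$. Namely, $L_\phi\supseteq\TL{S5}$ iff $\Ch_2\nmd\phi$, since $\Ch_2$ \emph{is} a $\TL{Grz}$-frame. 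In that case $L_\phi\supseteq\Log(\Cl_1)$ exactly as in the proof of Lemma~\ref{lem:splitting-grzt}, and then $L_\phi=\Log(\Cl_1)$ iff $\Cl_1\md\phi$, because $\Log(\Cl_1)$ is maximal consistent. The corrected criterion is therefore: $L_\phi=\Log(\Cl_1)$ iff $\Ch_2\nmd\phi$ and $\Cl_1\md\phi$. Both conditions are decidable, which completes the plan.
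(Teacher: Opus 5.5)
Your corrected criterion, $L_\phi=\Log(\Cl_1)$ iff $\Ch_2\nmd\phi$ and $\Cl_1\md\phi$, is exactly the paper's, and so is your justification: Kracht's dichotomy ($L_\phi\subseteq\Log(\Ch_2)$ or $L_\phi\supseteq\TL{S5}$, where $\Ch_2$ is a $\TL{Grz}$-frame) together with $\TL{S5}\oplus\axiom{grz}\oplus\axiom{grz}^\partial=\Log(\Cl_1)$; the paper's $\Log(\Cl_2)$ in Lemma~\ref{lem:splitting-grzt} and in this proof is evidently a slip for $\Log(\Ch_2)$. Delete the earlier $\Cl_2$-based material entirely: $\TL{Grz}\subseteq\Log(\Cl_2)$ is false, and that first criterion genuinely fails, since $\phi=\axiom{grz}$ satisfies $\Cl_2\nmd\phi$ and $\Cl_1\md\phi$ yet $L_\phi=\TL{Grz}\neq\Log(\Cl_1)$.
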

\begin{proof}
Take any $\phi\in\FormT$. Let $L_\phi \coloneqq \TL{Grz}\oplus\phi$. We prove the following claim.

\begin{claim}
    $L_\phi=\Log(\Cl_1)$ iff the following holds:
    \begin{center}
        $(\dag)$  $\Ch_2\nmd\phi$ and $\Cl_1\md\phi$.
    \end{center}
\end{claim}
\begin{proof}
Suppose that $\Ch_2\md\phi$. Then clearly we have $L_\phi\sub \Log(\Cl_{2}) \subsetneq \Log(\Cl_{1})$, so $L_\phi\neq\Log(\Cl_1)$. Suppose that $\Ch_2\nmd\phi$. Then $L_\phi \nsubseteq \Log(\Cl_{2})$. By Lemma~\ref{lem:splitting-grzt}, $L_\phi \supseteq \Log(\Cl_1)$. Thus,
\begin{center}
    $\Cl_1 \md \phi$ iff $L_\phi \sub \Log(\Cl_1)$ iff $L_\phi = \Log(\Cl_1)$.
\end{center}
This concludes our proof of the claim.
\end{proof}
It is clear that (\dag) is decidable, which concludes the proof.
\end{proof}

\begin{remark}
    The above proofs use the idea of \emph{splittings}. We refer to \cite{Chen2026a,Kracht1992} for details of the notion of splittings and its applications in tense logics. For example, it was shown that coincidence with any \emph{iterated splitting} is decidable in $\NExt\TL{S4}$ \cite[Theorem 6.3.2]{Chen2026a}.
\end{remark}

\section{Undecidable Properties in \texorpdfstring{$\NExt\TL{Grz}$}{NExt Grzt}} \label{sec:S4t--chap:Undecidability} 

The aim of this section is to study undecidable properties in $\NExt\TL{Grz}$. Following Chagrov's method \cite{chagrov1990I, chagrov1990II, Chagrov.Zakharyaschev1993, Chagrov.Chagrova1995, Chagrov2002}, we reduce the undecidable problem about Minsky machines in Theorem~\ref{Thm:undec-minsky} to the decision problem of properties in $\NExt\TL{Grz}$. Although this method was recently applied to $\NExt{\TL{K4}}$ \cite{Chen.Takahashi2026}, adapting it to $\NExt\TL{Grz}$ is not straightforward: the main difficulty is that frames are now reflexive and thus we cannot use variable-free formulas to define subsets uniformly for all valuations, which makes it harder to simulate Minsky machines.

To overcome this difficulty, we isolate a trick that has been used in the literature but has remained largely implicit in proofs, and formulate it explicitly as the method of \emph{good valuations} (see Definition~\ref{def-good-val}). The use of good valuations is not limited to undecidability proofs based on Chagrov's method, such as \cite{Chagrov.Zakharyaschev1993}. The method also appears in Fine's proof of a Kripke incomplete logic above $\ML{S4}$ \cite{Fine1974IncompleteLogicContainingS4}, and in the proof of Blok's dichotomy theorem (originally proved in \cite{Blok1978degree}) given in \cite[Theorem 10.59]{Chagrov.Zakharyaschev1997}. Note that the part of the latter proof requiring good valuations is omitted, but the underlying method is described in \cite[Example 10.58]{Chagrov.Zakharyaschev1997}.

\subsection{Main theorems}\label{subsec:main theorems}

First, we state the two main theorems and discuss their implications. We only summarize the proof ideas at the end of this subsection, postponing the detailed proofs to the subsequent subsections.
The first main theorem provides a general criterion for a property to be undecidable in $\NExt \TL{Grz}$.

\begin{theorem}[Main Theorem 1] \label{thm:undec-scheme-grzt}
    Let $P$ be a property satisfying the following two conditions. Then, $P$ is undecidable in $\NExt \TL{Grz}$.
    \begin{enumerate}
        \item The logic $\TL{Grz} \oplus \set{\axiom{bd}_2, \axiom{bw}_2, \axiom{bw}^\partial_5,\axiom{br}_7}$ has $P$,
        \item For any logic $L \in \NExt \TL{Grz}$, if $L$ has $P$, then $L$ is Kripke complete or decidable.
    \end{enumerate}
\end{theorem}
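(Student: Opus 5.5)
We reduce the second configuration problem (Theorem~\ref{Thm:undec-minsky}) to $P$, following Chagrov's scheme as adapted in \cite{Chen.Takahashi2026}. Fix $\M$ and $\tup{s,n,m}$ with $\Reach_{\M}(\tup{s,n,m})$ undecidable. For every configuration $c=\tup{t,k,l}$ we will effectively construct a formula $\psi_c$ such that
\begin{itemize}
    \item if $c\notin\Reach(\tup{s,n,m})$, then $L_c \coloneqq \TL{Grz}\oplus\set{\axiom{bd}_2,\axiom{bw}_2,\axiom{bw}^\partial_5,\axiom{br}_7}\oplus\psi_c$ coincides with $L_0 \coloneqq \TL{Grz}\oplus\set{\axiom{bd}_2,\axiom{bw}_2,\axiom{bw}^\partial_5,\axiom{br}_7}$;
    \item if $c\in\Reach(\tup{s,n,m})$, then $L_c' \coloneqq \TL{Grz}\oplus\psi_c$ is neither Kripke complete nor decidable.
\end{itemize}
Then $c\mapsto \TL{Grz}\oplus(\psi_c\wedge\text{the bounding axioms})$ is not quite uniform, so we instead arrange a single formula $\Phi_c$ that, in the unreachable case, axiomatizes $L_0$, and in the reachable case yields a logic that is neither Kripke complete nor decidable. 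Condition (1) gives $P$ in the first case, and condition (2) excludes $P$ in the second. Hence deciding $P$ would decide reachability.

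For the construction, we build a (non-finite) refined $\TL{Grz}$-frame $\gf$ of depth $2$ encoding runs of $\M$. It has the following ingredients:
\begin{itemize}
    \item a backbone of maximal points $a_i,b_i$ ($i\in\omega$) coding the natural numbers;
    \item minimal points coding configurations, each seeing $a_i$-type points to encode register values;
    \item auxiliary "marker" points with small bounded in/out-degrees, which keep $\axiom{bw}_2$, $\axiom{bw}^\partial_5$ and $\axiom{br}_7$ valid.
\end{itemize}
The admissible sets are the finite and cofinite sets, or a similar algebra that makes $\gf$ refined but not Kripke-complete-equivalent.

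Following the paper's announced strategy, we then write formulas:
\begin{itemize}
    \item $\alpha_c$, describing the point coding configuration $c$;
    \item $\mathrm{AX}_{\M}$, describing the instruction transitions (implication from a configuration point to the existence of its successor configuration), using the $n$-step modalities $\is{k}$;
    \item the key formula $\phi^*$, which forces "good valuations".
\end{itemize}
The good-valuation method works as follows. We prove that for every valuation $V$ with $V(\phi^*)\neq\ve$, the variables of the simulation formulas define specific points of $\gf$ uniformly, so truth of $\alpha_c$ mirrors reachability. For valuations with $V(\phi^*)=\ve$, the axiom $\phi^*\to\dots$ holds trivially. We set $\Phi_c \coloneqq \axiom{bd}_2\wedge\axiom{bw}_2\wedge\axiom{bw}^\partial_5\wedge\axiom{br}_7\wedge(\text{``}\phi^*\wedge\mathrm{AX}_{\M}\text{-premise''}\to\neg\alpha_c)$. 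This should give two facts:
\begin{itemize}
    \item if $c$ is unreachable, then $\Phi_c$ is valid on all finite frames of $L_0$; since $L_0$ is tabular by Theorem~\ref{thm:grz+bp} and has the FMP, it follows that $\Phi_c$ axiomatizes $L_0$;
    \item if $c$ is reachable, then $\gf\md\Phi_c$, while some formula true on $\gf$ fails to be derivable from Kripke frames of $L_c$, showing incompleteness.
\end{itemize}
Undecidability in the reachable case follows by varying $c'$: membership of $\neg\alpha_{c'}$-type formulas in the logic again encodes reachability.

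The main obstacle is designing $\phi^*$ and the frame so that good valuations pin down points using variables rather than constants, while the whole frame stays within depth $2$ and the width and radius bounds. Subframe formulas are unavailable in the tense setting, so I would first try to let $\phi^*$ assert a local pattern around a root of $\gf$, namely a point together with a specific number of $\bd$- and $\D$-neighbours realizing distinct variable patterns. I would then show that any point satisfying $\phi^*$ is forced to be the unique such pattern, using antisymmetry and Noetherianity from Proposition~\ref{prop:grz}. Verifying that finite $L_0$-frames validate $\Phi_c$ in the unreachable case is the other delicate step.
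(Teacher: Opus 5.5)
Your proposal has a genuine gap, and it is structural. Your $\Phi_c$ contains $\axiom{bd}_2\wedge\axiom{bw}_2\wedge\axiom{bw}^\partial_5\wedge\axiom{br}_7$ as conjuncts. So $\TL{Grz}\oplus\Phi_c\supseteq L_0$ for every $c$, and by Theorem~\ref{thm:grz+bp} this logic is tabular, hence Kripke complete and decidable, whatever $c$ is. The case in which $P$ should fail can therefore never arise. For the same reason, a depth-$2$ frame $\gf$ validating $\Phi_c$ validates $L_0$ and cannot witness incompleteness of anything.

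The missing idea is that the collapse to $L_0$ must be \emph{conditional} and tied to $\phi^*$ itself. The paper takes $\phi^*\coloneqq\neg\axiom{bd}_2\vee\neg\axiom{bw}_2$, so that $\neg\phi^*$ is exactly what separates $L_0$ from the base $\TL{Grz}\oplus\axiom{bw}^\partial_5\oplus\axiom{br}_7$. It then adds axiom (II), $(\phi^*\wedge\E\sigma(s,\overline{\psi}(n),\overline{\psi}(m))\to\phi^*\wedge\E\sigma(t,\overline{\psi}(k),\overline{\psi}(l)))\to\neg\phi^*$. The base contains only axioms the witness frame validates. That frame has infinite depth and refutes $\axiom{bd}_2$ (e.g.\ at $a_2$), which is exactly why good valuations exist. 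Your plan to let $\phi^*$ pin down a local pattern around a root does not connect $\phi^*$ to $L_0$ at all.

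The direction of your reduction is not merely swapped; it is impossible. The set $\set{\phi:\TL{Grz}\oplus\phi=L_0}$ is r.e.: enumerate proofs of $\phi$ in the finitely axiomatized $L_0$, and proofs of the four bounding axioms in $\TL{Grz}\oplus\phi$. So a computable $c\mapsto\Phi_c$ with $\TL{Grz}\oplus\Phi_c=L_0$ exactly when $c\notin\Reach$ would make the complement of the r.e.\ set $\Reach$ r.e., hence $\Reach$ decidable. In the paper, reachability is what triggers the collapse. For $c\in\Reach$, Lemma~\ref{lem:S4t-syntax-main} derives the antecedent of (II) from $AxM$, so $\neg\phi^*\in L(t,k,l)$ and $L(t,k,l)=L_0$. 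For $c\notin\Reach$, $\gf\md L(t,k,l)$ (Lemma~\ref{lem:S4t-gf-Ltkl}), because $\gf$ has configuration points only for reachable configurations.

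You also give no mechanism for incompleteness; your phrase about a formula ``true on $\gf$'' not being ``derivable from Kripke frames'' has no candidate formula behind it. The paper's axioms (III) and (IV) force two infinite descending chains in any Kripke frame of $L(t,k,l)$ where $\phi^*$ is satisfiable, contradicting $\axiom{grz}^\partial$ (Proposition~\ref{prop:grz}). Hence $\neg\phi^*$ is valid on all Kripke frames of $L(t,k,l)$. Yet $\gf$ refutes $\neg\phi^*$, while still validating $\axiom{grz}^\partial$ because its admissible sets are finite or cofinite on $AB$. Your idea for undecidability, encoding reachability by membership of start-implies-target formulas, does match the paper, but only once the reduction runs in the correct direction.
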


Note that the logic $\TL{Grz} \oplus \set{\axiom{bd}_2, \axiom{bw}_2, \axiom{bw}^\partial_5,\axiom{br}_7}$ is tabular by Theorem~\ref{thm:grz+bp}. Thus, condition (1) is satisfied for any property $P$ that holds for all tabular logics. In fact, most properties one studies for logics fall into the scope of this theorem, as shown in the following corollary.

\begin{corollary} \label{cor:undec-prop-grzt}
    The following properties are undecidable in $\NExt \TL{Grz}$:
    \begin{enumerate}
        \item tabularity,
        \item local tabularity,
        \item the FMP,
        \item decidability,
        \item elementarity,
        \item canonicity,
        \item Kripke completeness.
    \end{enumerate}
\end{corollary}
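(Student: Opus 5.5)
The plan is to derive the corollary from Theorem~\ref{thm:undec-scheme-grzt} by checking, for each listed property $P$, its two conditions. Set $L^\ast \coloneqq \TL{Grz} \oplus \set{\axiom{bd}_2, \axiom{bw}_2, \axiom{bw}^\partial_5,\axiom{br}_7}$. By Theorem~\ref{thm:grz+bp}, $L^\ast$ is tabular, say $L^\ast = \Log(\ff)$ with $\ff$ finite.

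\textbf{Condition (1).} A tabular logic has each of the seven properties, for the following reasons.
\begin{itemize}
\item $L^\ast$ is tabular by construction.
\item It is locally tabular, since $\Log(\ff)$ has at most $2^{2^{|\ff|\cdot n}}$ pairwise non-equivalent formulas in $n$ variables: formulas are determined by their valuations' behaviour on the finite frame.
\item It has the FMP and is Kripke complete, being the logic of a finite frame.
\item It is decidable, because validity on a finite frame is decidable.
\item It is elementary, since the class of frames validating $\Log(\ff)$ is elementary. Here I would use the standard fact that a finite frame's logic is characterised by the first-order class of frames having no "large" configurations.
\item It is canonical; this is the standard fact that tabular logics are canonical.
\end{itemize}
The elementarity and canonicity claims are the ones needing a citation rather than a one-line check. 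For both I would invoke the known result that logics of finite (indeed locally tabular) frames are canonical and elementary, e.g.\ via local tabularity: every locally tabular logic is canonical, and a tabular logic $\Log(\ff)$ equals the logic of the first-order definable class of frames $\mathfrak{G}$ all of whose finitely generated subframes are p-morphic images of subframes of $\ff$. If that is cumbersome, I would instead observe that $L^\ast$ is in fact the logic of a single finite frame, and that the finite frame's Jankov-type characterization gives a first-order class.

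\textbf{Condition (2).} This holds via the following implications.
\begin{itemize}
\item Tabularity, the FMP, canonicity and elementarity each imply Kripke completeness. For canonicity, the canonical frame is a Kripke frame validating $L$; for elementarity, $L$ is the logic of a class of Kripke frames.
\item Local tabularity implies the FMP, hence Kripke completeness.
\item Kripke completeness and decidability satisfy (2) trivially.
\end{itemize}
Applying Theorem~\ref{thm:undec-scheme-grzt} to each property then yields the corollary. The only potential obstacle is pinning down references for canonicity and elementarity of tabular tense logics, which I would cite from the standard literature, e.g.\ \cite{Chagrov.Zakharyaschev1997,Kracht1999l}.
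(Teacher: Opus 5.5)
Your proposal is correct and follows the paper's argument exactly: condition (1) holds because $\TL{Grz} \oplus \set{\axiom{bd}_2, \axiom{bw}_2, \axiom{bw}^\partial_5,\axiom{br}_7}$ is tabular by Theorem~\ref{thm:grz+bp}, and condition (2) holds because item (4) is decidability itself while every other listed property implies Kripke completeness.

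One correction to your side remarks. Elementarity of $\Log(\ff)$ is immediate, because the class of frames isomorphic to the finite frame $\ff$ is defined by a single first-order sentence; canonicity then follows from Fine's theorem that the logic of an elementary class is canonical. By contrast, your description via ``p-morphic images of subframes of $\ff$'' is wrong as stated, since arbitrary subframes of $\ff$ need not validate $\Log(\ff)$ (you need generated subframes). Also, the claim that every locally tabular logic is canonical is much stronger than you need and should not be invoked without a precise source.
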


\begin{proof}
    We check the two conditions in Theorem~\ref{thm:undec-scheme-grzt} for the listed properties. For the second condition, it is clear that (4) implies decidability, and all other properties imply Kripke completeness. For the first condition, it suffices to observe that, by Theorem~\ref{thm:grz+bp}, the logic $\TL{Grz} \oplus \set{\axiom{bd}_2, \axiom{bw}_2, \axiom{bw}^\partial_5,\axiom{br}_7}$ is tabular. So, it satisfies all the listed properties. Thus, by Theorem~\ref{thm:undec-scheme-grzt}, we conclude that these properties are undecidable in $\NExt \TL{Grz}$.
\end{proof}

It follows from Proposition~\ref{prop:finite-extension} that these properties are undecidable in any lattice of extensions of a base logic over which $\TL{Grz}$ is finitely axiomatizable. In particular, they are undecidable in $\NExt \TL{S4}$, $\NExt \TL{K4}$, and $\NExt \TL{K}$, generalizing the undecidability results in  \cite{Chagrov.Shehtman1995, Chen.Takahashi2026}.

\begin{corollary} \label{cor:undec-prop-S4t}
    The properties listed in Corollary~\ref{cor:undec-prop-grzt} are undecidable in the lattices $\NExt \TL{S4}$, $\NExt \TL{K4}$, and $\NExt \TL{K}$. 
\end{corollary}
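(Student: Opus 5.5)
This follows directly from Corollary~\ref{cor:undec-prop-grzt} and Proposition~\ref{prop:finite-extension}, applied once to each of the three base logics.

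First I check that $\TL{Grz}$ is a finitely axiomatizable extension of each base logic. By definition $\TL{Grz} = \TL{K} \oplus \axiom{grz} \oplus \axiom{grz}^\partial$, so it extends $\TL{K}$ by two axioms. I then verify that $\TL{Grz} \supseteq \TL{S4}$. Every $\TL{Grz}$-frame is reflexive and transitive by Proposition~\ref{prop:grz}, but frame validity alone gives this only for Kripke complete logics. So I use the syntactic facts that $\axiom{grz}$ derives $\B p \to p$ and $\B p \to \B\B p$ over $\ML{K}$ (see \cite{Chagrov.Zakharyaschev1997}), and that the same holds for $\axiom{grz}^\partial$ with $\bb$. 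Since $\TL{S4} = \TL{K} \oplus \{\B p\to p, \B p \to \B\B p\}$, the containments $\TL{K} \subseteq \TL{K4} \subseteq \TL{S4} \subseteq \TL{Grz}$ follow. Because the axioms added on top of $\TL{K}$ already give $\TL{Grz}$, we also have
\[
\TL{Grz} = \TL{S4} \oplus \axiom{grz} \oplus \axiom{grz}^\partial = \TL{K4} \oplus \axiom{grz} \oplus \axiom{grz}^\partial = \TL{K} \oplus \axiom{grz} \oplus \axiom{grz}^\partial .
\]
Each of these presents $\TL{Grz}$ as an extension of the respective base by finitely many axioms, equivalently by the single conjunction $\axiom{grz}\wedge\axiom{grz}^\partial$.

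Next, I take each property $P$ from Corollary~\ref{cor:undec-prop-grzt}; that corollary says $P$ is undecidable in $\NExt\TL{Grz}$. I apply Proposition~\ref{prop:finite-extension} with $L' = \TL{Grz}$ and $L$ ranging over $\TL{S4}$, $\TL{K4}$, and $\TL{K}$. This gives that $P$ is undecidable in $\NExt L$ in each case.

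One point needs care. The property is read as a set of logics, intersected with the relevant lattice. Since $\NExt\TL{Grz} \subseteq \NExt L$, the restriction of $P$ to $\NExt\TL{Grz}$ is the same property, so the proof of Proposition~\ref{prop:finite-extension} carries over verbatim. There is no real obstacle here; the only nontrivial ingredient is the standard derivability of $\axiom{t}$ and $\axiom{4}$ from $\axiom{grz}$, which justifies $\TL{S4} \subseteq \TL{Grz}$.
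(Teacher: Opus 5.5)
Your proposal is correct and follows the paper's proof, which simply combines Corollary~\ref{cor:undec-prop-grzt} with Proposition~\ref{prop:finite-extension}. Your extra justification of $\TL{S4} \subseteq \TL{Grz}$ is sound: you derive $\B p \to p$ and $\B p \to \B\B p$ syntactically from $\axiom{grz}$, and dually for $\bb$ from $\axiom{grz}^\partial$, rather than relying on frame validity. The paper only asserts this containment, so your argument fills that step in.
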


\begin{proof}
    This follows immediately from Corollary~\ref{cor:undec-prop-grzt} and Proposition~\ref{prop:finite-extension}.
\end{proof}

Another consequence concerns bi-superintuitionistic logics. Recall that \emph{superintuitionistic logics} are extensions of the intuitionistic logic $\IL{IPC}$, and \emph{bi-superintuitionistic logics} are extensions of the bi-intuitionistic logic $\IL{biIPC}$, also known as \emph{Heyting-Brouwer logic}, whose language has an additional symbol $\leftarrow$ of \emph{coimplication} \cite{Rauszer1974, wolterLogicsCoimplication1998}. The celebrated Blok-Esakia theorem states that there is a lattice isomorphism between $\NExt \ML{Grz}$ and $\Ext \IL{IPC}$, the lattice of superintuitionistic logics \cite{Blokthesis, Esakia1976ModalCompanions}. Many properties are preserved and reflected under this isomorphism (see, e.g., \cite[Section 9.6]{Chagrov.Zakharyaschev1997}), which allows us to transfer undecidability results between $\NExt \ML{Grz}$ and $\Ext \IL{IPC}$. This was demonstrated in \cite{Chagrov.Zakharyaschev1993}. Wolter \cite{wolterLogicsCoimplication1998} proved an analogue of Blok-Esakia theorem for $\NExt \TL{Grz}$ and $\Ext \IL{biIPC}$, and showed that the FMP is preserved and reflected under the corresponding isomorphism. We use this fact to transfer the undecidability of some properties from $\NExt \TL{Grz}$ to $\Ext \IL{biIPC}$, in the same spirit of \cite{Chagrov.Zakharyaschev1993}. See also \cite{BezhanishviliBLOKESAKIATHEOREMSSTABLE2025a} for a uniform treatment of these Blok-Esakia theorems. 

Let us recall the minimal preliminaries on the Blok-Esakia theorem for $\NExt \TL{Grz}$ and $\Ext \IL{biIPC}$; see \cite{wolterLogicsCoimplication1998,BezhanishviliBLOKESAKIATHEOREMSSTABLE2025a} for details. Let $t$ be the extension of G\"odel-McKinsey-Tarski translation to the bi-intuitionistic setting, i.e., $t$ translates a bi-intuitionistic formula into a tense formula. In particular, $t(\phi\to\psi) \coloneqq \B(t(\phi) \to t(\psi))$ and $t(\phi\leftarrow\psi) \coloneqq \bd(t(\phi) \wedge \neg t(\psi))$. The maps $\sigma: \Ext \IL{biIPC} \to \NExt \TL{Grz}$ and $\rho: \NExt \TL{Grz} \to \Ext \IL{biIPC}$ are defined as 
\[\sigma L = \TL{Grz} \oplus \set{t(\phi): \phi \in L}, \quad \rho \ML{M} = \IL{biIPC} \oplus \set{\phi: t(\phi) \in \ML{M}}.\]
The two maps $\sigma$ and $\rho$ are mutually inverse lattice isomorphisms.

For our purposes, we also need to compute these maps on finitely axiomatizable logics. 

\begin{lemma} \label{lem:computable-BE}
    The maps $\sigma: \Ext \IL{biIPC} \to \NExt \TL{Grz}$ and $\rho: \NExt \TL{Grz} \to \Ext \IL{biIPC}$ are computable on finitely axiomatizable logics. That is, these isomorphisms map a finitely axiomatizable logic to a finitely axiomatizable one, and there is an algorithm that, given a bi-intuitionistic (resp. tense) formula $\psi$, produces an axiom of $\sigma(\IL{biIPC} \oplus \psi)$ (resp. $\rho(\TL{Grz} \oplus \psi)$).
\end{lemma}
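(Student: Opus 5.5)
For $\sigma$ the computation comes straight from the definition. I will show that $\sigma(\IL{biIPC}\oplus\psi) = \TL{Grz}\oplus t(\psi)$. The inclusion $\supseteq$ holds because $\psi \in \IL{biIPC}\oplus\psi$. For $\subseteq$, take any $\phi \in \IL{biIPC}\oplus\psi$. Then $\phi$ is derivable from $\psi$ and theorems of $\IL{biIPC}$ using substitution and modus ponens. I would argue by induction on such a derivation that $t(\phi)\in \TL{Grz}\oplus t(\psi)$. The steps are as follows.
\begin{enumerate}
\item Theorems of $\IL{biIPC}$ translate into $\TL{Grz}$; this is the soundness half of the embedding in \cite{wolterLogicsCoimplication1998}.
\item Modus ponens is preserved because $\B(t\phi\to t\chi)\to(t\phi\to t\chi)$ is an $\TL{S4}$-theorem.
\item Substitution is preserved because $t(s(\phi))$ is $\TL{S4}$-equivalent to the tense substitution instance of $t(\phi)$ obtained by $p\mapsto t(s(p))$. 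This uses that every $t(\chi)$ is $\TL{S4}$-equivalent to $\B t(\chi)$, since $t$ of a variable is $\B p$ and $t$ of compound formulas is built from $\B$-, $\bd$- and Boolean combinations of translations.
\end{enumerate}
Hence the algorithm for $\sigma$ simply outputs $t(\psi)$, which is clearly computable.

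For $\rho$, I would use a computable inverse translation. Let $\TL{Grz}\oplus\psi$ be given, and note that $\rho$ is the inverse of $\sigma$. Following the Blok--Esakia proof in \cite{wolterLogicsCoimplication1998,BezhanishviliBLOKESAKIATHEOREMSSTABLE2025a}, the natural candidate is to find a bi-intuitionistic formula $\chi$ with $\TL{Grz}\oplus\psi = \TL{Grz}\oplus t(\chi)$. The standard way to do this is the \emph{skeleton} argument, in which every tense formula is equivalent over $\TL{Grz}$, as an axiom, to a translated one. My first attempt would be to avoid reproducing this and instead exploit the lattice isomorphism together with the fact that $\sigma$ is computable.

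Concretely, $\rho(\TL{Grz}\oplus\psi)$ is some logic $\IL{biIPC}\oplus\Gamma$. Because $\TL{Grz}\oplus\psi$ is finitely axiomatizable, it is a compact element of $\NExt\TL{Grz}$, and compactness is a lattice-theoretic property, so it is preserved by the isomorphism $\rho$. Therefore $\rho(\TL{Grz}\oplus\psi)$ is compact, i.e.\ $\rho(\TL{Grz}\oplus\psi)=\IL{biIPC}\oplus\chi$ for a single formula $\chi$. This settles the first claim of the lemma. To compute such a $\chi$, I would enumerate pairs $(\chi, d)$, where $d$ is a candidate proof that $\TL{Grz}\oplus\psi \vdash t(\chi)$ and that $\TL{Grz}\oplus t(\chi)\vdash\psi$. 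Both theories are recursively enumerable, and by the previous paragraph $\TL{Grz}\oplus t(\chi)=\sigma(\IL{biIPC}\oplus\chi)$. Such a $\chi$ exists, so the search terminates. Moreover, any $\chi$ it finds satisfies $\sigma(\IL{biIPC}\oplus\chi)=\TL{Grz}\oplus\psi$, and applying $\rho$ gives exactly the desired logic.

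The main obstacle is making sure the search is legitimate. It requires that derivability in $\TL{Grz}\oplus\phi$ from finitely many axioms is r.e., which is immediate from the Hilbert calculus, and the compactness transfer, which needs only that compact elements are exactly the finitely axiomatizable logics in both lattices. Alternatively, one could give an explicit inverse translation of $\psi$ via Wolter's construction. The search argument is less efficient, but it suffices for computability.
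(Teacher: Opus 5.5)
Your proposal is correct and follows essentially the paper's proof. The search for $\chi$ with $t(\chi)\in\TL{Grz}\oplus\psi$ and $\psi\in\TL{Grz}\oplus t(\chi)$ is exactly the paper's algorithm, and the $\sigma$ part likewise amounts to outputting $t(\psi)$. There are two small differences:
\begin{enumerate}
\item You get a single axiom by transporting compactness along the isomorphism. The paper instead derives it from $\ML{M}=\sigma\rho\ML{M}$, compactness of derivations, and $t$ commuting with conjunction.
\item You prove $\sigma(\IL{biIPC}\oplus\psi)=\TL{Grz}\oplus t(\psi)$ explicitly, whereas the paper treats it as immediate.
\end{enumerate}

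One small repair is needed in that $\sigma$ part. Extensions of $\IL{biIPC}$ are closed not only under substitution and modus ponens but also under the rule $\phi/\neg(\top\leftarrow\phi)$; they must be, or $\sigma$ would not be injective. So your induction needs this case too. It is immediate: $t(\neg(\top\leftarrow\phi))$ is $\TL{K}$-equivalent to $\B\bb t(\phi)$, which follows from $t(\phi)$ by the two necessitation rules.
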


\begin{proof}
    The statement for $\sigma$ follows immediately from its definition, by noting that the syntactic translation $t$ is computable. 
    
    We show the statement for $\rho$. Let $\ML{M} = \TL{Grz} \oplus \psi$. Then, since $\ML{M} = \sigma \rho \ML{M}$, we have $\TL{Grz} \oplus \psi = \TL{Grz} \oplus \set{t(\phi): t(\phi) \in \ML{M}}$. By the compactness and the fact that $t$ commutes with conjunction, there exists a bi-intuitionistic formula $\phi$ such that $\TL{Grz} \oplus \psi = \TL{Grz} \oplus t(\phi)$, hence $\rho \ML{M} = \IL{biIPC} \oplus \phi$. 

    To find such a $\phi$, we can enumerate all bi-intuitionistic formulas $\phi_0, \phi_1, \ldots$, while trying to verify $\TL{Grz} \oplus \psi = \TL{Grz} \oplus t(\phi_i)$ for each $\phi_i$. The existence of a formula $\phi_i$ with $\TL{Grz} \oplus \psi = \TL{Grz} \oplus t(\phi_i)$ is guaranteed by the previous argument. Moreover, for such a formula $\phi_i$, since both $\TL{Grz} \oplus \psi$ and $\TL{Grz} \oplus t(\phi_i)$ are recursively enumerable by enumerating all proofs, we can verify $t(\phi_i) \in \TL{Grz} \oplus \psi$ and $\psi \in \TL{Grz} \oplus t(\phi_i)$ in finite time if they are true. Thus, this algorithm will eventually find a desired formula $\phi$ and verify $\TL{Grz} \oplus \psi = \TL{Grz} \oplus t(\phi)$. Therefore, $\rho$ is computable on finitely axiomatizable logics. 
\end{proof}

\begin{corollary} \label{cor:undec-fmp-biIPC}
    The FMP and tabularity are undecidable in $\Ext \IL{biIPC}$. 
\end{corollary}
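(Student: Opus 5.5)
We reduce the undecidability of the FMP and of tabularity in $\Ext\IL{biIPC}$ to the corresponding results for $\NExt\TL{Grz}$ in Corollary~\ref{cor:undec-prop-grzt}, via the Blok-Esakia isomorphism. The plan is to show that $\sigma$ and $\rho$ preserve and reflect both properties, and then use Lemma~\ref{lem:computable-BE} to turn a hypothetical decision procedure for $\Ext\IL{biIPC}$ into one for $\NExt\TL{Grz}$.

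\textbf{Reduction.} Suppose $P$ (the FMP or tabularity) were decidable in $\Ext\IL{biIPC}$. Given a tense formula $\psi$, we would first compute, by Lemma~\ref{lem:computable-BE}, a bi-intuitionistic formula $\phi$ with $\rho(\TL{Grz}\oplus\psi)=\IL{biIPC}\oplus\phi$. We would then run the assumed decision procedure on $\phi$. If $P$ is preserved and reflected by $\rho$, this decides whether $\TL{Grz}\oplus\psi$ has $P$, contradicting Corollary~\ref{cor:undec-prop-grzt}(1) or (3). So everything rests on the preservation and reflection of the two properties.

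\textbf{The FMP.} Wolter \cite{wolterLogicsCoimplication1998} showed that $\sigma$ and $\rho$ preserve and reflect the FMP, so this case follows directly once the reduction is in place.

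\textbf{Tabularity.} This case needs an argument, though a short one, and is the only real work.
\begin{itemize}
    \item If $\ML{M}=\Log(\ff)$ for a finite $\TL{Grz}$-frame $\ff$, then $\ff$ is a finite partial order. By the standard translation lemma ($\ff,x\md t(\phi)$ iff $x$ satisfies $\phi$ in the intuitionistic reading on $\ff$ with upsets), $\rho\ML{M}$ is the bi-intuitionistic logic of $\ff$, hence tabular.
    \item Conversely, if $\rho\ML{M}$ is the logic of a finite poset $\ff$, then $\sigma\rho\ML{M}=\ML{M}$ is contained in $\Log(\ff)$. We need equality. Since $\sigma$ is a lattice isomorphism and $\rho\Log(\ff)=\rho\ML{M}$, injectivity of $\rho$ gives $\ML{M}=\Log(\ff)$.
\end{itemize}
The only point to check in the second item is that $\Log(\ff)$ lies in $\NExt\TL{Grz}$. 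This holds because finite posets are antisymmetric and Noetherian in both directions, so Proposition~\ref{prop:grz} applies.
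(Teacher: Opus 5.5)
Your proposal is correct and follows essentially the paper's proof: the computable map $\rho$ from Lemma~\ref{lem:computable-BE} is the reduction, and the preservation and reflection of the FMP under the isomorphism is taken from Wolter. The only difference is that you verify the transfer of tabularity yourself, via the translation lemma and injectivity of $\rho$, whereas the paper cites Wolter for tabularity as well.
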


\begin{proof}
   It suffices to observe that the map $\rho: \NExt \TL{Grz} \to \Ext \IL{biIPC}$ restricted to finitely axiomatizable logics, which is computable by Lemma~\ref{lem:computable-BE}, is the desired reduction from the decision problem of each of the properties in $\NExt \TL{Grz}$ to that in $\Ext \IL{biIPC}$.

    The two maps $\sigma$ and $\rho$ are mutually inverse lattice isomorphisms which preserve and reflect the FMP and tabularity \cite{wolterLogicsCoimplication1998}. It follows that for any $\ML{M} \in \NExt \TL{Grz}$, we have that $\rho \ML{M}$ enjoys the FMP (resp. is tabular) if and only if $\ML{M} = \sigma \rho \ML{M}$ enjoys the FMP (resp. is tabular). Thus, both the FMP and tabularity are undecidable in $\Ext \IL{biIPC}$, since they are undecidable in $\NExt \TL{Grz}$ by Corollary~\ref{cor:undec-prop-grzt}.
\end{proof}

\begin{remark}
    As pointed out in \cite{WolterZakharyaschev2014}, ``in contrast to the situation for si-logics [superintuitionistic logics], the preservation properties of those mappings [in the setting of $\NExt \TL{Grz}$ and $\Ext \IL{biIPC}$] have not yet been investigated in any detail.'' We leave it for future work to systematically study preservation theorems for the Blok-Esakia theorem for $\NExt \TL{Grz}$ and $\Ext \IL{biIPC}$, and thus transfer our undecidability results in $\NExt \TL{Grz}$ to $\Ext \IL{biIPC}$ beyond the FMP and tabularity.
\end{remark}

As for the decision problem of coincidence with a tabular logic, we have shown in Theorems~\ref{thm:dec-grzt-=tab} and \ref{thm:dec-grzt-=Cl1} that, contrary to the case of $\NExt \TL{K4}$, both $\NExt \TL{S4}$ and $\NExt \TL{Grz}$ contain a tabular logic with a decidable coincidence problem. So, for $\NExt \TL{S4}$ and $\NExt \TL{Grz}$, there is no general undecidability result for the coincidence problems as in the case of $\NExt \TL{K4}$ \cite{Chen.Takahashi2026}. However, we still have the following theorem, which states that there are as many tabular logics in $\NExt \TL{Grz}$ as there could be with an undecidable coincidence problem.

\begin{theorem}[Main theorem 2] \label{thm:undec-prop-S4t-=tab}
    There are infinitely many tabular tense logics $L \in \NExt \TL{Grz}$ such that coincidence with $L$ is undecidable in $\NExt \TL{Grz}$.
\end{theorem}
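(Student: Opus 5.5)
We will obtain Theorem~\ref{thm:undec-prop-S4t-=tab} from the same Minsky-machine reduction that underlies Theorem~\ref{thm:undec-scheme-grzt}, now with a family of tabular targets. Fix $\M$ and $\tup{s,n,m}$ as in Theorem~\ref{Thm:undec-minsky}. For each $k \in \omega$ we want a finite $\TL{Grz}$-frame $\ff_k$ and a computable map $\tup{t,a,b} \mapsto \phi_{\tup{t,a,b}}$ such that
\[
\TL{Grz}\oplus\axiom{tab}_k\oplus\phi_{\tup{t,a,b}} = \Log(\ff_k) \iff \tup{t,a,b}\notin\Reach_\M(\tup{s,n,m}).
\]
Here $\axiom{tab}_k$ is a fixed finite axiom (for instance a suitable conjunction of $\axiom{bd}_j,\axiom{bw}_j,\axiom{bw}^\partial_j,\axiom{br}_j$). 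Such a map reduces the complement of $\Reach_\M(\tup{s,n,m})$ to coincidence with $\Log(\ff_k)$, so the latter is undecidable. We will take $\ff_k$ pairwise non-isomorphic, for example by attaching a chain or antichain gadget of size depending on $k$, and rooted. Then the $\Log(\ff_k)$ are pairwise distinct, which gives infinitely many tabular logics.

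The construction proceeds in the following steps. First, build the general frame $\gf$ simulating $\M$, with points coding configurations and registers, as in the proof of Theorem~\ref{thm:undec-scheme-grzt}. The good-valuation formula $\phi^*$ from Definition~\ref{def-good-val} lets us define, by formulas with variables, the points of $\gf$ corresponding to each configuration, uniformly for good valuations. Second, define $\phi_{\tup{t,a,b}}$ as $\phi^* \to \neg\chi_{\tup{t,a,b}}$, suitably boxed with $\all{N}$. Here $\chi_{\tup{t,a,b}}$ describes the point for $\tup{t,a,b}$ together with axioms encoding the instructions of $\M$. The intended behaviour is this: if $\tup{t,a,b}$ is reachable, then $\gf$ with its finite subframe glued to $\ff_k$ still validates $\phi_{\tup{t,a,b}}$, so the logic is strictly below $\Log(\ff_k)$. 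If $\tup{t,a,b}$ is unreachable, the formula refutes the parts of $\gf$ that would witness extra non-theorems, and we must show that every rooted frame of $\TL{Grz}\oplus\axiom{tab}_k\oplus\phi_{\tup{t,a,b}}$ is a p-morphic image of a generated subframe of $\ff_k$. Third, we check that $\Log(\ff_k)\supseteq$ the logic holds in both cases, i.e.\ $\ff_k\md\phi_{\tup{t,a,b}}$ always. We will arrange this by making $\ff_k$ admit no good valuation, so that $\phi^*$ is false everywhere on $\ff_k$.

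To get the inclusion $\Log(\ff_k)\subseteq L$ in the unreachable case, we plan a splitting-style argument in the spirit of Lemma~\ref{lem:splitting-grzt}. The bounding axioms make $L$ tabular by Theorem~\ref{thm:grz+bp}, so it suffices to consider finite rooted frames $\mathfrak{G}\md L$. For these we show that either $\mathfrak{G}$ is a p-morphic image of a generated subframe of $\ff_k$, or $\mathfrak{G}$ carries a good valuation that simulates a computation of $\M$ reaching $\tup{t,a,b}$, contradicting unreachability. Since $\mathfrak{G}$ is finite, any such simulation is a finite computation, so the finite frames can only encode genuinely reachable configurations.

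The main obstacle is this last dichotomy. We must show that a finite frame refuting $\phi^*\to\neg\chi$ really encodes a correct computation, while also making sure the frames without good valuations are exactly the images of $\ff_k$. To attempt this, we would first fix $\ff_k$ as a rooted, bounded $\TL{Grz}$-frame of depth and width within the bounds of condition (1) of Theorem~\ref{thm:undec-scheme-grzt}, extended by a $k$-gadget. We would then axiomatize via a Jankov-type formula of $\ff_k$ which, combined with the Minsky axioms under $\phi^*$, forces every rooted finite frame to be either a subframe image of $\ff_k$ or to contain the simulation pattern.
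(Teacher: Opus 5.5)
Your proposed reduction cannot exist, for two independent reasons. First, you make the base $L_0=\TL{Grz}\oplus\axiom{tab}_k$ tabular via Theorem~\ref{thm:grz+bp}, so every logic in the range of your reduction extends a fixed tabular logic. A tabular tense logic has only finitely many normal extensions. By J\'onsson's lemma, each of them is the logic of those members of a fixed finite list of finite frames that validate it. Hence whether $L_0\oplus\phi=\Log(\ff_k)$ holds is decided by model-checking $\phi$ on that finite list, and composing this with $\tup{t,a,b}\mapsto\phi_{\tup{t,a,b}}$ would decide $\Reach$. You can see the problem concretely in your sketch. The frames relevant to $L_0$ are finitely many fixed finite frames, so no simulation on them can track unboundedly long computations, and the dichotomy ``image of $\ff_k$ or simulation of a computation reaching $\tup{t,a,b}$'' cannot capture reachability. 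Moreover, your reachable-case witness built from $\gf$ is not a frame of $L_0$ at all, since the ladder $a_0,a_1,\dots$ contains strict chains of every finite length.

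Second, the direction of your reduction is impossible regardless of the base. Since $\Reach$ is undecidable, there is some $c_0\notin\Reach$, and your equivalence would give $\Log(\ff_k)=\TL{Grz}\oplus(\axiom{tab}_k\wedge\phi_{c_0})$. For finitely axiomatizable $\Log(\ff_k)$, the set $\set{\phi:\TL{Grz}\oplus\phi=\Log(\ff_k)}$ is r.e.: check $\ff_k\md\phi$ and search for a derivation of the axiom of $\Log(\ff_k)$ in $\TL{Grz}\oplus\phi$. So the complement of $\Reach$ would be r.e., and since $\Reach$ is r.e., it would be decidable. The reduction must send \emph{reachable} configurations to the tabular target: a finite computation yields a derivation that collapses the logic, while unreachability is witnessed semantically by an infinite frame.

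That is the paper's proof. The targets are $L_d=\TL{Grz}\oplus\set{\axiom{bd}_2,\axiom{bw}_2,\axiom{bw}^\partial_5,\axiom{br}_{d+7}}$. The logic $L_d(t,k,l)$ is built over the \emph{non}-tabular base $\TL{Grz}\oplus\axiom{bw}^\partial_5\oplus\axiom{br}_{d+7}$, with $\E,\A$ read as $\is{d+7},\all{d+7}$. The missing bounding axioms are exactly $\neg\phi^*\equiv\axiom{bd}_2\wedge\axiom{bw}_2$, so your instinct that the target should admit no good valuation is right. If $\tup{t,k,l}\in\Reach$, Lemma~\ref{lem:S4t-syntax-main} together with axiom (II) derives $\neg\phi^*$, all other axioms follow from it, and $L_d(t,k,l)=L_d$. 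If $\tup{t,k,l}\notin\Reach$, the infinite general frame $\gf$ validates $L_d(t,k,l)$ and refutes $\neg\phi^*$. Meanwhile, axioms (III) and (IV) force infinite descending chains in any Kripke frame with a $\phi^*$-point, contradicting $\axiom{grz}^\partial$. So $L_d(t,k,l)$ is Kripke incomplete, hence differs from the tabular $L_d$. No Jankov-formula or splitting argument is needed.
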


\begin{corollary} \label{cor:undec-prop-S4t-=tab}
    There are infinitely many tabular tense logics $L \in \NExt \TL{S4}$ such that coincidence with $L$ is undecidable in $\NExt \TL{S4}$.
\end{corollary}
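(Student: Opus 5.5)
The plan is to derive Corollary~\ref{cor:undec-prop-S4t-=tab} directly from Theorem~\ref{thm:undec-prop-S4t-=tab}, using the fact that $\TL{Grz}$ is a finitely axiomatizable extension of $\TL{S4}$, namely $\TL{Grz} = \TL{S4} \oplus (\axiom{grz} \wedge \axiom{grz}^\partial)$. This is the same argument as in Proposition~\ref{prop:finite-extension}. The subtlety is that the property ``coincidence with $L$'' must be read as a property in two different lattices, $\NExt\TL{Grz}$ and $\NExt\TL{S4}$, so I will spell the reduction out rather than just cite the proposition.

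Fix one of the infinitely many tabular logics $L \in \NExt\TL{Grz}$ given by Theorem~\ref{thm:undec-prop-S4t-=tab}. Since $\TL{S4} \subseteq \TL{Grz} \subseteq L$, we have $L \in \NExt\TL{S4}$, and $L$ is of course still tabular. Suppose, towards a contradiction, that coincidence with $L$ were decidable in $\NExt\TL{S4}$. In the sense of Definition~\ref{def:decidable}, this means the set $\set{\psi : \TL{S4}\oplus\psi = L}$ is decidable.

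Now, given any formula $\phi$, we can effectively form $\psi_\phi \coloneqq \axiom{grz} \wedge \axiom{grz}^\partial \wedge \phi$. Clearly
\[\TL{S4} \oplus \psi_\phi = \TL{Grz} \oplus \phi,\]
since both sides are the least tense logic containing $\TL{S4}$, $\axiom{grz}$, $\axiom{grz}^\partial$ and $\phi$. Hence $\TL{Grz}\oplus\phi = L$ if and only if $\TL{S4}\oplus\psi_\phi = L$. The map $\phi \mapsto \psi_\phi$ is therefore a computable reduction of $\set{\phi : \TL{Grz}\oplus\phi = L}$ to $\set{\psi : \TL{S4}\oplus\psi = L}$. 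This would make coincidence with $L$ decidable in $\NExt\TL{Grz}$, contradicting the choice of $L$.

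Distinct logics $L$ from Theorem~\ref{thm:undec-prop-S4t-=tab} remain distinct as members of $\NExt\TL{S4}$, so we obtain infinitely many tabular logics in $\NExt\TL{S4}$ whose coincidence problem is undecidable. There is no real obstacle here. The only point to check is the identity $\TL{S4}\oplus\psi_\phi = \TL{Grz}\oplus\phi$, which holds because $\TL{Grz}$ is by definition $\TL{K}\oplus\axiom{grz}\oplus\axiom{grz}^\partial$ and already contains $\TL{S4}$.
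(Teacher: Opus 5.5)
Your proposal is correct and follows the paper's proof: the paper also derives this corollary directly from Theorem~\ref{thm:undec-prop-S4t-=tab}, Proposition~\ref{prop:finite-extension}, and the fact that tabular logics in $\NExt\TL{Grz}$ belong to $\NExt\TL{S4}$. The only difference is that you write out the reduction $\phi \mapsto \axiom{grz}\wedge\axiom{grz}^\partial\wedge\phi$ explicitly instead of citing the proposition.
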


\begin{proof}
    This follows immediately from Theorem~\ref{thm:undec-prop-S4t-=tab}, Proposition~\ref{prop:finite-extension}, and the fact that any tabular logic in $\NExt \TL{Grz}$ is in $\NExt \TL{S4}$.
\end{proof}

We have consequences on $\Ext \IL{biIPC}$ similar to Corollary~\ref{cor:undec-fmp-biIPC}.

\begin{corollary} \label{cor:undec-prop-biIPC-=tab}
    There are infinitely many tabular bi-superintuitionistic logics $L \in \Ext \IL{biIPC}$ such that coincidence with $L$ is undecidable in $\Ext \IL{biIPC}$.
\end{corollary}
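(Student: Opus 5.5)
The plan is to transfer Theorem~\ref{thm:undec-prop-S4t-=tab} along the Blok-Esakia isomorphism, in the same way Corollary~\ref{cor:undec-fmp-biIPC} was derived from Corollary~\ref{cor:undec-prop-grzt}. Let $\{L_i : i \in \omega\}$ be the infinitely many pairwise distinct tabular logics in $\NExt\TL{Grz}$ given by Theorem~\ref{thm:undec-prop-S4t-=tab}, each with an undecidable coincidence problem. For each $i$ put $L_i' \coloneqq \rho L_i \in \Ext\IL{biIPC}$. Since $\rho$ is a lattice isomorphism, it is injective, so the $L_i'$ are pairwise distinct. Since $\rho$ preserves and reflects tabularity \cite{wolterLogicsCoimplication1998}, each $L_i'$ is tabular.

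Next I would show that coincidence with $L_i$ in $\NExt\TL{Grz}$ reduces to coincidence with $L_i'$ in $\Ext\IL{biIPC}$. Given a tense formula $\psi$, Lemma~\ref{lem:computable-BE} lets us compute a bi-intuitionistic formula $\phi_\psi$ with $\rho(\TL{Grz}\oplus\psi) = \IL{biIPC}\oplus\phi_\psi$. Because $\rho$ is a bijection with inverse $\sigma$, we have
\[
\TL{Grz}\oplus\psi = L_i \iff \rho(\TL{Grz}\oplus\psi) = \rho L_i \iff \IL{biIPC}\oplus\phi_\psi = L_i'.
\]
Hence, if coincidence with $L_i'$ were decidable in $\Ext\IL{biIPC}$, we could first compute $\phi_\psi$ and then decide whether $\TL{Grz}\oplus\psi = L_i$. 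That would contradict Theorem~\ref{thm:undec-prop-S4t-=tab}, so coincidence with each $L_i'$ is undecidable.

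I expect only one step to need care: the reduction must be total and computable. This is exactly what Lemma~\ref{lem:computable-BE} guarantees. Its search over bi-intuitionistic formulas always terminates, because $\rho$ sends finitely axiomatizable logics to finitely axiomatizable ones. Everything else follows from the isomorphism facts already cited.
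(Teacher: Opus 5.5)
Your proposal is correct and follows essentially the paper's own route. You transfer Theorem~\ref{thm:undec-prop-S4t-=tab} along the isomorphism $\rho$, using that $\rho$ is injective and preserves tabularity, and that Lemma~\ref{lem:computable-BE} makes $\psi \mapsto \phi_\psi$ a computable reduction of coincidence with $L_i$ to coincidence with $\rho L_i$, exactly as in the proof of Corollary~\ref{cor:undec-fmp-biIPC}.
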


\begin{proof} 
    This follows from Theorem~\ref{thm:undec-prop-S4t-=tab} and Lemma~\ref{lem:computable-BE} by a similar proof as Corollary~\ref{cor:undec-fmp-biIPC}.
\end{proof}

\begin{remark}
    Since there are countably many tabular logics in total, Theorem~\ref{thm:dec-grzt-=tab} and Corollary~\ref{cor:undec-prop-S4t-=tab} together imply that there are as many tabular logics in $\NExt \TL{S4}$ as there could be with a decidable and an undecidable coincidence problem. However, we do not have a full characterization of tabular logics $L$ in $\NExt \TL{S4}$ for which the coincidence problem is decidable. We also leave it open whether there is more than one consistent tabular logic $L$ in $\NExt \TL{Grz}$ or $\Ext \IL{biIPC}$ with a decidable coincidence problem. Note that one such logic in $\NExt \TL{Grz}$ is given by Theorem~\ref{thm:dec-grzt-=Cl1}, and one in $\Ext \IL{biIPC}$ follows from \cite[Theorem 31]{wolterLogicsCoimplication1998}.
\end{remark}

This concludes our discussion of the main theorems and their implications. We close this subsection by giving a brief outline of the proof of the main theorems, Theorems~\ref{thm:undec-scheme-grzt} and \ref{thm:undec-prop-S4t-=tab}. The rest of the paper is devoted to proving them. 

According to Theorem~\ref{Thm:undec-minsky}, there exist a Minsky machine $\M$ and a configuration $\tup{s,n,m}$ of $\M$ such that the set $\Reach_{\M}(\tup{s,n,m}) = \set{\tup{t,k,l}: (\M: \tup{s,n,m}\rightsquigarrow\tup{t,k,l})}$ is undecidable. In what follows, we fix such a Minsky machine $\M$ and a configuration $\tup{s,n,m}$ of $\M$. To simplify notation, we write $\Reach$ for the set $\Reach_{\M}(\tup{s,n,m})$.

The proof proceeds as follows.
\begin{enumerate}
    \item Section~\ref{subsec-1}: We construct a general frame $\gf$ that encodes the undecidable set $\Reach$ of configurations. We define a formula $\phi^*$ that characterizes \emph{good} valuations (see Definition~\ref{def-good-val}) on $\gf$, under which enough subsets of $\gf$ are definable.
    \item Section~\ref{subsec-2}: We define a formula $AxM$ that encodes the behavior of $\M$. By observing how $\gf$ simulates computation in $\M$, we show that $\gf \models AxM$.
    \item Section~\ref{subsec-3}: We construct a reduction that maps a configuration $\tup{t,k,l}$ to a logic $L(t, k, l)$ in $\NExt \TL{Grz}$ that yields Theorem~\ref{thm:undec-scheme-grzt}. We show Theorem~\ref{thm:undec-prop-S4t-=tab} by slightly modifying the construction for each $d \in \omega$.
\end{enumerate}

\subsection{The general frame \texorpdfstring{$\gf$}{F} and good valuations} \label{subsec-1}

Let us begin the proof by defining the general frame $\gf$, which encodes the undecidable set $\Reach$ of configurations. It will be used, in particular, to show Kripke incompleteness and undecidability. Recall that we fixed a Minsky machine $\M$ and a configuration $\tup{s,n,m}$ of $\M$ such that the set $\Reach = \set{\tup{t,k,l}: (\M: \tup{s,n,m}\rightsquigarrow\tup{t,k,l})}$ is undecidable (cf. Theorem~\ref{Thm:undec-minsky}). We may assume that $\M$ contains $t_0$ many states, labeled as $0, \dots, t_0-1$.

\begin{definition}
    Let $\gf=(W,R,A)$ be the general frame defined as follows. See Figure~\ref{fig:gf-S4t} for an illustration of $\gf$.
    \begin{itemize}
        \item $W$ is the union of the following sets:
        \begin{itemize}
            \item $\set{a_i,b_i : i < \omega} (\eqqcolon AB)$,
            \item $\set{c_i,c'_i : i \leq t_0} (\eqqcolon C)$,
            \item $\set{x_i : i \leq 5}$,
            \item $\set{e}$,
            \item $\set{\tup{t,k,l}_i : i \leq 3 \text{ and } \tup{t,k,l} \in \Reach}$.
        \end{itemize}
        \item $R$ is the reflexive-transitive closure of the union of the following sets:
        \begin{itemize}
            \item $\set{(a_{i+1},a_i),(a_{i+1},b_i) : i \in \omega} \cup \set{(b_{i+1},b_i),(b_{i+2},a_i) : i \in \omega}$,
            \item $\set{(c_{i+1},c_i),(c'_{i+1},c'_i) : i < t_0} \cup \set{(c'_i,c_i),(c'_i,a_{i+1}) : i \leq t_0}$,
            \item $\set{(c_{t+1},\tup{t,k,l}_0),(a_{k+1},\tup{t,k,l}_1),(b_{l+1},\tup{t,k,l}_2) : \tup{t,k,l} \in \Reach}$,
            \item $\set{(z_1,z_0),(z_2,z_0),(z_2,z_3) : z \in \Reach}$,
            \item $\set{(x_0,x_1),(c_0,x_1),(x_2,x_1),(x_2,x_3),(x_4,x_3),(x_4,a_0),(x_5,b_0)}$,
            \item $\set{(e,b_i) : i \in \omega} \cup \set{(e,c'_{t_0})}$.
        \end{itemize}
        \item $A = \set{U\sub W: U \cap AB \text{ is finite or cofinite}}$.
    \end{itemize}
\end{definition}

\begin{figure}[htbp]
        \small
    \[
    \begin{tikzpicture}[scale=0.9]
    \def\ptRad{.15pt}
    \node (a0) at (0,8)[label=left:$a_0$]{$\circ$};
    \node (a1) at (0,7)[label=left:$a_1$]{$\circ$};
    \node (a2) at (0,6)[label=left:$a_2$]{$\circ$};
    \node (a3) at (0,5)[label=left:$a_3$]{$\circ$};
    \node (a4) at (0,4)[label=left:$a_4$]{$\circ$};
    \node (b0) at (1.5,8)[label=right:$b_0$]{$\circ$};
    \node (b1) at (1.5,7)[label=right:$b_1$]{$\circ$};
    \node (b2) at (1.50,6)[label=right:$b_2$]{$\circ$};
    \node (b3) at (1.5,5)[label=right:$b_3$]{$\circ$};
    \node (b4) at (1.5,4)[label=right:$b_4$]{$\circ$};   
    \draw [->] (a1) -- (a0);
    \draw [->] (a2) -- (a1);
    \draw [->] (a3) -- (a2);
    \draw [->] (a4) -- (a3);
    \draw [->] (b1) -- (b0);
    \draw [->] (b2) -- (b1);
    \draw [->] (b3) -- (b2);
    \draw [->] (b4) -- (b3);
    \draw [->] (a1) -- (b0);
    \draw [->] (a2) -- (b1);
    \draw [->] (a3) -- (b2);
    \draw [->] (a4) -- (b3);
    \draw [->] (b2) -- (a0);
    \draw [->] (b3) -- (a1);
    \draw [->] (b4) -- (a2);
    \node (vd1) at (.75,3.4){$\vdots$};
    \node (vd2) at (0,3.4){$\vdots$};
    \node (vd3) at (1.5,3.4){$\vdots$};

    \node (al0) at (0,2.5)[label=left:$a_{k}$]{$\circ$};
    \node (al1) at (0,1.5)[label=left:$a_{k+1}$]{$\circ$};
    \node (al2) at (0,0.5)[label=left:$a_{k+2}$]{$\circ$};
    \node (bl0) at (1.5,2.5)[label=right:$b_{k}$]{$\circ$};
    \node (bl1) at (1.5,1.5)[label=right:$b_{k+1}$]{$\circ$};
    \node (bl2) at (1.50,0.5)[label=right:$b_{k+2}$]{$\circ$};
    \draw [->] (al1) -- (al0);
    \draw [->] (al2) -- (al1);
    \draw [->] (bl1) -- (bl0);
    \draw [->] (bl2) -- (bl1);
    \draw [->] (al1) -- (bl0);
    \draw [->] (al2) -- (bl1);
    \draw [->] (bl2) -- (al0);
    \node (vd4) at (0,0){$\vdots$};
    \node (vd5) at (.75,0){$\vdots$};
    \node (vd6) at (1.5,0){$\vdots$};

    \node (ak0) at (0,-3.5+2.5)[label=left:$a_{l}$]{$\circ$};
    \node (ak1) at (0,-3.5+1.5)[label=left:$a_{l+1}$]{$\circ$};
    \node (ak2) at (0,-3.5+0.5)[label=left:$a_{l+2}$]{$\circ$};
    \node (bk0) at (1.5,-3.5+2.5)[label=right:$b_{l}$]{$\circ$};
    \node (bk1) at (1.5,-3.5+1.5)[label=right:$b_{l+1}$]{$\circ$};
    \node (bk2) at (1.50,-3.5+0.5)[label=right:$b_{l+2}$]{$\circ$};
    \draw [->] (ak1) -- (ak0);
    \draw [->] (ak2) -- (ak1);
    \draw [->] (bk1) -- (bk0);
    \draw [->] (bk2) -- (bk1);
    \draw [->] (ak1) -- (bk0);
    \draw [->] (ak2) -- (bk1);
    \draw [->] (bk2) -- (ak0);
    \node (vd4) at (0,-3.5+0){$\vdots$};
    \node (vd5) at (.75,-3.5+0){$\vdots$};
    \node (vd6) at (1.5,-3.5+0){$\vdots$};

    \node (x0) at (-5,7)[label=left:$x_0$]{$\circ$};
    \node (x1) at (-4,8)[label=left:$x_1$]{$\circ$};
    \node (x2) at (-3,7)[label=left:$x_2$]{$\circ$};
    \node (x3) at (-2,8)[label=left:$x_3$]{$\circ$};
    \node (x4) at (-1,7)[label=left:$x_4$]{$\circ$};
    \node (y1) at (2.5,7)[label=right:$x_5$]{$\circ$};
    \draw [->] (x0) -- (x1);
    \draw [->] (x2) -- (x1);
    \draw [->] (x2) -- (x3);
    \draw [->] (x4) -- (x3);
    \draw [->] (x4) -- (a0);
    \draw [->] (y1) -- (b0);

    \node (c0) at (-5+1,6)[label=left:$c_0$]{$\circ$};
    \node (c1) at (-5+1,-1+6)[label=left:$c_1$]{$\circ$};
    \node (vd) at (-5+1,-1+5.5){$\vdots$};
    \node (ct0) at (-5+1,-1+4.5)[label=left:$c_{t}$]{$\circ$};
    \node (ct1) at (-5+1,-1+3.5)[label=left:$c_{t+1}$]{$\circ$};
    \node (ci0) at (-5+1,-1+2)[label=left:$c_{t_0-1}$]{$\circ$};
    \node (ci1) at (-5+1,-1+1)[label=left:$c_{t_0}$]{$\circ$};
    \node (vd5) at (-5+1,-1+3){$\vdots$};    
    \draw [->] (c0) -- (x1);
    \draw [->] (c1) -- (c0);
    \draw [->] (ct1) -- (ct0);
    \draw [->] (ci1) -- (ci0);

    \node (c01) at (-4+1,-2+7)[label=right:$c_0'$]{$\circ$};
    \node (c11) at (-4+1,-2+6)[label=right:$c_1'$]{$\circ$};
    \node (vd1) at (-4+1,-2+5.5){$\vdots$};
    \node (ct01) at (-4+1,-2+4.5)[label=right:$c'_{t}$]{$\circ$};
    \node (ct11) at (-4+1,-2+3.5)[label=right:$c'_{t+1}$]{$\circ$};
    \node (ci01) at (-4+1,-2+2)[label=right:$c'_{t_0-1}$]{$\circ$};
    \node (ci11) at (-4+1,-2+1)[label=right:$c'_{t_0}$]{$\circ$};
    \node (vd51) at (-4+1,-2+3){$\vdots$};    
    \draw [->] (c01) -- (c0);
    \draw [->] (c11) -- (c1);
    \draw [->] (ct01) -- (ct0);
    \draw [->] (ct11) -- (ct1);
    \draw [->] (ci01) -- (ci0);
    \draw [->] (ci11) -- (ci1);
    \draw [->] (c01) -- (a1);
    \draw [->] (c11) -- (a2);
    \draw [->] (c11) -- (c01);
    \draw [->] (ct11) -- (ct01);
    \draw [->] (ci11) -- (ci01);
    \node (vd52) at (-2,4.5){$\vdots$};

    \node (tkl) at (5,5.5)[label=above:$\tup{t,k,l}_0$]{$\circ$};
    \node (tkl1) at (4,4.5)[label=below:$\tup{t,k,l}_1$]{$\circ$};
    \node (tkl2) at (6,4.5)[label=below:$\tup{t,k,l}_2$]{$\circ$};
    \node (tkl3) at (7,5.5)[label=above:$\tup{t,k,l}_3$]{$\circ$};
    \draw [->] (tkl1) -- (tkl);
    \draw [->] (tkl2) -- (tkl);
    \draw [->] (tkl2) -- (tkl3);
    \draw [->] (ct1) -- (tkl);
    \draw [->] (bk1) -- (tkl2);
    \draw [->] (al1) -- (tkl1);
    \node (e1) at (0,-4){};
    \node (e2) at (1.5,-4){};

    \node (end) at (-1,-5)[label=right:$e$]{$\circ$};
    \draw [->] (end) -- (e1);
    \draw [->] (end) -- (e2);
    \draw [->] (end) -- (ci11);

    \end{tikzpicture}
    \]
    \caption{The frame $\gf$}
    \label{fig:gf-S4t}
\end{figure}

The reader can readily check that $A$ is closed under Boolean operators, $R[\cdot]$ and $\inver{R}[\cdot]$, and so $\gf$ is well-defined. In fact, $\gf$ is a refined frame. 

Intuitively, $AB$ encodes the two counters of $\M$, $C$ encodes the states of $\M$, and points of the form $\tup{t,k,l}_i$ encode the configurations reachable from $\tup{s,n,m}$. The points $x_0, \dots, x_5$ are used to distinguish top points.

We note that every two points in $\gf$ are connected by an $\R$-path of length $\leq 7$. Thus, $\gf \md \axiom{br}_7$. This allows us to define so-called \emph{universal modalities} $\E$ and $\A$ on $\gf$ as follows. 

\begin{definition}
    For every formula $\phi$, let $\E\phi \coloneqq \is{7}\phi$ and $\A\phi \coloneqq \all{7}\phi$.
\end{definition}

Intuitively, $\gf, x \md \E \phi$ (resp. $\gf, x \md \A \phi$) iff $\gf, y \md \phi$ for some (resp. all) $y$. These modalities behave as desired on all rooted frames which validate $\axiom{br}_7$.

To simulate the computation of $\M$ in $\gf$, it is important to define subsets of $\gf$, in particular, singletons, by formulas uniformly for valuations. However, since $\gf$ is reflexive, it is impossible to take all valuations into account. Indeed, any variable-free formula is true everywhere or nowhere in $\gf$. Thus, we restrict to \emph{good} valuations, induced by a specific formula $\phi^*$. We will define subsets of $\gf$ by formulas uniformly for good valuations. In other words, many subsets of $\gf$ turn out to be uniformly definable ``relative to $\phi^*$''. 

\begin{definition} \label{def-good-val}
    Let $\phi^* \coloneq \lnot \axiom{bd}_2 \lor \lnot \axiom{bw}_2$, where
    \begin{align*}
        \axiom{bd}_2 &= \D(\B q_1 \wedge\neg (\D\B q_0\to q_0))\to q_1, \\
        \axiom{bw}_2 &= \bigwedge_{i\leq 2}\D p_i\to\bigvee_{i\neq j\leq 2}\D(p_i\wedge(p_j\vee\D p_j)). 
    \end{align*}
    A valuation $V$ on $\gf$ is called \emph{good} if $V(\phi^*) \neq \ve$.
\end{definition}

See Section~\ref{sec 2.1} for the meaning of the formulas $\axiom{bd}_2$ and $\axiom{bw}_2$. Now we introduce some formulas and show that they define subsets in $\gf$ uniformly for good valuations.

\begin{definition} \label{def:S4t-char-formulas-1}
    We define the following formulas:
    \begin{align*}
        \phi_{x_3} &\coloneq \B\bb\B\neg \phi^*, && \phi_{x_5} \coloneq \bb\B\bb\B\bb\neg\phi_{x_3}, && \phi_{x_0} \coloneq \B\neg\phi_{x_3} \wedge \bb\B\bb\neg\phi_{x_5},\\
        \phi_{x_1} &\coloneq \bd\phi_{x_0} \wedge \neg\phi_{x_0}, && \phi_{x_2} \coloneq \D\phi_{x_1} \wedge \D\phi_{x_3}, && \phi_{x_4} \coloneq \D\phi_{x_3} \wedge \bb\neg\phi_{x_2},\\
        \phi_{a_0} &\coloneq \bd\phi_{x_4} \wedge \B\neg\phi_{x_3}, && \phi_{b_0} \coloneq \bd\phi_{x_5} \wedge \neg\phi_{x_5}, && \phi_{b_1} \coloneq \D\phi_{b_0} \wedge \B\neg\phi_{a_0} \wedge \bb\neg\phi_{x_5},\\
        \phi_{AB} & \coloneq \mathrlap{\B\neg\phi_{x_1} \wedge \B\neg\phi_{x_5} \wedge (\D\phi_{b_0} \vee \phi_{a_0}).} 
    \end{align*}
\end{definition}


\begin{lemma} \label{lem:S4t-formula-location-1}
    Let $V$ be a good valuation on $\gf$. Then, the following hold:
    \begin{enumerate}
        \item $V(\phi_{u}) = \set{u}$ for each $u \in \set{x_i : i \leq 5} \cup \set{a_0,b_0,b_1}$;
        \item $V(\phi_{AB}) = AB = \set{a_i,b_i:i\in\omega}$.
    \end{enumerate}
\end{lemma}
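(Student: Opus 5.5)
The plan is to first pin down where $\phi^*$ can hold, then verify the formulas of Definition~\ref{def:S4t-char-formulas-1} one by one, in the order they are defined, each time using only singletons already established.

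\textbf{Step 1: the locus of $\phi^*$.} Let $D \subseteq W$ be the set of points $w$ at which $\phi^*$ is satisfiable, i.e.\ $\gf, w \nmd \axiom{bd}_2$ or $\gf, w \nmd \axiom{bw}_2$ at $w$ under some admissible valuation. Since $\gf$ is refined, reflexive and transitive, the local versions of the characterizations recalled in Section~\ref{sec 2.1} should give the following description: $w \in D$ iff $R[w]$ contains a strict chain of length $3$ or an antichain of size $3$. I will check this via the "only if" direction, which is all that is needed, since any valuation falsifying $\axiom{bd}_2$ or $\axiom{bw}_2$ at $w$ produces such a configuration. Two facts about $D$ then matter.
\begin{itemize}
\item $D$ is closed under $\inver{R}$: if $Rvw$ and $w\in D$, then $v\in D$.
\item $D$ avoids the "small" region $\set{x_0,\dots,x_5,a_0,b_0,a_1,b_1,c_0}$ and the points $\tup{t,k,l}_i$, while it contains $e$, all $a_i,b_i$ with $i\ge 2$, and the higher $c$'s.
\end{itemize}
For example, $R[x_2]=\set{x_2,x_1,x_3}$ and $R[x_4]=\set{x_4,x_3,a_0}$ have depth $2$ and width $2$, so neither lies in $D$. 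For a good valuation $V$ we then get $\ve\neq V(\phi^*)\subseteq D$.

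\textbf{Step 2: $V(\phi_{x_3})=\set{x_3}$.} Every point reachable from $x_3$ by a future–past–future path lies in $\set{x_1,x_2,x_3,x_4,a_0}$. None of these is in $D$, so $\phi_{x_3}$ holds at $x_3$ for every valuation. For $u\neq x_3$, I must show that every $d\in D$ is reachable from $u$ by an $R;\inver{R};R$ path, so that the fixed witness $d\in V(\phi^*)$ falsifies $\B\bb\B\neg\phi^*$ at $u$. Using that $D$ is closed under $\inver{R}$, it suffices to route everything through the point $e$, which sees $AB$ and $C$ and hence "everything below". A path of the form $u\,R\,y$, then $e\,R\,y$, then $e\,R\,d$ works whenever $u$ sees some point in $R[e]$. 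The points $u$ that do not, such as the $x_i$ and the $\tup{t,k,l}_i$, are handled by a short case check, going up to a top point and back down through $e$ or $c'_{t_0}$.

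\textbf{I expect Step 2 to be the main obstacle.} It requires an exhaustive check over all point types and a correct computation of $D$, in particular verifying that $D$ really excludes every point from which $\set{x_3}$'s $\B\bb\B$-neighbourhood is reachable.

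\textbf{Step 3: the remaining singletons and $AB$.} Once $\phi_{x_3}$ is settled, every later formula is built from previously characterized singletons with $\B,\bb,\D,\bd$ and Booleans, so its extension is computed by reading off $\gf$. The order is $x_5$, $x_0$, $x_1$, $x_2$, $x_4$, $a_0$, $b_0$, $b_1$. For instance:
\begin{itemize}
\item $\bb\B\bb\B\bb\neg\phi_{x_3}$ fails exactly where a $\inver{R};R;\inver{R};R;\inver{R}$ path reaches $x_3$. This is everywhere except $x_5$, whose component $\set{x_5,b_0}$ is far from $x_3$; I will count path lengths to confirm that $x_5$ is the unique exception.
\item $\phi_{b_1}$ excludes $a_1$ via $\B\neg\phi_{a_0}$, and excludes $x_5$ via $\bb\neg\phi_{x_5}$.
\end{itemize}
Finally, for $\phi_{AB}$ I will show three things. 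The condition $\B\neg\phi_{x_1}\wedge\B\neg\phi_{x_5}$ removes $x_0,x_1,x_2,x_5$, the $c$'s and $c'$'s (which see $x_1$), and $e$ (which sees $c_0$). Every $a_i$ either equals $a_0$ or sees $b_0$, and every $b_i$ sees $b_0$. The leftover candidates $x_3,x_4$ and the $\tup{t,k,l}_i$ satisfy neither $\D\phi_{b_0}$ nor $\phi_{a_0}$.
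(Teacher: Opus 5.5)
Your proposal is correct and is essentially the paper's proof: you bound where $\phi^*$ can hold via the strict-chain/antichain reading of $\neg\axiom{bd}_2\vee\neg\axiom{bw}_2$, and you use that $e$ sees every such point, so $\D\bd\D\phi^*$ holds everywhere except at $x_3$, whose $R;\inver{R};R$-neighbourhood is $\set{x_1,x_2,x_3,x_4,a_0}$; then you unfold the remaining formulas in order. One harmless slip: $a_1$ (resp.\ $b_1$) can satisfy $\phi^*$ when some reachable configuration has $k=0$ (resp.\ $l=0$), e.g.\ via the strict chain $a_1, \tup{t,0,l}_1, \tup{t,0,l}_0$, and the points $\tup{t,k,l}_i$ already lie in $R[e]$ and need no separate case, but your argument only ever uses $V(\phi^*)\subseteq R[e]$ and $V(\phi^*)\cap\set{x_1,x_2,x_3,x_4,a_0}=\ve$, which do hold.
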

\begin{proof}
    We begin by observing that $V(\phi^*) \cap (\set{x_i : i \leq 5} \cup \set{a_0,b_0}) = \ve$. Indeed, $\phi^*$ can be satisfied only at points $w$ such that $R[w]$ contains either a strict chain or an anti-chain of size greater than $2$, which holds for no point in $\set{x_i : i \leq 5} \cup \set{a_0,b_0}$.

    Let $V$ be a good valuation on $\gf$. By the above observation, we see that $e \in \inver{R}[V(\phi^*)]$ and $\inver{R}[V(\phi^*)] \cap (\set{x_i : i \leq 5} \cup \set{a_0,b_0}) = \ve$. Thus, $R[\inver{R}[V(\phi^*)]] = W \setminus \set{x_0,x_2,x_3,x_4,x_5}$, and $\inver{R}[R[\inver{R}[V(\phi^*)]]] = W \setminus \set{x_3}$. It follows that $V(\phi_{x_3}) = \set{x_3}$. The rest of the proof can be done similarly by unfolding the definitions of the formulas.
\end{proof}

The following lemma shows that there exist good valuations on $\gf$.

\begin{lemma} \label{lem:S4t-gf-refutes-phi*}
    $\gf \nmd \lnot \phi^*$.
\end{lemma}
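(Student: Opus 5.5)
We need to exhibit a valuation $V$ on $\gf$ (with values in $A$) and a point $w$ with $\gf,w\md\phi^*$, i.e., $w$ refutes $\axiom{bd}_2$ or $\axiom{bw}_2$. The plan is to use $\axiom{bw}_2$, refuting it at a point that sees three pairwise incomparable points.

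\textbf{Choosing the witness.} A natural candidate is $e$, since $R[e]$ is the upward closure of $\set{b_i : i\in\omega}\cup\set{c'_{t_0}}$ together with everything these reach. Note that $R$ here is the reflexive-transitive closure of the listed pairs, read as "arrows point to successors", and $\D$ looks along $R$.
\begin{itemize}
    \item From $c'_{t_0}$ one reaches $c_{t_0}$, the $c$-chain down to $c_0$, and then $x_1$.
    \item From $b_0$ one reaches nothing except $b_0$ itself, since $b_0$ has no outgoing arrows in the list (only $x_5\to b_0$ and $e\to b_0$ point into it).
    \item $x_1$ is a sink.
    \item $\tup{t,k,l}_3$ is reachable from $e$ only if $\Reach\neq\ve$, and $\Reach$ is nonempty since it contains $\tup{s,n,m}$. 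So $e$ reaches $\tup{s,n,m}_2$ via $b_{m+1}$, and hence also $\tup{s,n,m}_3$ and $\tup{s,n,m}_0$.
\end{itemize}
It is simpler, though, to take three sinks among the successors of $e$, namely $p_0$ true exactly at $x_1$, $p_1$ true exactly at $b_0$, and $p_2$ true exactly at $a_0$ ($a_0$ is reachable from $e$ via $b_2\to a_0$). Sinks here are the points with no other successor: $a_0$, $b_0$, $x_1$, and so on.

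\textbf{Admissibility and refutation.} Each of the three sets is a singleton, so its intersection with $AB$ is finite and it belongs to $A$. At $e$ each $\D p_i$ holds, since all three points lie in $R[e]$.
\begin{itemize}
    \item The consequent of $\axiom{bw}_2$ requires a point $u\in R[e]$ with $p_i$ true at $u$ and either $p_j$ true at $u$ or some successor of $u$ satisfying $p_j$, for some $i\neq j$.
    \item The $p_i$-points are single sinks, so the only $u$ satisfying $p_i$ is that sink. Its only successor is itself, and it does not satisfy $p_j$ for $j\neq i$.
\end{itemize}
Hence the consequent fails, $e\md\neg\axiom{bw}_2$, and so $V(\phi^*)\ni e$. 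This proves $\gf\nmd\neg\phi^*$.

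\textbf{Main obstacle.} The only delicate point is checking reachability and sinkhood in the closure $R$, in particular that $x_1$ is reachable from $e$ and that no extra arrows leave $a_0$, $b_0$ or $x_1$. One path from $e$ to $x_1$ is $e\to c'_{t_0}\to c_{t_0}\to\cdots\to c_0\to x_1$. For sinkhood, the listed pairs never have $a_0$, $b_0$ or $x_1$ as the first coordinate, apart from reflexivity.
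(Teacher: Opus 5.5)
Your argument is correct, but it takes a different route from the paper. The paper refutes $\axiom{bd}_2$, not $\axiom{bw}_2$. It sets $V(q_1)=\set{a_0,a_1,b_0}$ and $V(q_0)=\set{a_0}$, and uses the strict chain $a_2\,R\,a_1\,R\,a_0$ to get $\gf,V,a_2\nmd\axiom{bd}_2$. You instead refute $\axiom{bw}_2$ at $e$, using the three pairwise incomparable sinks $x_1,b_0,a_0\in R[e]$.

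\textbf{What each route buys.} Your version depends only on sinkhood and on reachability from $e$. Neither is affected by which configurations lie in $\Reach$. The cost is checking the path $e\,R\,c'_{t_0}\,R\,c_{t_0}\,R\cdots R\,c_0\,R\,x_1$, which you do.

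The paper's version is more local. However, with the valuation exactly as stated it tacitly needs $R[a_1]=\set{a_0,a_1,b_0}$, so that $a_1\md\B q_1$. If some $\tup{t,0,l}\in\Reach$, then $a_1\,R\,\tup{t,0,l}_1$ and this fails. Neither $a_0$ (which satisfies $q_0$) nor $b_0$ (which refutes $\D\B q_0$) can replace $a_1$ as the witness. The repair is to take $V(q_1)=R[a_1]$, which is still in $A$ because its trace on $AB$ is the finite set $\set{a_0,a_1,b_0}$.

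\textbf{A harmless slip.} In your write-up, $b_0$ also receives arrows from $a_1$ and $b_1$, not only from $x_5$ and $e$. Only the absence of outgoing arrows matters, and that claim is right.
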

\begin{proof}
    Let $V$ be a valuation on $\gf$ such that $V(q_1) = \set{a_0,a_1,b_0}$ and $V(q_0) = \set{a_0}$. Then, we see that $\gf,V,a_2 \not\md \axiom{bd}_2$, which entails $\gf,V,a_2 \md \phi^*$, and so $\gf \nmd \lnot \phi^*$.
\end{proof}


\subsection{Minsky machine simulation} \label{subsec-2}

This is the key part of the proof. We will define formulas that simulate the computation of $\M$ not only syntactically, but also semantically on $\gf$. 

The infinite ``ladder'' $AB$ will play an important role. Let $\phi_{AB}[q'/\phi^*]$ be the formula obtained from $\phi_{AB}$ by replacing $\phi^*$ with $q'$. In other words, $\phi_{AB}[q'/\phi^*]$ is defined the same way as in Definition~\ref{def:S4t-char-formulas-1} but starting with $\phi_{x_3}[q'/\phi^*] = \B\bb\B\neg q'$. Recall that in the proof of Lemma~\ref{lem:S4t-formula-location-1}, we only used the assumption that $V(\phi^*) \neq \ve$ and the observation that $V(\phi^*) \cap (\set{x_i : i \leq 5} \cup \set{a_0,b_0}) = \ve$. Thus, we obtain a similar lemma for $\phi_{AB}[q'/\phi^*]$. 

\begin{lemma} \label{lem:S4t-formula-location-phiABq'}
    Let $V$ be a valuation on $\gf$. Suppose $V(q') \neq \ve$ and $V(q') \cap (\set{x_i : i \leq 5} \cup \set{a_0,b_0}) = \ve$. Then, $V(\phi_{AB}[q'/\phi^*]) = \set{a_i,b_i:i\in\omega}$.
\end{lemma}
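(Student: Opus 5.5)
The plan is to rerun the proof of Lemma~\ref{lem:S4t-formula-location-1} with $q'$ in place of $\phi^*$. The key point is that that proof used only two facts about $V(\phi^*)$: it is non-empty, and it avoids $X_0 \coloneqq \set{x_i : i \leq 5} \cup \set{a_0,b_0}$. Here $V(q')$ satisfies both by hypothesis. Every formula in Definition~\ref{def:S4t-char-formulas-1} is built from $\phi^*$ only through $\phi_{x_3}$, so after the substitution every formula depends on $V$ only through the set $Q \coloneqq V(q')$. I will therefore compute the extensions purely set-theoretically, using $R[\cdot]$ and $\inver{R}[\cdot]$ on the Kripke frame underlying $\gf$.

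\textbf{Step 1: $\phi_{x_3}[q'/\phi^*]$ defines $\set{x_3}$.} Since $Q \neq \ve$ and $Q \cap X_0 = \ve$, I claim $e \in \inver{R}[Q]$ and $\inver{R}[Q] \cap X_0 = \ve$. The first holds because $e$ sees every point outside $\set{x_i : i\le 5}\cup\set{a_0,b_0}$ that could lie in $Q$: $e$ reaches all of $AB$, all of $C$, every $\tup{t,k,l}_i$, and $x_1$ (via $c_0$). Points of $\set{x_i}$ and $a_0,b_0$ are excluded from $Q$ anyway. The second holds because the points of $X_0$ see only points of $X_0$: $x_4$ sees $a_0,x_3$; $x_5$ sees $b_0$; $x_2$ sees $x_1,x_3$; $x_0$ sees $x_1$; and $a_0,b_0$ are maximal. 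Consequently
\[R[\inver{R}[Q]] = W\setminus\set{x_0,x_2,x_3,x_4,x_5} \quad\text{and}\quad \inver{R}[R[\inver{R}[Q]]] = W\setminus\set{x_3}.\]
The first equality follows from $R[e]$ plus the downward closure of everything $e$ sees. For the second, every point sees something in that set, except $x_3$, which sees only itself. Hence $V(\B\bb\B\neg q') = \set{x_3}$.

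\textbf{Step 2: propagate through the remaining formulas.} Each subsequent formula is defined from $\phi_{x_3}$ and its predecessors alone, with no further occurrence of $q'$. So its extension is obtained by the same unfolding computation as in Lemma~\ref{lem:S4t-formula-location-1}. In order, this yields $\set{x_5}$, $\set{x_0}$, $\set{x_1}$, $\set{x_2}$, $\set{x_4}$, $\set{a_0}$, $\set{b_0}$, and finally
\[V(\phi_{AB}[q'/\phi^*]) = \set{a_i,b_i : i\in\omega}.\]
The last value is checked as follows. The points seeing neither $x_1$ nor $x_5$ (with reflexivity) that either satisfy $\D\phi_{b_0}$ or are $a_0$ are exactly the $a_i,b_i$. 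The other points that see $b_0$, namely $e$ and the $c'_i$, also see $x_1$ via $c_0$; and $x_5$ is itself excluded.

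\textbf{Main obstacle.} The only genuinely new content is Step 1, since Step 2 is literally identical to the earlier proof once $V(\phi_{x_3}[q'/\phi^*])=\set{x_3}$ is established. Within Step 1, the delicate point is $e\in\inver{R}[Q]$: one must verify that $e$ sees every point outside $X_0$. In particular $x_1$ must be reachable (via $c'_{t_0}\to\cdots\to c_0\to x_1$), which matters only if $Q$ contains $x_1$. I would check this reachability carefully against the generating relations.
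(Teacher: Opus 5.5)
Your proposal is correct and follows the paper's own argument, which simply notes that the proof of Lemma~\ref{lem:S4t-formula-location-1} uses only $V(\phi^*)\neq\ve$ and $V(\phi^*)\cap(\set{x_i : i\leq 5}\cup\set{a_0,b_0})=\ve$; your explicit unfolding checks out, including $R[\inver{R}[Q]]=W\setminus\set{x_0,x_2,x_3,x_4,x_5}$ and the subsequent values. One small correction to your closing remark: $x_1$ can never lie in $Q$, but $e$ seeing $x_1$ is still essential, because it places $x_1$ in $R[\inver{R}[Q]]$ and so keeps $x_0$ and $x_2$ out of $V(\phi_{x_3}[q'/\phi^*])$.
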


We will use ``triangles'' in $AB$, such as $\set{a_i, b_i, b_{i+1}}$, to represent numbers in the two counters of $\M$. The three formulas in the following definition move such a triangle one step below, simulating the increment of a counter. See Figure~\ref{fig:formula-tau} for an illustration. While Lemma~\ref{lem:formula-tau}, as shown in Figure~\ref{fig:formula-tau}, concerns two types of triangles, only the first type will appear in our simulation of $\M$.

\begin{definition}\label{def:formula-tau}
    Let $\overline{q}=(q, q', q'')$ be a sequence of propositional variables. We define the following formulas:
    \begin{align*}
        \tau(\overline{q}) \coloneq~ & q', \\
        \tau'(\overline{q}) \coloneq~ & \phi_{AB}[q'/\phi^*] \wedge \D q' \wedge \D q'' \wedge \B\neg(\D q \wedge \D q'' \wedge \B\neg q') \\
        & \wedge \A(q \vee q' \vee q'' \to \phi_{AB}[q'/\phi^*]), \\
        \tau''(\overline{q}) \coloneq~ & \phi_{AB}[q'/\phi^*] \wedge \neg q'' \wedge \D(q \wedge \bb\neg q'' \wedge \bd(q' \wedge \B\neg q'' \wedge \neg q)) \\
        &\wedge \D(q'' \wedge \bb\neg q' \wedge \B\neg q \wedge \bb\neg q) \wedge \B\neg q'.
    \end{align*}
\end{definition}

\begin{lemma}\label{lem:formula-tau}
    Let $V$ be a valuation on $\gf$. Then for all $i \in \omega$, 
    \begin{enumerate}
        \item The following are equivalent: 
        \begin{enumerate}
            \item[(i)] $\tup{V(q),V(q'),V(q'')} = \tup{\{b_i\}, \{b_{i+1}\}, \{a_i\}}$,
            \item[(ii)] $\tup{V(\tau),V(\tau'),V(\tau'')} = \tup{\{b_{i+1}\}, \{b_{i+2}\}, \{a_{i+1}\}}$;
        \end{enumerate}
        \item The following are equivalent: 
        \begin{enumerate}
            \item[(i)] $\tup{V(q),V(q'),V(q'')} = \tup{\{a_i\}, \{a_{i+1}\}, \{b_{i+1}\}}$,
            \item[(ii)] $\tup{V(\tau),V(\tau'),V(\tau'')} = \tup{\{a_{i+1}\}, \{a_{i+2}\}, \{b_{i+2}\}}$.
        \end{enumerate}
    \end{enumerate}
\end{lemma}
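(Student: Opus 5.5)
The proof is a direct semantic computation in $\gf$. I will handle item (1) in detail; item (2) is the symmetric computation with the roles of the $a$- and $b$-columns exchanged. Throughout I use the following facts about $R$ restricted to $AB$, read off from the generating edges and transitivity:
\begin{itemize}
    \item $R[a_{j+1}]$-successors: $a_{j+1}$ sees exactly $a_{j+1}$, all $a_{j'}$ and $b_{j'}$ with $j'\le j$, plus the non-$AB$ points above $a_0,b_0$ (namely $x_3$ via $x_4$? no: edges point into $a_0$, so nothing above).
    \item $b_{j+1}$ sees $b_{j+1}$, all $b_{j'}$ with $j'\le j$, and all $a_{j'}$ with $j'\le j-1$; in particular $b_{j+1}$ does not see $a_j$.
    \item The only points outside $AB$ seeing into $AB$ are $c'_i$, $e$, $x_4$, $x_5$; the only points of $AB$ seeing outside $AB$ are the $a_{k+1},b_{l+1}$ seeing configuration points $\tup{t,k,l}_1,\tup{t,k,l}_2$.
\end{itemize}

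\textbf{(i)$\Rightarrow$(ii).} Assume $V(q)=\set{b_i}$, $V(q')=\set{b_{i+1}}$, $V(q'')=\set{a_i}$. Since $b_{i+1}\notin\set{x_j: j\le 5}\cup\set{a_0,b_0}$, Lemma~\ref{lem:S4t-formula-location-phiABq'} gives $V(\phi_{AB}[q'/\phi^*])=AB$. Immediately $V(\tau)=V(q')=\set{b_{i+1}}$.

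For $\tau'$, the $\A$-conjunct holds everywhere because $q,q',q''$ are true only in $AB$. So $V(\tau')$ consists of the points of $AB$ that see both $b_{i+1}$ and $a_i$ and see no point $w$ seeing $b_i$ and $a_i$ but not $b_{i+1}$. The $AB$-points seeing both $b_{i+1}$ and $a_i$ are $b_{i+2}$, $a_{i+2}$, and everything below them. Every such point other than $b_{i+2}$ sees $a_{i+1}$, which is such a bad witness $w$. On the other hand, the successors of $b_{i+2}$ that see $a_i$ are only $b_{i+2}$ itself, and $b_{i+1}$ does not see $a_i$. Hence $V(\tau')=\set{b_{i+2}}$.

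For $\tau''$ I unfold the conjuncts at each point of $AB$. The point must:
\begin{itemize}
    \item not satisfy $q''$, and see no $q'$-point;
    \item see a $q$-point with no $q''$-point in its past that has a predecessor satisfying $q'\wedge\B\neg q''\wedge\neg q$;
    \item see a $q''$-point with no $q'$ or $q$ in its past and no $q$ in its future.
\end{itemize}
The condition $\B\neg q'$ excludes everything at or below $b_{i+1}$ and $a_{i+2}$. Checking the few remaining candidates near level $i$ leaves exactly $a_{i+1}$.

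\textbf{(ii)$\Rightarrow$(i).} From $V(\tau)=\set{b_{i+1}}$ we get $V(q')=\set{b_{i+1}}$ outright. Then Lemma~\ref{lem:S4t-formula-location-phiABq'} applies again, so $\phi_{AB}[q'/\phi^*]$ defines $AB$. Since $V(\tau')\neq\ve$, the $\A$-conjunct of $\tau'$ is true somewhere, hence everywhere by the universal reading of $\A$ on $\gf\md\axiom{br}_7$. This forces $V(q)\cup V(q'')\subseteq AB$.

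Now I use $a_{i+1}\in V(\tau'')$ to pin down $q$ and $q''$:
\begin{itemize}
    \item The conjunct $\D(q''\wedge\bb\neg q'\wedge\ldots)$ supplies a $q''$-point among the successors of $a_{i+1}$ that does not lie below $b_{i+1}$. Combined with $b_{i+2}\in V(\tau')$, which requires $b_{i+2}$ to see a $q''$-point, the only common candidate is $a_i$.
    \item Similarly, the $\D(q\wedge\ldots\wedge\bd(q'\wedge\ldots))$ conjunct supplies a $q$-point seen by $a_{i+1}$ with $b_{i+1}$ in its past; the only candidate is $b_i$.
\end{itemize}

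It then remains to rule out any \emph{additional} $q$- or $q''$-points. This is where the exact equalities $V(\tau')=\set{b_{i+2}}$ and $V(\tau'')=\set{a_{i+1}}$ (not just membership) are needed: an extra $q''$-point or $q$-point would either make some other point satisfy $\tau''$ (for instance $\neg q''$ would fail at a $q''$-point, or a new witness would appear), or destroy one of the witnessing conjuncts at $a_{i+1}$ or $b_{i+2}$. I expect this exhaustive elimination to be the main obstacle. It requires a careful case split on where an extra point could sit relative to level $i$: above $i$, at level $i$, or below $i+1$. In each case I would exhibit the violated conjunct of $\tau'$ or $\tau''$, or a point of $AB$ that wrongly satisfies one of them.
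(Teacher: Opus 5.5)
\textbf{There is a genuine gap, in (ii)$\Rightarrow$(i).} Your direction (i)$\Rightarrow$(ii) is fine and is essentially the ``straightforward verification'' the paper leaves to the reader. Slips such as forgetting that $a_i$ is itself a successor of $b_{i+2}$ that sees $a_i$, or that points of $AB$ also see configuration points, are harmless. In (ii)$\Rightarrow$(i), however, the uniqueness of the $q$- and $q''$-points is the real content of the direction, and you only announce it as an exhaustive case analysis. Your diagnosis that it needs the exact equalities $V(\tau')=\{b_{i+2}\}$ and $V(\tau'')=\{a_{i+1}\}$ is also mistaken.

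The missing idea, which is how the paper argues, is that the negative conjuncts inside the two $\D$-witnesses at $a_{i+1}$ already confine $V(q)$ and $V(q'')$. Only $a_{i+1}\in V(\tau'')$, $V(q')=\{b_{i+1}\}$, and $V(\tau')\neq\varnothing$ (for the $\A$-conjunct) are used. Concretely:
\begin{enumerate}
\item In $\D(q\wedge\bb\neg q''\wedge\bd(q'\wedge\B\neg q''\wedge\neg q))$ the $q'$-point must be $b_{i+1}$, so $b_{i+1}\models\B\neg q''\wedge\neg q$.
\item The witness $v$ of $\D(q''\wedge\bb\neg q'\wedge\B\neg q\wedge\bb\neg q)$ lies in $R[a_{i+1}]\cap AB$ but not in $R[b_{i+1}]$. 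Also $v\neq a_{i+1}$ by the conjunct $\neg q''$. Hence $v=a_i$.
\item Now $a_i\models\B\neg q\wedge\bb\neg q$, the only points of $AB$ incomparable with $a_i$ are $b_i$ and $b_{i+1}$, and $b_{i+1}\models\neg q$. So $V(q)\subseteq\{b_i\}$, hence $V(q)=\{b_i\}$ because the first witness exists.
\item That witness is therefore $b_i$, so $b_i\models\bb\neg q''$. Together with $b_{i+1}\models\B\neg q''$, this removes every point of $AB$ except $a_i$ from $V(q'')$.
\end{enumerate}
No case split on extra points making other points satisfy $\tau'$ or $\tau''$ is needed.

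\textbf{Two of your existence steps are also unjustified as written.}
\begin{enumerate}
\item For the $q''$-witness, $\bb\neg q'$ says $v\notin R[b_{i+1}]$, i.e.\ $v$ is not a successor of $b_{i+1}$; it does not say that $v$ fails to lie below $b_{i+1}$. This leaves the candidates $a_i$ and $a_{i+1}$. The fact that $b_{i+2}$ sees \emph{some} $q''$-point says nothing about this particular witness, so it cannot exclude $a_{i+1}$. You need $\neg q''$ at $a_{i+1}$, or $\B\neg q$ at $v$ against $\D q$ at $a_{i+1}$.
\item For the $q$-witness, being seen by both $a_{i+1}$ and $b_{i+1}$ leaves $b_0,\dots,b_i,a_0,\dots,a_{i-1}$, not only $b_i$. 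Pinning it down requires the conjunct $\bb\neg q''$ together with $a_i\in V(q'')$, or simply the inclusion $V(q)\subseteq\{b_i\}$ obtained above.
\end{enumerate}
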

\begin{proof}
    We only prove (1); a proof of (2) can be obtained analogously. Suppose that (i) holds. Then $V(\tau) = V(q') = \set{b_{i+1}}$. It follows from Lemma~\ref{lem:S4t-formula-location-phiABq'} that $V(\phi_{AB}[q'/\phi^*]) = \set{a_i, b_i: i \in \omega}$. Using this, it is straightforward to verify that $V(\tau'') = \{a_{i+1}\}$ and $V(\tau') = \set{b_{i+2}}$.
    
    Conversely, suppose that (ii) holds. Then $V(q') = V(\tau) = \set{b_{i+1}}$. It follows from Lemma~\ref{lem:S4t-formula-location-phiABq'} that $V(\phi_{AB}[q'/\phi^*]) = \set{a_i, b_i: i \in \omega}$. Since $\gf,V,b_{i+2} \md \A(q \vee q' \vee q'' \to \phi_{AB}[q'/\phi^*])$, we have $V(q \vee q' \vee q'') \sub \AB$. Since $\gf,V,a_{i+1} \md \D(q \wedge \bb\neg q'' \wedge \bd(q' \wedge \B\neg q'' \wedge \neg q))$, there exists some $u\in R[a_{i+1}]$ such that $\gf,V,u \md q \wedge \bb\neg q'' \wedge \bd(q' \wedge \B\neg q'' \wedge \neg q)$, which entails  $\gf,V,b_{i+1} \md \B\neg q'' \wedge \neg q$. Since $\gf,V,a_{i+1} \md \D(q'' \wedge \bb\neg q' \wedge \B\neg q \wedge \bb\neg q) \wedge \neg q''$, there exists some $v \in R[a_{i+1}]$ such that $v \neq a_{i+1}$ and $\gf,V,v \md q'' \wedge \bb\neg q' \wedge \B\neg q \wedge \bb\neg q$. Then, $\gf,V,b_{i+1} \md \B\neg q''$ and $v \in V(q'') \sub \AB$. Thus, we have $v \not\in R[b_{i+1}]$, and so $v = a_{i}$. Since $\gf,V,a_{i} \md \B\neg q \wedge \bb\neg q$ and $\gf,V,b_{i+1} \md \neg q$, we have $V(q) = V(q) \cap AB = \set{b_{i}}$. Finally, by $\gf,V,b_{i+1} \md \B\neg q''$ and $\gf,V,b_{i} \md \bb\neg q''$, we have $V(q'') \cap AB  = \set{a_{i}}$.
\end{proof}

\begin{figure}[ht]
\[
    \begin{tikzpicture}[scale=1.2]

        \begin{scope}[shift={(0,0)}]
            \node (vd1) at (.75,3.4){$\vdots$};
            \node (vd2) at (0,3.4){$\vdots$};
            \node (vd3) at (1.5,3.4){$\vdots$};

            \node (al0) at (0,2.5)[label=left:$a_{i}$]{$\circ$};
            \node (al1) at (0,1.5)[label=left:$a_{i+1}$]{$\circ$};
            \node (al2) at (0,0.5)[label=left:$a_{i+2}$]{$\circ$};
            \node (bl0) at (1.5,2.5)[label=right:$b_{i}$]{$\circ$};
            \node (bl1) at (1.5,1.5)[label=right:$b_{i+1}$]{$\circ$};
            \node (bl2) at (1.50,0.5)[label=right:$b_{i+2}$]{$\circ$};

            \node at (-1.5,2.5) {$q''$};
            \node at (3,2.5) {$q$};
            \node at (3,1.5) {$q'$};

            \draw [->] (al1) -- (al0);
            \draw [->] (al2) -- (al1);
            \draw [->] (bl1) -- (bl0);
            \draw [->] (bl2) -- (bl1);
            \draw [->] (al1) -- (bl0);
            \draw [->] (al2) -- (bl1);
            \draw [->] (bl2) -- (al0);
            \node (vd4) at (0,0){$\vdots$};
            \node (vd5) at (.75,0){$\vdots$};
            \node (vd6) at (1.5,0){$\vdots$};

            \node at (4.5,1.5) {$\Longleftrightarrow$};
        \end{scope}

        \begin{scope}[shift={(7,0)}]
            \node (vd1) at (.75,3.4){$\vdots$};
            \node (vd2) at (0,3.4){$\vdots$};
            \node (vd3) at (1.5,3.4){$\vdots$};

            \node (al0) at (0,2.5)[label=left:$a_{i}$]{$\circ$};
            \node (al1) at (0,1.5)[label=left:$a_{i+1}$]{$\circ$};
            \node (al2) at (0,0.5)[label=left:$a_{i+2}$]{$\circ$};
            \node (bl0) at (1.5,2.5)[label=right:$b_{i}$]{$\circ$};
            \node (bl1) at (1.5,1.5)[label=right:$b_{i+1}$]{$\circ$};
            \node (bl2) at (1.50,0.5)[label=right:$b_{i+2}$]{$\circ$};

            \node at (-1.5,1.5) {$\tau''$};
            \node at (3,1.5) {$\tau$};
            \node at (3,.5) {$\tau'$};

            \draw [->] (al1) -- (al0);
            \draw [->] (al2) -- (al1);
            \draw [->] (bl1) -- (bl0);
            \draw [->] (bl2) -- (bl1);
            \draw [->] (al1) -- (bl0);
            \draw [->] (al2) -- (bl1);
            \draw [->] (bl2) -- (al0);
            \node (vd4) at (0,0){$\vdots$};
            \node (vd5) at (.75,0){$\vdots$};
            \node (vd6) at (1.5,0){$\vdots$};
        \end{scope}

        \begin{scope}[shift={(0,-5)}]
            \node (vd1) at (.75,3.4){$\vdots$};
            \node (vd2) at (0,3.4){$\vdots$};
            \node (vd3) at (1.5,3.4){$\vdots$};

            \node (al0) at (0,2.5)[label=left:$a_{i}$]{$\circ$};
            \node (al1) at (0,1.5)[label=left:$a_{i+1}$]{$\circ$};
            \node (al2) at (0,0.5)[label=left:$a_{i+2}$]{$\circ$};
            \node (bl0) at (1.5,2.5)[label=right:$b_{i}$]{$\circ$};
            \node (bl1) at (1.5,1.5)[label=right:$b_{i+1}$]{$\circ$};
            \node (bl2) at (1.50,0.5)[label=right:$b_{i+2}$]{$\circ$};

            \node at (-1.5,2.5) {$q$};
            \node at (-1.5,1.5) {$q'$};
            \node at (3,1.5) {$q''$};

            \draw [->] (al1) -- (al0);
            \draw [->] (al2) -- (al1);
            \draw [->] (bl1) -- (bl0);
            \draw [->] (bl2) -- (bl1);
            \draw [->] (al1) -- (bl0);
            \draw [->] (al2) -- (bl1);
            \draw [->] (bl2) -- (al0);
            \node (vd4) at (0,0){$\vdots$};
            \node (vd5) at (.75,0){$\vdots$};
            \node (vd6) at (1.5,0){$\vdots$};

            \node at (4.5,1.5) {$\Longleftrightarrow$};
        \end{scope}

        \begin{scope}[shift={(7,-5)}]
            \node (vd1) at (.75,3.4){$\vdots$};
            \node (vd2) at (0,3.4){$\vdots$};
            \node (vd3) at (1.5,3.4){$\vdots$};

            \node (al0) at (0,2.5)[label=left:$a_{i}$]{$\circ$};
            \node (al1) at (0,1.5)[label=left:$a_{i+1}$]{$\circ$};
            \node (al2) at (0,0.5)[label=left:$a_{i+2}$]{$\circ$};
            \node (bl0) at (1.5,2.5)[label=right:$b_{i}$]{$\circ$};
            \node (bl1) at (1.5,1.5)[label=right:$b_{i+1}$]{$\circ$};
            \node (bl2) at (1.50,0.5)[label=right:$b_{i+2}$]{$\circ$};

            \node at (-1.5,1.5) {$\tau$};
            \node at (-1.5,.5) {$\tau'$};
            \node at (3,.5) {$\tau''$};

            \draw [->] (al1) -- (al0);
            \draw [->] (al2) -- (al1);
            \draw [->] (bl1) -- (bl0);
            \draw [->] (bl2) -- (bl1);
            \draw [->] (al1) -- (bl0);
            \draw [->] (al2) -- (bl1);
            \draw [->] (bl2) -- (al0);
            \node (vd4) at (0,0){$\vdots$};
            \node (vd5) at (.75,0){$\vdots$};
            \node (vd6) at (1.5,0){$\vdots$};
        \end{scope}
    \end{tikzpicture}
\]
\caption{Illustration of Lemma~\ref{lem:formula-tau}}
\label{fig:formula-tau}
\end{figure}

Using the formulas introduced in Definition~\ref{def:formula-tau}, we can define more subsets of $\gf$.

\begin{definition}\label{def:formula-aibi}
Recall $\phi_{b_0}$, $\phi_{b_1}$, and $\phi_{a_0}$ defined in Definition~\ref{def:S4t-char-formulas-1}. For each $i \geq 1$, we define 
\begin{align*}
    \phi_{b_{i+1}} \coloneq & \tau'(\phi_{b_{i-1}}, \phi_{b_{i}}, \phi_{a_{i-1}}), \\
    \phi_{a_i} \coloneq & \tau''(\phi_{b_{i-1}}, \phi_{b_i}, \phi_{a_{i-1}}).
\end{align*}
Moreover, we define the following formulas:
\begin{align*}
    \phi_{c'_{i}} &\coloneq \neg\phi_{AB} \wedge \D(\phi_{a_{i+1}} \wedge \phi_{AB}) \wedge \B\neg(\phi_{a_{i+2}} \wedge \phi_{AB}), \text{ for all } i \leq t_0, \\
    \phi_{c_0} &\coloneq \bd\phi_{c'_{0}} \wedge \neg\phi_{c'_{0}} \wedge \D\phi_{x_1} \wedge \neg\phi_{x_1}, \\
    \phi_{c_{i}} &\coloneq \D\phi_{c_0} \wedge \bd\phi_{c'_{i}} \wedge \bb\neg\phi_{c'_{i-1}} \wedge \neg\phi_{c'_i}, \text{ for all } 1\leq i\leq t_0, \\
    \phi_{R0} &\coloneq \bd\phi_{c_{t_0}} \wedge \bd\phi_{AB}, \\
    \phi_{R1} &\coloneq \B\D\phi_{R0} \wedge \neg\phi_{R0}, \\
    \phi_{R2} &\coloneq \D\phi_{R0} \wedge \neg\phi_{R0} \wedge \neg\phi_{R1} \wedge \B\neg\phi_{x_1} \wedge \neg\phi_{AB}, \\ 
    \phi_{R3} &\coloneq \bd\phi_{R2} \wedge \neg\phi_{R0} \wedge \neg\phi_{R2}.
\end{align*}
\end{definition}

\begin{lemma} \label{lem:S4t-formula-location-2}
    Let $V$ be a good valuation on $\gf$. Then, 
    \begin{enumerate}
        \item for all $u \in \set{a_i, b_i : i \in \omega}$, $V(\phi_{u}) = \set{u}$;
        \item for all $u \in \set{c_i, c'_i : i \leq t_0}$, $V(\phi_{u}) = \set{u}$;
        \item $V(\phi_{Ri}) = \set{z_i: z \in \Reach}$ for all $i \leq 3$.
    \end{enumerate}
\end{lemma}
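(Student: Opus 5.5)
Fix a good valuation $V$. The three parts are handled in order, each using what the previous one established.

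\textbf{Part (1).} I will argue by strong induction on $i$. The claim to prove at stage $i \geq 1$ is
\[
V(\phi_{b_{i-1}}) = \set{b_{i-1}}, \quad V(\phi_{b_i}) = \set{b_i}, \quad V(\phi_{a_{i-1}}) = \set{a_{i-1}}.
\]
The base case $i=1$ is exactly Lemma~\ref{lem:S4t-formula-location-1}(1), which gives $V(\phi_{b_0}) = \set{b_0}$, $V(\phi_{b_1}) = \set{b_1}$ and $V(\phi_{a_0}) = \set{a_0}$.

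For the inductive step, define a valuation $V'$ by $V'(q) = V(\phi_{b_{i-1}})$, $V'(q') = V(\phi_{b_i})$ and $V'(q'') = V(\phi_{a_{i-1}})$. By hypothesis this is the triple $\tup{\set{b_{i-1}},\set{b_i},\set{a_{i-1}}}$. I will apply Lemma~\ref{lem:formula-tau}(1), direction (i)$\Rightarrow$(ii), with $i-1$ in place of $i$. That lemma is stated for an arbitrary valuation, and it internally uses Lemma~\ref{lem:S4t-formula-location-phiABq'}. The hypotheses of the latter hold here: $V'(q') = \set{b_i}$ is nonempty, and it is disjoint from $\set{x_j : j \leq 5} \cup \set{a_0,b_0}$ because $i \geq 1$.

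Substitution commutes with evaluation, so
\[
V(\phi_{b_{i+1}}) = V'(\tau') = \set{b_{i+1}} \quad\text{and}\quad V(\phi_{a_i}) = V'(\tau'') = \set{a_i}.
\]
This gives the triple for stage $i+1$. One bookkeeping point needs checking: $\phi_{AB}[q'/\phi^*]$ inside $\tau'$ and $\tau''$ becomes $\phi_{AB}[\phi_{b_i}/\phi^*]$. Its evaluation is handled by Lemma~\ref{lem:S4t-formula-location-phiABq'} applied to $V'$, not by goodness.

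\textbf{Part (2).} This is a direct computation in $\gf$ using part (1) and Lemma~\ref{lem:S4t-formula-location-1}.

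\emph{The points $c'_i$.} The set $V(\neg\phi_{AB})$ is $W\setminus AB$. The set $V(\D(\phi_{a_{i+1}}\wedge\phi_{AB}))$ is the set of points that see $a_{i+1}$. Among the points outside $AB$, these are $c'_j$ for $j\leq i$ and $c_j$ for $j \leq t_0$ (via $c'$? no — only the $c'$ points and $e$ see the $a$'s). I need to check the frame carefully here: each $c'_j$ sees $c'_{j-1}, \dots, c'_0$, and hence sees $a_1, \dots, a_{j+1}$ and everything below them. The point $e$ sees everything in $AB$, since $b_{i+2}$ sees $a_i$. The clause $\B\neg(\phi_{a_{i+2}}\wedge\phi_{AB})$ removes every point that sees $a_{i+2}$. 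This excludes $e$ and all $c'_j$ with $j > i$. Among the $c'_j$ with $j \leq i$, only $c'_i$ fails to see $a_{i+2}$; the $c'_j$ with $j<i$ also fail to see it, so they survive this clause too. Consequently the formula as written picks out exactly those $c'_j$ that see $a_{i+1}$ but not $a_{i+2}$. Since $c'_j$ sees $a_{j+1}$ and all $a_k$ with $k\ge j+1$ (as $a_{j+1}$ sees $a_k$ for $k \leq j+1$), the direction of the $a$-chain must be pinned down. The edges go $a_{i+1}\to a_i$, so $a_{j+1}$ sees $a_0,\dots,a_{j+1}$. Hence $c'_j$ sees $a_{i+2}$ iff $j \geq i+1$, and sees $a_{i+1}$ iff $j\geq i$. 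So exactly $c'_i$ remains.

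\emph{The points $c_0$ and $c_i$.} Similarly, $c_0$ is the unique point that sees $x_1$, is seen by $c'_0$, and is neither $c'_0$ nor $x_1$. Each $c_i$ is the unique point seeing $c_0$, seen by $c'_i$, not seen by $c'_{i-1}$, and distinct from $c'_i$.

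\textbf{Part (3).} This is a similar computation.
\begin{itemize}
\item $\phi_{R0}$ holds at the points below both $c_{t_0}$ and some point of $AB$. These are the $z_0$ (each seen by $c_{t+1}$, which is below $c_{t_0}$, and by $z_1$, which is below $a_{k+1}$). The points $c_j$ themselves are not seen by $AB$.
\item $\phi_{R1}$ then yields the $z_1$, since every successor of $z_1$ is $z_1$ or $z_0$.
\item $\phi_{R2}$ then yields the $z_2$. The clause $\neg\phi_{AB}$ and the $x_1$-clause exclude the $AB$ points, the $c_j$ and $c'_j$, and $e$.
\item $\phi_{R3}$ then yields the $z_3$.
\end{itemize}

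\textbf{Main obstacle.} I expect the hardest step to be the exclusions in parts (2) and (3): verifying via the transitive closure that no stray point ($e$, $x_j$, or a $c$-point) satisfies these formulas.
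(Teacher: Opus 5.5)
Your proposal is correct and follows the paper's route: (1) by induction from Lemma~\ref{lem:S4t-formula-location-1} and Lemma~\ref{lem:formula-tau}, including your check that the substituted $q'$ meets the hypotheses of Lemma~\ref{lem:S4t-formula-location-phiABq'}, which the paper leaves implicit; and (2)--(3) by direct computation in $\gf$. Do clean up the $c'_i$ paragraph, since its intermediate claims (that the $c'_j$ with $j\le i$ see $a_{i+1}$, and that $c'_j$ sees $a_k$ for $k\ge j+1$) are false and contradict your final, correct characterization: $c'_j$ sees $a_{i+1}$ iff $j\ge i$, and sees $a_{i+2}$ iff $j\ge i+1$.
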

\begin{proof}
    (1) follows from Lemmas~\ref{lem:S4t-formula-location-1} and \ref{lem:formula-tau} immediately. Using (1), it is straightforward to check (2) and (3).
\end{proof}

To simulate the transition of configurations of $\M$, we need a formula scheme that, under suitable substitutions, characterizes the points of the form $\tup{t,k,l}_0$ in $\gf$. In particular, the last two items in Lemma~\ref{lem:S4t-sigma-location} show that the second type of triangles in Lemma~\ref{lem:formula-tau} (cf. Figure~\ref{fig:formula-tau}) will not appear in our simulation of $\M$.

\begin{definition}
    For each state $t$ of $\M$ and formulas $\pi,\pi',\pi'',\kappa,\kappa',\kappa'' \in \FormT$, we define:
\begin{align*}
    \sigma(t, \pi, \pi', \pi'', \kappa, \kappa', \kappa'') &\coloneq \phi_{R0} \wedge \bd\phi_{c_{t+1}} \wedge \bb\neg\phi_{c_{t}} \\
    &\qquad \wedge \bd(\phi_{R1} \wedge \bd(\D\pi \wedge \D\pi'' \wedge \B\neg\pi') \wedge \bb\neg\pi' \wedge \bb\neg\pi'') \\
    &\qquad \wedge \bd(\phi_{R2} \wedge \bd\kappa' \wedge \bb\neg\kappa \wedge \bb\neg\kappa'').
\end{align*}
    We abbreviate $\sigma(t, \pi, \pi', \pi'', \kappa, \kappa', \kappa'')$ as $\sigma(t, \overline{\pi}, \overline{\kappa})$.
\end{definition}

\begin{lemma} \label{lem:S4t-sigma-location}
    Let $V$ be a good valuation on $\gf$. Then, the following hold:
    \begin{enumerate}
    \item Suppose
    \begin{align*}
        \tup{V(\pi),V(\pi'),V(\pi'')} &= \tup{\set{b_k},\set{b_{k+1}},\set{a_k}}, \text{ and } \\
        \tup{V(\kappa),V(\kappa'),V(\kappa'')} &= \tup{\set{b_l},\set{b_{l+1}},\set{a_l}},
    \end{align*}
    for some $k, l \in \omega$. Then,
    \begin{enumerate}
    \item[(i)] $\tup{t,k,l} \in \Reach$ implies $V(\sigma(t, \overline{\pi}, \overline{\kappa})) =\set{\tup{t,k,l}_0}$, and 
    \item[(ii)] $\tup{t,k,l} \not\in \Reach$ implies $V(\sigma(t, \overline{\pi}, \overline{\kappa})) = \ve$.
    \end{enumerate}
    \item Suppose $V(\pi) = \{a_{i}\}$, $V(\pi') = \{a_{i+1}\}$ and $V(\pi'') = \{b_{i+1}\}$ for some $i \in \omega$. Then $V(\sigma(t, \overline{\pi}, \overline{\kappa})) = \ve$.
    \item Suppose $V(\kappa) = \{a_{i}\}$, $V(\kappa') = \{a_{i+1}\}$ and $V(\kappa'') = \{b_{i+1}\}$ for some $i \in \omega$. Then $V(\sigma(t, \overline{\pi}, \overline{\kappa})) = \ve$.
    \end{enumerate}
\end{lemma}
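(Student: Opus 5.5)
The plan is a direct semantic computation on $\gf$. By Lemma~\ref{lem:S4t-formula-location-2}, for a good valuation $V$ we have $V(\phi_{c_t})=\set{c_t}$, $V(\phi_{c_{t+1}})=\set{c_{t+1}}$ and $V(\phi_{Ri})=\set{z_i: z\in\Reach}$. So $V(\sigma(t,\overline{\pi},\overline{\kappa}))$ is a subset of $\set{z_0 : z\in\Reach}$, and it suffices to decide, for each $z=\tup{t',k',l'}\in\Reach$, whether $z_0$ satisfies the remaining conjuncts.

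First I would list the predecessor sets $\inver{R}[\cdot]$ in $\gf$ (i.e.\ the points seeing the given point) that the conjuncts refer to, by unfolding the generating relation. For $z_0$, the $c$-predecessors are exactly the $c_j$ with $j\ge t'+1$, together with the $c'_j$ for suitable $j$. The remaining predecessors of $z_0$ come through $z_1$ and $z_2$; we also have $e$, and some $c'_j$ and $AB$-points via $z_1, z_2$. The predecessors of $z_1$ inside $AB$ are $a_j$ for $j\ge k'+1$ and $b_j$ for $j\ge k'+2$. The predecessors of $z_2$ inside $AB$ are $b_j$ for $j\ge l'+1$ and $a_j$ for $j\ge l'+2$. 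Since distinct configurations give disjoint gadgets, the only point of $V(\phi_{R1})$ (resp.\ $V(\phi_{R2})$) seeing $z_0$ is $z_1$ (resp.\ $z_2$). I also record the upsets of the relevant ladder points. In particular, $a_{k+1}$ sees $a_k,b_k$ but not $b_{k+1}$. Likewise, $b_{i+2}$ sees $b_{i+1}$ and $a_i$ but not $a_{i+1}$. Every $c'_j$ and $e$ sees a whole tail of the ladder.

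For (1), the conjunct $\bd\phi_{c_{t+1}}\wedge\bb\neg\phi_{c_t}$ at $z_0$ forces $t+1\ge t'+1$ and $t<t'+1$, so $t=t'$. The $\phi_{R1}$-conjunct is then evaluated at $z_1$. From $\bb\neg\pi'\wedge\bb\neg\pi''$, neither $b_{k+1}$ nor $a_k$ is a predecessor of $z_1$, which gives $k\le k'$. The subformula $\bd(\D\pi\wedge\D\pi''\wedge\B\neg\pi')$ requires a predecessor $y$ of $z_1$ seeing $a_k,b_k$ but not $b_{k+1}$. Checking the candidates, such a $y$ must lie among points like $a_{k+1}$ (and $a_k$, which does not see $b_k$). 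Non-ladder predecessors are excluded: $e$ and the relevant $c'_j$ see $b_{k+1}$. Being a predecessor of $z_1$ then forces $k+1\ge k'+1$, hence $k=k'$; conversely, for $k=k'$ the witness $y=a_{k+1}$ works. The $\phi_{R2}$-conjunct, read at $z_2$, similarly gives: $b_{l+1}$ is a predecessor, so $l\ge l'$; $b_l$ and $a_l$ are not, so $l\le l'$. Hence $l=l'$. Altogether $z_0\in V(\sigma)$ iff $z=\tup{t,k,l}$, which yields (i) and (ii).

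For (2), I would argue by contradiction. The conjuncts $\bb\neg\pi'\wedge\bb\neg\pi''$ at $z_1$ say that $a_{i+1}$ is not a predecessor, so $i<k'$. The $\bd$-conjunct needs a predecessor $y$ of $z_1$ seeing $a_i$ and $b_{i+1}$ but not $a_{i+1}$; within $\gf$ such $y$ must be essentially $b_{i+1}$ or $b_{i+2}$. Being a predecessor of $z_1$ then requires $i+2\ge k'+2$, i.e.\ $i\ge k'$, a contradiction. For (3), at $z_2$ we need $a_{i+1}$ to be a predecessor, so $i+1\ge l'+2$, while $b_{i+1}$ must not be, so $i+1<l'+1$; these are incompatible. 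The main obstacle is the exhaustive case check of which points (including $e$ and the $c'_j$) have the required upsets. I would handle it by first tabulating $R[w]\cap AB$ for every $w$ as a finite or cofinite ladder interval, and then reading off each condition.
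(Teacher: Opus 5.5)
Your proposal is correct and takes essentially the same route as the paper. You use Lemma~\ref{lem:S4t-formula-location-2} to confine $V(\sigma(t,\overline{\pi},\overline{\kappa}))$ to points $z_0$, read off $t'=t$, $k'=k$, $l'=l$ from the $c$-, $\phi_{R1}$- and $\phi_{R2}$-conjuncts, and for (2) and (3) show that the predecessor conditions at $z_1$ resp.\ $z_2$ are incompatible.

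There are two harmless slips. First, the ladder predecessors of $z_1=\tup{t',k',l'}_1$ are the $b_j$ with $j\ge k'+3$, not $j\ge k'+2$, since $b_j$ sees $a_{j-2}$ but not $a_{j-1}$. Second, in (1) the point $c'_k$ (for $k\le t_0$) does satisfy $\D\pi\wedge\D\pi''\wedge\B\neg\pi'$. However, it sees $z_1$ exactly when $a_{k+1}$ does, so your conclusion $k\ge k'$ is unaffected.
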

\begin{proof}
    For (1), let $V$ be a good valuation on $\gf$ satisfying the assumptions. It suffices to show that $w \in V(\sigma(t, \overline{\pi}, \overline{\kappa}))$ iff $w = \tup{t,k,l}_0$. Take any $w \in V(\sigma(t, \overline{\pi}, \overline{\kappa}))$. Since $\gf,V,w \md \phi_{R0}$, by Lemma~\ref{lem:S4t-formula-location-2}(3), $w$ is of the form $\tup{t',k',l'}_0$. Since $\gf,V,w \md \bd\phi_{c_{t+1}} \wedge \bb\neg\phi_{c_{t}}$, by Lemma~\ref{lem:S4t-formula-location-2}(2), we see that $c_{t+1} R w$ and $w \not\in R[c_{t}]$, which entails $t' = t$. It follows from the assumption that $V(\D\pi \wedge \D\pi'' \wedge \B\neg\pi') = \set{a_{k+1}}$. Since $\gf,V,w \md \bd(\phi_{R1} \wedge \bd(\D\pi \wedge \D\pi'' \wedge \B\neg\pi')\wedge \bb\neg\pi' \wedge \bb\neg\pi'')$, we see that $\gf,V,\tup{t',k',l'}_1 \md \bd(\D\pi \wedge \D\pi'' \wedge \B\neg\pi') \wedge \bb\neg\pi''$, which entails that $\tup{t',k',l'}_1 \in R[a_{k+1}] \setminus R[a_{k}]$, and so $k' = k$. By $\gf,V,w \md \bd(\phi_{R2} \wedge \bd\kappa' \wedge \bb\neg\kappa \wedge \bb\neg\kappa'')$, we see that $\gf,V,\tup{t',k',l'}_2 \md \bd\kappa' \wedge \bb\neg\kappa$, which entails that $\tup{t',k',l'}_2 \in R[b_{l+1}] \setminus R[b_{l}]$, and so $l' = l$. Thus, $w = \tup{t,k,l}_0$. 
    
    It remains to check that $\gf,V,\tup{t,k,l}_0 \md \sigma(t, \overline{\pi}, \overline{\kappa})$, assuming $\tup{t,k,l} \in \Reach$. It is standard to verify the following satisfactions: 
    \begin{align*}
        \gf,V,\tup{t,k,l}_0 &\md \phi_{R0} \wedge \bd\phi_{c_{t+1}} \wedge \bb\neg\phi_{c_{t}} \\
        \gf,V,\tup{t,k,l}_1 &\md \phi_{R1} \wedge \bd(\D\pi \wedge \D\pi'' \wedge \B\neg\pi') \wedge \bb\neg\pi' \wedge \bb\neg\pi'', \\
        \gf,V,\tup{t,k,l}_2 &\md \phi_{R2} \wedge \bd\kappa' \wedge \bb\neg\kappa \wedge \bb\neg\kappa''.
    \end{align*}
    Thus, $\gf, V, \tup{t,k,l}_0 \md \sigma(t, \overline{\pi}, \overline{\kappa})$. Hence, (1) holds.
    
    For (2), let $V$ be a good valuation such that $V(\pi) = \{a_{i}\}$, $V(\pi') = \{a_{i+1}\}$, and $V(\pi'') = \{b_{i+1}\}$ for some $i \in \omega$. Then we see that $V(\D\pi \wedge \D\pi'' \wedge \B\neg\pi') = \set{b_{i+2}}$. By the construction of $\gf$, for any $z \in \Reach$ such that $z_1 \in R[b_{i+2}]$, we have $z_1 \in R[a_i] \sub R[a_{i+1}]$. By Lemma~\ref{lem:S4t-formula-location-2}, $\gf,V \md \phi_{R1} \wedge \bd(\D\pi \wedge \D\pi'' \wedge \B\neg\pi') \to \bd\pi'$, which entails that $V(\sigma(t, \overline{\pi}, \overline{\kappa})) = \ve$.

    For (3), let $V$ be a good valuation such that $V(\kappa) = \{a_{i}\}$, $V(\kappa') = \{a_{i+1}\}$, and $V(\kappa'') = \{b_{i+1}\}$ for some $i \in \omega$. Then, it is clear that for all $z \in \Reach$, if $z_2 \in R[a_{i+1}]$, then $z_2 \in R[b_{i+1}]$. By Lemma~\ref{lem:S4t-formula-location-2}, $\gf,V \md \phi_{R2} \wedge \bd\kappa' \to \bd\kappa''$, and so $V(\sigma(t, \overline{\pi}, \overline{\kappa})) = \ve$. 
\end{proof}

In Lemma~\ref{lem:S4t-sigma-location}, one may have noticed that $\sigma(t, \overline{\pi}, \overline{\kappa})$ has the intended meaning only if the substituted formulas satisfy a very specific condition, namely, the assumption in (1). Thus, before introducing the axioms that simulate the instructions of $\M$, we need the following formulas to characterize the valuations that meet this condition. Intuitively, the formula $\psi_\tri(\overline{q})$ indicates a triangle and the formulas $\psi(\overline{q})$, $\psi'(\overline{q})$, and $\psi''(\overline{q})$ point to the three vertices of the triangle. The $(-)^+$ operation moves the triangle one step below.

\begin{definition}
    Let $\overline{q} = (q,q',q'')$ be a sequence of propositional variables. We define the following formulas: 
    \begin{align*}
    \psi_\tri(\overline{q}) \coloneq & \A(q \vee q' \vee q'' \to \phi_{AB})\\
    & \wedge \E(\phi_{AB} \wedge q \wedge \B\neg q' \wedge \bd q' \wedge \B\neg q'' \wedge \bb\neg q'') \\
    &\wedge \E(\phi_{AB} \wedge q' \wedge \D q \wedge \bb\neg q \wedge \B\neg q'' \wedge \bb\neg q'')\\
    & \wedge \E(\phi_{AB} \wedge q'' \wedge \B\neg q \wedge \bb\neg q \wedge \B\neg q' \wedge \bb\neg q'), \\
    \psi(\overline{q}) \coloneq & \psi_\tri(\overline{q}) \land q,\\ 
    \psi'(\overline{q}) \coloneq & \psi_\tri(\overline{q}) \land q',\\ 
    \psi''(\overline{q}) \coloneq & \psi_\tri(\overline{q}) \land q''.
    \end{align*} 
    Moreover, for each formula $\phi(\overline{q})$, let 
    \[\phi^+(\overline{q}) \coloneq \phi(\tau(\overline{q}), \tau'(\overline{q}), \tau''(\overline{q})).\]
\end{definition}

Note that $\phi_{AB}$ has no occurrence of $q,q',q''$, so it remains the same under $(-)^+$.

\begin{lemma}\label{lem:S4t-psi-location}
    Let $V$ be a good valuation on $\gf$. Then, the following are equivalent:
    \begin{enumerate}
        \item $V(\psi_\tri(\overline{q})) \neq \ve$;
        \item $V(\psi_\tri(\overline{q})) = W$;
        \item there exists $i \in \omega$ such that one of the following holds:
    \begin{enumerate}
        \item[(a)] $V(q) = \set{b_i}$, $V(q') = \set{b_{i+1}}$ and $V(q'') = \set{a_{i}}$, or
        \item[(b)] $V(q) = \set{a_i}$, $V(q') = \set{a_{i+1}}$ and $V(q'') = \set{b_{i+1}}$.
    \end{enumerate}
    \end{enumerate}
\end{lemma}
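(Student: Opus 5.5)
The proof will go around the cycle (1)$\Rightarrow$(3)$\Rightarrow$(2)$\Rightarrow$(1). The last implication is trivial because $W \neq \ve$. The implication (3)$\Rightarrow$(2) is a direct check.

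\emph{(3)$\Rightarrow$(2).} Every conjunct of $\psi_\tri(\overline{q})$ starts with $\A$ or $\E$. Since $\gf \md \axiom{br}_7$ and $\gf$ is rooted (every two points are joined by an $\R$-path of length $\leq 7$), the truth of such a formula does not depend on the point of evaluation. So it suffices to check each conjunct once, using $V(\phi_{AB}) = AB$ from Lemma~\ref{lem:S4t-formula-location-1}.
\begin{itemize}
\item In case (a), the first $\E$-witness is $b_i$: we have $\B\neg q'$ since $b_{i+1}\notin R[b_i]$, $\bd q'$ since $b_{i+1} R b_i$, and $a_i$ is neither above nor below $b_i$.
\item The second witness is $b_{i+1}$: it sees $b_i$, is not below $b_i$, and is incomparable with $a_i$ (note $b_{i+2}Ra_i$ but $b_{i+1}$ does not reach $a_i$).
\item The third witness is $a_i$, which is incomparable with $b_i$ and $b_{i+1}$.
\end{itemize}
Case (b) is symmetric: use $a_i$, $a_{i+1}$, $b_{i+1}$, checking that $a_i$ and $b_{i+1}$ are incomparable and that $b_{i+1}$ is incomparable with $a_{i+1}$. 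These are read off from the transitive closure of the ladder edges.

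\emph{(1)$\Rightarrow$(3).} This is the main work. Nonemptiness, together with the universality of $\A$ and $\E$, gives the following.
\begin{itemize}
\item $V(q)\cup V(q')\cup V(q'') \subseteq AB$.
\item There are witnesses $u,u',u''\in AB$ with $u\in V(q)$, $u'\in V(q')$, $u''\in V(q'')$ satisfying the displayed conditions.
\end{itemize}

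First I pin down $u$ and $u'$. The first conjunct says $u$ has a proper predecessor in $V(q')$ and no successor in $V(q')$. The second says $u'\in V(q')$ sees some point of $V(q)$ and has no predecessor in $V(q)$. Hence $u' \neq u$. Moreover, $u'$ is not above $u$ in the sense $uRu'$, by the first conjunct applied to the point $u'$ itself.

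Then I use the third conjunct: $u''$ is $R$-incomparable with every point of $V(q)\cup V(q')$. The first and second conjuncts force $u$ and $u'$ to be incomparable with all of $V(q'')$, in particular with $u''$. In the ladder $AB$, the only points incomparable with $b_j$ are $a_j$ and $a_{j+1}$, and the only points incomparable with $a_j$ are $b_{j-1}$ and $b_j$ (where they exist). So an incomparable triple is very constrained.

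The main obstacle is to show that $V(q)$, $V(q')$ and $V(q'')$ are \emph{singletons} and form exactly one of the two triangle shapes. A priori they may contain extra points: only membership in $AB$ and the local conditions at the witnesses are enforced. My plan is as follows.
\begin{enumerate}
\item Use that every point of $V(q'')$ must be incomparable with \emph{all} of $V(q)\cup V(q')$. The pair $\set{u,u'}$ lies on a chain ($u'Ru$ up to a predecessor step in $V(q')$), so its common incomparables form at most one point. This forces $V(q'')=\set{u''}$.
\item Symmetrically, every point of $V(q)\cup V(q')$ is incomparable with $u''$, so $V(q)\cup V(q')$ lies within the two incomparables of $u''$. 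These are $\set{b_{j-1},b_j}$ if $u''=a_j$, or $\set{a_j,a_{j+1}}$ if $u''=b_j$.
\item Inside this two-element chain, the witness conditions decide membership. The lower point (the one seen by the other) lies in $V(q)$ but not in $V(q')$, because of $\bb\neg q$ at $u'$ and $\B\neg q'$ at $u$. The upper point lies in $V(q')$ but not in $V(q)$.
\end{enumerate}
This yields case (a) with $u''=a_i$ and case (b) with $u''=b_{i+1}$. I expect a short case analysis on the ladder relations, such as $a_{j+1}Rb_j$ and $b_{j+2}Ra_j$, to be the tedious step. Edge cases near $a_0$ and $b_0$ need a separate check.
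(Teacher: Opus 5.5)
Your proposal is correct and follows essentially the paper's argument: confine $V(q),V(q'),V(q'')$ to $AB$ via the $\A$-conjunct and $V(\phi_{AB})=AB$, then combine the $\B\neg$/$\bb\neg$ conditions at the three $\E$-witnesses with reflexivity and the incomparability pattern of the ladder to force the two triangle shapes (the paper locates $u,u',u''$ first and then reads off the valuations, whereas you first show $V(q'')=\set{u''}$, which is only a reordering). Fix one slip: $a_j$ is incomparable exactly with $b_j,b_{j+1}$, and $b_j$ exactly with $a_{j-1},a_j$; your general lists are shifted by one, although your check of (3)$\Rightarrow$(2) and your final identification of (a) with $u''=a_i$ and (b) with $u''=b_{i+1}$ use the correct ones. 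Also, Step 1 needs no chain claim, since any two distinct points of $AB$ have at most one common incomparable point.
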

\begin{proof}
    The equivalence between (1) and (2) follows from the fact that $\gf \md \axiom{br}_{7}$. The implication from (3) to (1) is straightforward. We now prove that (1) implies (3). Suppose $V(\psi_\tri(\overline{q})) \neq \ve$. Then $V(q \vee q' \vee q'') \sub \AB$. By Lemma~\ref{lem:S4t-formula-location-1}, there exist $u,u',u'' \in \AB$ such that 
    \begin{align*}
        \gf,V,u\:\: &\md q \wedge \B\neg q'' \wedge \bb\neg q'' \wedge \B\neg q', \\
        \gf,V,u'\: &\md q' \wedge \B\neg q'' \wedge \bb\neg q'', \text{ and } \\
        \gf,V,u'' &\md q''.
    \end{align*}
    Then, we see that $\set{u,u'} \cap (R[u''] \cup \inver{R}[u'']) = \ve$ and $u' \not\in R[u]$. By the construction of $\gf$, either $u'' = a_i$ or $u'' = b_{i+1}$ for some $i \in \omega$. Suppose $u'' = a_i$. Then we see that $u = b_i$ and $u' = b_{i+1}$. Since $\gf,V,a_i \md \B\neg q \wedge \bb\neg q$ and $\gf,V,b_{i+1} \md \bb\neg q$, we have $V(q) = \set{b_i}$. It is also straightforward to check that $V(q') = \set{b_{i+1}}$ and $V(q'') = \set{a_{i}}$. Thus, (a) holds. Similarly, if $u'' = b_{i+1}$, then (b) holds. Hence, (1) implies (3).
\end{proof}

In other words, Lemma~\ref{lem:S4t-psi-location} says that $\psi_\tri(\overline{q})$ is satisfied in $(\gf,V)$ if and only if $V(\overline{q})$ forms a triangle depicted in Figure~\ref{fig:psiS}.

\begin{figure}[ht]
\[
    \begin{tikzpicture}[scale=1.2]

        \begin{scope}[shift={(0,0)}]
            \node (vd1) at (.75,3.4){$\vdots$};
            \node (vd2) at (0,3.4){$\vdots$};
            \node (vd3) at (1.5,3.4){$\vdots$};

            \node (al0) at (0,2.5)[label=left:$a_{i}$]{$\circ$};
            \node (al1) at (0,1.5)[label=left:$a_{i+1}$]{$\circ$};
            \node (al2) at (0,0.5)[label=left:$a_{i+2}$]{$\circ$};
            \node (bl0) at (1.5,2.5)[label=right:$b_{i}$]{$\circ$};
            \node (bl1) at (1.5,1.5)[label=right:$b_{i+1}$]{$\circ$};
            \node (bl2) at (1.50,0.5)[label=right:$b_{i+2}$]{$\circ$};

            \node at (-1.5,2.5) {$q''$};
            \node at (3,2.5) {$q$};
            \node at (3,1.5) {$q'$};

            \draw [->] (al1) -- (al0);
            \draw [->] (al2) -- (al1);
            \draw [->] (bl1) -- (bl0);
            \draw [->] (bl2) -- (bl1);
            \draw [->] (al1) -- (bl0);
            \draw [->] (al2) -- (bl1);
            \draw [->] (bl2) -- (al0);
            \node (vd4) at (0,0){$\vdots$};
            \node (vd5) at (.75,0){$\vdots$};
            \node (vd6) at (1.5,0){$\vdots$};
        \end{scope}

        \begin{scope}[shift={(7,0)}]
            \node (vd1) at (.75,3.4){$\vdots$};
            \node (vd2) at (0,3.4){$\vdots$};
            \node (vd3) at (1.5,3.4){$\vdots$};

            \node (al0) at (0,2.5)[label=left:$a_{i}$]{$\circ$};
            \node (al1) at (0,1.5)[label=left:$a_{i+1}$]{$\circ$};
            \node (al2) at (0,0.5)[label=left:$a_{i+2}$]{$\circ$};
            \node (bl0) at (1.5,2.5)[label=right:$b_{i}$]{$\circ$};
            \node (bl1) at (1.5,1.5)[label=right:$b_{i+1}$]{$\circ$};
            \node (bl2) at (1.50,0.5)[label=right:$b_{i+2}$]{$\circ$};

            \node at (-1.5,2.5) {$q$};
            \node at (-1.5,1.5) {$q'$};
            \node at (3,1.5) {$q''$};

            \draw [->] (al1) -- (al0);
            \draw [->] (al2) -- (al1);
            \draw [->] (bl1) -- (bl0);
            \draw [->] (bl2) -- (bl1);
            \draw [->] (al1) -- (bl0);
            \draw [->] (al2) -- (bl1);
            \draw [->] (bl2) -- (al0);
            \node (vd4) at (0,0){$\vdots$};
            \node (vd5) at (.75,0){$\vdots$};
            \node (vd6) at (1.5,0){$\vdots$};
        \end{scope}
    \end{tikzpicture}
\]
\caption{Possible cases where $\psi_\tri(\overline{q})$ is satisfied}
\label{fig:psiS}
\end{figure}

\begin{lemma} \label{lem:S4t-psi+-location}
    Let $V$ be a good valuation. 
    Suppose that at least one of $V(\psi_\tri(\overline{q}))$ and $V(\psi_\tri^+(\overline{q}))$ is non-empty. Then, there exists some $i \in \omega$ such that one of the following holds:
    \begin{enumerate}
        \item $V(\psi) = \{b_i\}$, $V(\psi') = V(\psi^+) = \{b_{i+1}\}$, $V(\psi'') = \{a_i\}$, $V(\psi'^+) = \{b_{i+2}\}$, and $V(\psi''^+) = \{a_{i+1}\}$;
        \item $V(\psi) = \{a_i\}$, $V(\psi') = V(\psi^+) = \{a_{i+1}\}$, $V(\psi'') = \{b_{i+1}\}$, $V(\psi'^+) = \{a_{i+2}\}$, and $V(\psi''^+) = \{b_{i+2}\}$.
    \end{enumerate}
\end{lemma}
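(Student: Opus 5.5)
The plan is to reduce everything to Lemmas~\ref{lem:S4t-psi-location} and \ref{lem:formula-tau}, splitting according to which of the two triangle formulas is satisfied.

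\emph{Case A: $V(\psi_\tri(\overline{q})) \neq \ve$.} By Lemma~\ref{lem:S4t-psi-location}, $V(\psi_\tri(\overline{q})) = W$, and for some $i \in \omega$ either (a) $\tup{V(q),V(q'),V(q'')} = \tup{\{b_i\},\{b_{i+1}\},\{a_i\}}$ or (b) $\tup{V(q),V(q'),V(q'')} = \tup{\{a_i\},\{a_{i+1}\},\{b_{i+1}\}}$. Since $\psi_\tri$ is true everywhere, we get $V(\psi)=V(q)$, $V(\psi')=V(q')$ and $V(\psi'')=V(q'')$.

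Next I apply Lemma~\ref{lem:formula-tau}, direction (i)$\Rightarrow$(ii). In subcase (a) it gives $\tup{V(\tau),V(\tau'),V(\tau'')} = \tup{\{b_{i+1}\},\{b_{i+2}\},\{a_{i+1}\}}$, and in subcase (b) it gives the corresponding $a$-triangle. The formula $\psi_\tri^+(\overline{q})$ is exactly $\psi_\tri$ evaluated at $(\tau,\tau',\tau'')$, since $\phi_{AB}$ contains no $q$'s. The new triple is again of the form in Lemma~\ref{lem:S4t-psi-location}(3), now with index $i+1$, so that lemma yields $V(\psi_\tri^+) = W$. Hence $V(\psi^+)=V(\tau)$, $V(\psi'^+)=V(\tau')$ and $V(\psi''^+)=V(\tau'')$, which gives item (1) or (2) respectively.

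\emph{Case B: $V(\psi_\tri^+(\overline{q})) \neq \ve$.} Apply Lemma~\ref{lem:S4t-psi-location} to the valuation $V'$ that agrees with $V$ except $V'(q),V'(q'),V'(q'')$ are $V(\tau),V(\tau'),V(\tau'')$. Then $V'(\phi^*)=V(\phi^*)$, since $\phi^*$ uses only $p_i$ and $q_0,q_1$. I must make sure the triangle variables are chosen distinct from these; if not, I would rename them, since the lemma is stated schematically. So $V'$ is still good.

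Lemma~\ref{lem:S4t-psi-location} then gives $j$ such that $\tup{V(\tau),V(\tau'),V(\tau'')}$ is either $\tup{\{b_j\},\{b_{j+1}\},\{a_j\}}$ or $\tup{\{a_j\},\{a_{j+1}\},\{b_{j+1}\}}$.

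The main obstacle is recovering the original triangle, which needs $j \geq 1$ so that (ii)$\Rightarrow$(i) of Lemma~\ref{lem:formula-tau} applies with $i=j-1$. I would rule out $j=0$ directly from the definitions.

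In the $b$-case with $j=0$, $\tau''$ would hold at $a_0$. But $\tau''$ requires $\D(q''\wedge \bb\neg q'\wedge\ldots)$ at a point $v\neq a_0$ with $a_0Rv$, and $R[a_0]=\{a_0\}$ inside $AB$, which is impossible.

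The $a$-case with $j=0$ is handled the same way. There $V(\tau)=V(q')=\{a_0\}$ and $\tau'$ holds at $a_1$. The conjunct $\D q''$ together with $\A(\ldots\to\phi_{AB}[q'/\phi^*])$ forces $q''\subseteq\{a_0,a_1,b_0\}$. The conjunct $\B\neg(\D q\wedge\D q''\wedge\B\neg q')$ must then fail somewhere, or else $a_1$ is forced to be excluded, and I would chase the definitions until one of these yields a contradiction.

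Once $j \geq 1$ is secured, Lemma~\ref{lem:formula-tau} gives the triangle for $\overline{q}$ with $i=j-1$. Then Lemma~\ref{lem:S4t-psi-location} gives $V(\psi_\tri)\neq\ve$, and we are back in Case A.
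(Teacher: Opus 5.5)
Your structure matches the paper's. Case A uses Lemma~\ref{lem:S4t-psi-location} and the forward direction of Lemma~\ref{lem:formula-tau}. Case B passes to the substituted triple, rules out $j=0$, and then uses the backward direction. Your exclusion of $j=0$ in the $b$-case is the paper's argument.

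The gap is the $a$-case with $j=0$, that is, $V(\tau)=V(q')=\{a_0\}$, $V(\tau')=\{a_1\}$, $V(\tau'')=\{b_1\}$. You never derive a contradiction there, and the route you sketch would not produce one.

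\begin{itemize}
\item Your step confining $q''$ to $\{a_0,a_1,b_0\}$ presupposes that $\phi_{AB}[q'/\phi^*]$ defines $AB$. Lemma~\ref{lem:S4t-formula-location-phiABq'} requires $V(q')\cap\{a_0,b_0\}=\ve$, which fails here.
\item Even granting that premise, the conjuncts $\D q''$, $\A(\cdots)$ and $\B\neg(\D q\wedge\D q''\wedge\B\neg q')$ of $\tau'$ are jointly satisfiable. With $V(q)=\{b_1\}$ and $V(q'')=\{b_0\}$ they would make $\tau'$ true exactly at $a_1$. So chasing $\tau'$ at $a_1$ through those conjuncts cannot close the case.
\end{itemize}

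The missing step is a one-line frame observation, and either of the following fixes it:
\begin{itemize}
\item The paper's fix uses $\tau''$ at $b_1$ instead. Its conjunct $\D(q\wedge\bd q')$ needs a successor $u$ of $b_1$ with a predecessor in $V(q')=\{a_0\}$, i.e.\ $u\in R[a_0]\cap R[b_1]$. Since $R[a_0]=\{a_0\}$ and $a_0\notin R[b_1]$ (so $a_0\notin\inver{R}[R[b_1]]$), no such $u$ exists.
\item You can also finish your own $\tau'$-route by computing $\phi_{AB}[q'/\phi^*]$ correctly. Because $e,x_4\in\inver{R}[a_0]$, the set $V(\bd\D q')\supseteq R[e]\cup R[x_4]$ now contains $x_3$. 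It omits only $x_0,x_2,x_5$, each of which sees $x_1$ or $b_0$. Hence $V(\D\bd\D q')=W$ and $V(\phi_{x_3}[q'/\phi^*])=\ve$. Then $\phi_{x_5}[q'/\phi^*]$ holds everywhere, so $\phi_{AB}[q'/\phi^*]$, which has the conjunct $\B\neg\phi_{x_5}$, is empty. This gives $V(\tau')=\ve\neq\{a_1\}$.
\end{itemize}
With either fix, the rest of your argument, including the reduction of Case B back to Case A, goes through.
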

\begin{proof}
    First, suppose $V(\psi_\tri(\overline{q})) \neq \ve$. 
    By Lemma~\ref{lem:S4t-psi-location}, there are two cases to consider. The first case is that $V(q) = \set{b_i}$, $V(q') = \set{b_{i+1}}$, and $V(q'') = \set{a_{i}}$ for some $i \in \omega$. By Lemma~\ref{lem:formula-tau}, we have $V(\tau) = \set{b_{i+1}}$, $V(\tau') = \set{b_{i+2}}$, and $V(\tau'') = \set{a_{i+1}}$. By Lemma~\ref{lem:S4t-psi-location}, we have $V(\psi_\tri^+(\overline{q})) = V(\psi_\tri(\overline{q})) = W$, which implies (1). The second case is that $V(q) = \set{a_i}$, $V(q') = \set{a_{i+1}}$, and $V(q'') = \set{b_{i+1}}$ for some $i \in \omega$. A similar argument with Lemmas~\ref{lem:formula-tau} and \ref{lem:S4t-psi-location} shows that (2) holds.

    Next, suppose $V(\psi_\tri^+(\overline{q})) = V(\psi_\tri(\tau,\tau',\tau'')) \neq \ve$. 
    By the proof of Lemma~\ref{lem:S4t-psi-location}, we have similar case distinctions as in the previous paragraph. The first case is that $V(\tau) = \set{b_j}$, $V(\tau') = \set{b_{j+1}}$, and $V(\tau'') = \set{a_{j}}$ for some $j \in \omega$. By $\gf,V,a_{j} \md \tau''$, we have $\gf,V,a_{j} \md \D q'' \wedge \neg q''$, which entails $j \neq 0$. Let $i = j - 1$. Then $V(\tau) = \set{b_{i+1}}$, $V(\tau') = \set{b_{i+2}}$, and $V(\tau'') = \set{a_{i+1}}$. By Lemma~\ref{lem:formula-tau}, we have $V(q) = \set{b_{i}}$, $V(q') = \set{b_{i+1}}$ and $V(q'') = \set{a_{i}}$. Thus, (1) holds.
    
    The second case is that $V(\tau) = \set{a_j}$, $V(\tau') = \set{a_{j+1}}$, and $V(\tau'') = \set{b_{j+1}}$ for some $j \in \omega$. By $\gf,V,b_{j+1} \md \tau''$, we have $\gf,V,b_{j+1} \md \D(q \wedge \bd q')$. Since $V(q') = V(\tau) = \set{a_j}$ and $a_0 \not\in \inver{R}[R[b_1]]$, we see that $j \neq 0$. Let $i = j-1$. Then, $V(\tau) = \set{a_{i+1}}$, $V(\tau') = \set{a_{i+2}}$, and $V(\tau'') = \set{b_{i+2}}$. By Lemma~\ref{lem:formula-tau}, we have $V(q) = \set{a_{i}}$, $V(q') = \set{a_{i+1}}$, and $V(q'') = \set{b_{i+1}}$. By Lemma~\ref{lem:S4t-psi-location}, we have $V(\psi_\tri^+(\overline{q})) = V(\psi_\tri(\overline{q})) = W$, which implies (2).
\end{proof}

For $i \in \omega$, let $[i/\overline{q}]$ be the substitution $[\phi_{b_i}/q, \phi_{b_{i+1}}/q', \phi_{a_i}/q'']$. We also write $\psi(i)$ for $\psi[i/\overline{q}]$. Moreover, as usual, we abbreviate $(\psi(\overline{q}), \psi'(\overline{q}), \psi''(\overline{q}))$ as $\overline{\psi}(\overline{q})$. These notations apply to other formulas as well. For example, under our notation, $\sigma(t, \overline{\psi}(\overline{q}), \overline{\psi}(0))$ is the substitution result of
\[\sigma(t, \psi(\overline{q}), \psi'(\overline{q}), \psi''(\overline{q}), \psi(\overline{r}), \psi'(\overline{r}), \psi''(\overline{r})) \; [\phi_{b_0}/r, \phi_{b_1}/r', \phi_{a_0}/r''].\]

Now with all these preparations, we are ready to introduce the axioms simulating the instructions in $\M$. Intuitively, each $AxI$ says that if the current configuration of $\M$ is represented by the antecedent, then the consequent represents the next configuration of $\M$ after executing~$I$.

\begin{definition}
With each instruction $I$ in $\M$ we associate a formula $AxI$ by taking:
\begin{itemize}
    \item If $I=t\to\tup{t',1,0}$, then
    \[
    AxI \coloneq \phi^* \wedge \E\sigma(t, \overline{\psi}(\overline{q}), \overline{\psi}(\overline{r})) \to \phi^* \wedge \E\sigma(t', \overline{\psi^+}(\overline{q}), \overline{\psi}(\overline{r})).
    \]
    \item If $I=t\to\tup{t',0,1}$, then
    \[
    AxI \coloneq \phi^* \wedge \E\sigma(t, \overline{\psi}(\overline{q}), \overline{\psi}(\overline{r})) \to \phi^* \wedge \E\sigma(t', \overline{\psi}(\overline{q}), \overline{\psi^+}(\overline{r})).
    \]
    \item If $I=t\to\tup{t',-1,0} (\tup{t'',0,0})$, then
    \begin{align*}
        AxI \coloneq & (\phi^* \wedge \E\sigma(t, \overline{\psi^+}(\overline{q}), \overline{\psi}(\overline{r})) \to \phi^* \wedge \E\sigma(t', \overline{\psi}(\overline{q}), \overline{\psi}(\overline{r}))) \\
        & \wedge (\phi^* \wedge \E\sigma(t,\overline{\psi}(0), \overline{\psi}(\overline{r})) \to \phi^* \wedge \E\sigma(t'',\overline{\psi}(0), \overline{\psi}(\overline{r}))).
    \end{align*}

    \item If $I=t\to\tup{t',0,-1} (\tup{t'',0,0})$, then
    \begin{align*}
        AxI \coloneq & (\phi^* \wedge \E\sigma(t, \overline{\psi}(\overline{q}), \overline{\psi^+}(\overline{r})) \to \phi^* \wedge \E\sigma(t', \overline{\psi}(\overline{q}), \overline{\psi}(\overline{r}))) \\
        & \wedge (\phi^* \wedge \E\sigma(t,\overline{\psi}(\overline{q}),\overline{\psi}(0)) \to \phi^* \wedge \E\sigma(t'',\overline{\psi}(\overline{q}),\overline{\psi}(0))).
    \end{align*}
\end{itemize}
Finally, let $AxM\coloneq\bigwedge_{I\in \M}AxI$.
\end{definition}

Note that by definition $\M$ has only finitely many instructions, so $AxM$ is a well-defined formula. 
In what follows, we show that $AxM$ simulates $\M$ semantically, in the sense of Lemma~\ref{lem:S4t-gf-AxM}, and syntactically, in the sense of Lemma~\ref{lem:S4t-syntax-main}.

\begin{lemma} \label{lem:S4t-gf-AxM}
    $\gf \md AxM$.
\end{lemma}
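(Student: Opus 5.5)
It suffices to show $\gf \md AxI$ for every instruction $I \in \M$, since $AxM$ is the finite conjunction of these. Fix $I$, a valuation $V$ on $\gf$ and a point $w$, and assume $w$ satisfies the antecedent of (one conjunct of) $AxI$. The antecedent contains $\phi^*$, so $V(\phi^*) \neq \ve$ and $V$ is good. Hence Lemmas~\ref{lem:S4t-formula-location-1}, \ref{lem:S4t-formula-location-2}, \ref{lem:S4t-sigma-location}, \ref{lem:S4t-psi-location} and \ref{lem:S4t-psi+-location} all apply. The conjunct $\phi^*$ of the consequent holds at $w$ trivially. Because $\gf \md \axiom{br}_7$, the modality $\E$ is the global existential modality on $\gf$. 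So it remains to show that the relevant instance of $\sigma$ has nonempty extension under $V$, given that the antecedent instance does.

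Next, I extract the configuration from the antecedent. Consider first $I = t \to \tup{t',1,0}$. Nonemptiness of $V(\sigma(t, \overline{\psi}(\overline{q}), \overline{\psi}(\overline{r})))$ forces the auxiliary nonemptiness conditions inside $\sigma$ (the $\bd(\D\pi \wedge \cdots)$ and $\bd\kappa'$ conjuncts). From these I argue that $V(\psi_\tri(\overline{q}))$ and $V(\psi_\tri(\overline{r}))$ are nonempty: otherwise $\psi(\overline{q})$, $\psi'(\overline{q})$ and $\psi''(\overline{q})$ are all empty, and $\bd(\D\pi\wedge\cdots)$ fails everywhere. By Lemma~\ref{lem:S4t-psi-location}, each triple is a triangle of type (a) or (b). Lemma~\ref{lem:S4t-sigma-location}(2),(3) excludes type (b), since it would make $\sigma$ empty. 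So the triples are $(\set{b_k},\set{b_{k+1}},\set{a_k})$ and $(\set{b_l},\set{b_{l+1}},\set{a_l})$ for some $k,l$. Lemma~\ref{lem:S4t-sigma-location}(1)(ii) then gives $\tup{t,k,l} \in \Reach$.

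Now I apply the transition. Since $I$ applies, $\tup{t',k+1,l} \in \Reach$. By Lemma~\ref{lem:S4t-psi+-location}(1), $\overline{\psi^+}(\overline{q})$ evaluates to $(\set{b_{k+1}},\set{b_{k+2}},\set{a_{k+1}})$. Lemma~\ref{lem:S4t-sigma-location}(1)(i) then gives $V(\sigma(t',\overline{\psi^+}(\overline{q}),\overline{\psi}(\overline{r}))) = \set{\tup{t',k+1,l}_0} \neq \ve$, so $\E\sigma$ holds at $w$. The instruction $t\to\tup{t',0,1}$ is symmetric.

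For the decrement instructions, the first conjunct has $\overline{\psi^+}(\overline{q})$ in the antecedent. This requires $\psi_\tri^+(\overline{q})$ to be nonempty, and Lemma~\ref{lem:S4t-psi+-location} then gives a type (a) or (b) configuration. Type (b) is killed by Lemma~\ref{lem:S4t-sigma-location}(2). In case (1), the antecedent encodes $\tup{t,k+1,l}\in\Reach$ with $k+1 \geq 1$, so the decrement yields $\tup{t',k,l}\in\Reach$, and the original triangle $\overline{\psi}(\overline{q})$ realizes $k$. For the second conjunct, the substitution $[0/\overline{q}]$ gives $\overline{\psi}(0)$. By Lemma~\ref{lem:S4t-formula-location-1}, $\phi_{b_0},\phi_{b_1},\phi_{a_0}$ denote $b_0,b_1,a_0$, so $\psi_\tri(0)$ holds globally and $\overline{\psi}(0)$ denotes the zero triangle. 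The antecedent then encodes $\tup{t,0,l}\in\Reach$, hence $\tup{t'',0,l}\in\Reach$.

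The main obstacle is the justification in the first step that a nonempty $\sigma$ forces the $\psi_\tri$'s to be nonempty. When $\psi_\tri$ fails, every $\psi$-formula denotes $\ve$. I must check against the definition of $\sigma$ that $\bd(\D\pi\wedge\D\pi''\wedge\B\neg\pi')$ and $\bd\kappa'$ then fail, which they do, since $\D\ve$ and $\bd\ve$ are empty. I also need the minor check that the decrement case correctly matches $\psi^+$ with index $k+1 \geq 1$.
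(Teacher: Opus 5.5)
Your proposal is correct and follows essentially the same route as the paper's proof. You read off type (a) triangles via Lemmas~\ref{lem:S4t-psi-location}/\ref{lem:S4t-psi+-location}, exclude type (b) with Lemma~\ref{lem:S4t-sigma-location}(2),(3), obtain the configuration in $\Reach$ from Lemma~\ref{lem:S4t-sigma-location}(1), apply the instruction, and conclude with Lemma~\ref{lem:S4t-sigma-location}(1)(i). Your check that a nonempty $\sigma$ forces $V(\psi_\tri)\neq\ve$, because $\bd\D\ve$ and $\bd\ve$ are empty, just spells out what the paper abbreviates as ``by the construction of $\sigma$''.
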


\begin{proof}
    It suffices to show that $\gf \md AxI$ for each instruction $I \in \M$. We show the cases where $I=t\to\tup{t',1,0}$ and $I=t\to\tup{t',-1,0} (\tup{t'',0,0})$; the other two cases are similar.

    For the case $I=t\to\tup{t',1,0}$, let $V$ be a valuation on $\gf$ and $w \in W$. Suppose that
    \[\gf, V, w \md \phi^* \wedge \E\sigma(t, \overline{\psi}(\overline{q}), \overline{\psi}(\overline{r})).\]
    Then, $V(\phi^*) \neq \ve$ and $V(\sigma(t, \overline{\psi}(\overline{q}), \overline{\psi}(\overline{r}))) \neq \ve$. By the construction of $\sigma$, we have $V(\overline{\psi}(\overline{q})) \neq \ve$ and so $V(\psi_\tri(\overline{q})) \neq \ve$. By Lemma~\ref{lem:S4t-psi+-location}, either (i) $V(\psi'(\overline{q})) = \{b_{k+1}\}$ and $V(\psi''(\overline{q})) = \{a_k\}$ or (ii) $V(\psi'(\overline{q})) = \{a_{k+1}\}$ and $V(\psi''(\overline{q})) = \{b_{k+1}\}$ for some $k \in \omega$. It follows from Lemma~\ref{lem:S4t-sigma-location}(2) that (ii) is impossible. Thus, by Lemma~\ref{lem:S4t-psi+-location} again, we have $V(\psi(\overline{q})) = \{b_k\}$, $V(\psi'(\overline{q})) = V(\psi^+(\overline{q})) = \{b_{k+1}\}$, $V(\psi''(\overline{q})) = \{a_k\}$, $V(\psi'^+(\overline{q})) = \{b_{k+2}\}$ and $V(\psi''^+(\overline{q})) = \{a_{k+1}\}$. Similarly, by Lemmas~\ref{lem:S4t-psi+-location} and \ref{lem:S4t-sigma-location}(3), we see that $V(\psi(\overline{r})) = \{b_l\}$, $V(\psi'(\overline{r})) = V(\psi^+(\overline{r})) = \{b_{l+1}\}$, $V(\psi''(\overline{r})) = \{a_l\}$, $V(\psi'^+(\overline{r})) = \{b_{l+2}\}$ and $V(\psi''^+(\overline{r})) = \{a_{l+1}\}$ for some $l \in \omega$. 
    
    Thus, it follows from Lemma~\ref{lem:S4t-sigma-location}(1) that 
    \[V(\sigma(t, \overline{\psi}(\overline{q}), \overline{\psi}(\overline{r}))) = \set{\tup{t,k,l}_0}.\]
    This implies that $\tup{t,k,l} \in \Reach$, which yields $\tup{t',k+1,l} \in \Reach$ by applying $I=t\to\tup{t',1,0}$. Again by Lemma~\ref{lem:S4t-sigma-location}(1), we have 
    \[V(\sigma(t', \overline{\psi^+}(\overline{q}), \overline{\psi}(\overline{r}))) = \set{\tup{t',k+1,l}_0} \neq \ve.\]
    Thus, 
    \[\gf, V, w \md \phi^* \wedge \E\sigma(t', \overline{\psi^+}(\overline{q}), \overline{\psi}(\overline{r})),\]
    which entails that $\gf, V, w \md AxI$.
    
    For the case $I=t\to\tup{t',-1,0} (\tup{t'',0,0})$, let $V$ be a valuation on $\gf$ and $w \in W$. Then we have two subcases.
    
    First, suppose that
    \[\gf, V, w \md \phi^* \wedge \E\sigma(t, \overline{\psi^+}(\overline{q}), \overline{\psi}(\overline{r})).\]
    Then, $V(\phi^*) \neq \ve$ and $V(\sigma(t, \overline{\psi^+}(\overline{q}), \overline{\psi}(\overline{r}))) \neq \ve$. As in the previous case, by a similar argument using Lemmas~\ref{lem:S4t-psi+-location} and \ref{lem:S4t-sigma-location}, we have $V(\psi(\overline{q})) = \{b_k\}$, $V(\psi'(\overline{q})) = V(\psi^+(\overline{q})) = \{b_{k+1}\}$, $V(\psi''(\overline{q})) = \{a_k\}$, $V(\psi'^+(\overline{q})) = \{b_{k+2}\}$ and $V(\psi''^+(\overline{q})) = \{a_{k+1}\}$ for some $k \in \omega$, and $V(\psi(\overline{r})) = \{b_l\}$, $V(\psi'(\overline{r})) = V(\psi^+(\overline{r})) = \{b_{l+1}\}$, $V(\psi''(\overline{r})) = \{a_l\}$, $V(\psi'^+(\overline{r})) = \{b_{l+2}\}$ and $V(\psi''^+(\overline{r})) = \{a_{l+1}\}$ for some $l \in \omega$. 
    
    Thus, it follows from Lemma~\ref{lem:S4t-sigma-location}(1) that 
    \[V(\sigma(t, \overline{\psi^+}(\overline{q}), \overline{\psi}(\overline{r}))) = \set{\tup{t,k+1,l}_0}.\]
    This implies that $\tup{t,k+1,l} \in \Reach$, which yields $\tup{t',k,l} \in \Reach$ by applying $I$. Again by Lemma~\ref{lem:S4t-sigma-location}(1), we have 
    \[V(\sigma(t', \overline{\psi}(\overline{q}), \overline{\psi}(\overline{r}))) = \set{\tup{t',k,l}_0} \neq \ve.\]
    Thus, 
    \[\gf, V, w \md \phi^* \wedge \E\sigma(t', \overline{\psi}(\overline{q}), \overline{\psi}(\overline{r})).\]
    
    Next, suppose that 
    \[\gf, V, w \md \phi^* \wedge \E\sigma(t, \overline{\psi}(0), \overline{\psi}(\overline{r})).\]
    Then, $V(\phi^*) \neq \ve$ and $V(\sigma(t, \overline{\psi}(0), \overline{\psi}(\overline{r}))) \neq \ve$. As in the previous case, by a similar argument using Lemmas~\ref{lem:S4t-psi+-location} and \ref{lem:S4t-sigma-location}, we have $V(\psi(\overline{r})) = \{b_l\}$, $V(\psi'(\overline{r})) = V(\psi^+(\overline{r})) = \{b_{l+1}\}$, $V(\psi''(\overline{r})) = \{a_l\}$, $V(\psi'^+(\overline{r})) = \{b_{l+2}\}$ and $V(\psi''^+(\overline{r})) = \{a_{l+1}\}$ for some $l \in \omega$. 
    
    Thus, it follows from Lemma~\ref{lem:S4t-sigma-location}(1) that 
    \[V(\sigma(t, \overline{\psi}(0), \overline{\psi}(\overline{r}))) = \set{\tup{t,0,l}_0}.\]
    This implies that $\tup{t,0,l} \in \Reach$. By applying instruction $I$, we see that $\tup{t'',0,l} \in \Reach$. Again by Lemma~\ref{lem:S4t-sigma-location}(1), we have 
    \[V(\sigma(t'', \overline{\psi}(0), \overline{\psi}(\overline{r}))) = \set{\tup{t'',0,l}_0} \neq \ve.\]
    Thus, 
    \[
    \gf, V, w \md \phi^* \wedge \E\sigma(t'', \overline{\psi}(0), \overline{\psi}(\overline{r})),
    \]
    which entails that $\gf, V, w \md AxI$. Since $w$ and $V$ are arbitrarily chosen, $\gf \md AxI$. The other two cases follow analogously, and we conclude $\gf \md AxI$.
\end{proof}

Now we turn to syntactic analysis of $AxM$. Recall that $\psi(i)$ denotes the substitution result of $\psi(q, q', q'')[\phi_{b_i}/q, \phi_{b_{i+1}}/q', \phi_{a_i}/q'']$ and similarly for other formulas.

\begin{lemma} \label{lem:S4t-syntax-psi+}
    For any $i \in \omega$, 
    \begin{align*}
        \psi^+(i) &= \psi(i+1), \\
        \psi'^+(i) &= \psi'(i+1), \\
        \psi''^+(i) &= \psi''(i+1).
    \end{align*}
\end{lemma}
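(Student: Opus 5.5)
This is a purely syntactic identity, so I plan to prove it by unfolding the definitions and comparing the formulas literally; no semantics should be needed. The key observation is that the operation $(-)^+$ substitutes $(\tau(\overline{q}),\tau'(\overline{q}),\tau''(\overline{q}))$ for $(q,q',q'')$. The substitution $[i/\overline{q}]$ then replaces $(q,q',q'')$ by $(\phi_{b_i},\phi_{b_{i+1}},\phi_{a_i})$. Composing these substitutions, $\phi^+(i)$ is $\phi$ with $(q,q',q'')$ replaced by
\[
(\tau(\phi_{b_i},\phi_{b_{i+1}},\phi_{a_i}),\ \tau'(\phi_{b_i},\phi_{b_{i+1}},\phi_{a_i}),\ \tau''(\phi_{b_i},\phi_{b_{i+1}},\phi_{a_i})).
\]
This uses the standard fact that substitution composes: $(\phi[\overline{\chi}/\overline{q}])[\overline{\theta}/\overline{q}] = \phi[\overline{\chi}[\overline{\theta}/\overline{q}]/\overline{q}]$ whenever the only free variables of $\phi$ being substituted are $\overline{q}$.

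Next I compute these three components using Definition~\ref{def:formula-aibi}. First, $\tau(\phi_{b_i},\phi_{b_{i+1}},\phi_{a_i}) = \phi_{b_{i+1}}$ directly, because $\tau(\overline{q}) = q'$. Second, $\tau'(\phi_{b_i},\phi_{b_{i+1}},\phi_{a_i})$ is by definition $\phi_{b_{i+2}}$. Third, $\tau''(\phi_{b_i},\phi_{b_{i+1}},\phi_{a_i})$ is by definition $\phi_{a_{i+1}}$. For these two definitional steps, Definition~\ref{def:formula-aibi} applied with index $i+1\geq 1$ gives $\phi_{b_{i+2}} = \tau'(\phi_{b_i},\phi_{b_{i+1}},\phi_{a_i})$ and $\phi_{a_{i+1}} = \tau''(\phi_{b_i},\phi_{b_{i+1}},\phi_{a_i})$. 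This holds for every $i\in\omega$, including $i=0$, since the recursion starts at index $1$ using the base formulas $\phi_{b_0},\phi_{b_1},\phi_{a_0}$. So the composed substitution is exactly $[\phi_{b_{i+1}}/q,\phi_{b_{i+2}}/q',\phi_{a_{i+1}}/q'']$, which is $[i+1/\overline{q}]$.

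Applying this to $\psi$, $\psi'$, $\psi''$ gives $\psi^+(i)=\psi(i+1)$, and likewise for $\psi'$ and $\psi''$. One point needs checking. The formulas $\psi_\tri$ and hence $\psi,\psi',\psi''$ contain $\phi_{AB}$ and the modalities $\E,\A$, so I must confirm that $\overline{q}$ occurs in these only as displayed and not inside $\phi_{AB}$, as noted after the definition of $(-)^+$. I must also confirm that the variables $q_0,q_1,p_i$ occurring inside $\phi^*$ are disjoint from $\overline{q}$, so that the substitutions do not touch them. The $\phi_{AB}[q'/\phi^*]$ inside $\tau',\tau''$ contains $q'$, but this is harmless: the identity is one of definitions, and $\phi_{b_{i+2}}$ is \emph{defined} as that very substitution instance.

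The only possible obstacle is this bookkeeping about variable occurrences and the correct indexing at $i=0$. I expect both to be routine once the definitions are written out, so the proof amounts to literal equality of formulas rather than mere equivalence.
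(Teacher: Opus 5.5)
Your proposal is correct and takes the same route as the paper's proof. The paper likewise reads off $\tau(i)=\phi_{b_{i+1}}$, $\tau'(i)=\phi_{b_{i+2}}$ and $\tau''(i)=\phi_{a_{i+1}}$ from Definitions~\ref{def:formula-tau} and~\ref{def:formula-aibi}, then pushes $[i/\overline{q}]$ through $\psi_\tri(\overline{\tau}) \wedge \tau$ to get $\psi(i+1)$; your variable-disjointness checks are harmless, though not strictly needed, since composition of simultaneous substitutions holds unconditionally.
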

\begin{proof}
    We only show the first case. Note that $\phi_{AB}$ has no occurrence of $q, q', q''$. By Definitions~\ref{def:formula-tau}~and~\ref{def:formula-aibi}, we have
    \begin{align*}
        \tau(i) &= \phi_{b_{i+1}},\\
        \tau'(i) &= \phi_{b_{i+2}}, \\
        \tau''(i) &= \phi_{a_{i+1}}.
    \end{align*}
    So,
    \begin{align*}
        \psi^+(i) &= (\psi_\tri(\tau(\overline{q}), \tau'(\overline{q}), \tau''(\overline{q})) \land \tau(\overline{q}))[i/\overline{q}] \\
        &= \psi_\tri[\tau(i), \tau'(i), \tau''(i)] \land \tau(i) \\
        &= \psi_\tri(\phi_{b_{i+1}}, \phi_{b_{i+2}}, \phi_{a_{i+1}}) \land \phi_{b_{i+1}} \\
        &= \psi(\phi_{b_{i+1}}, \phi_{b_{i+2}}, \phi_{a_{i+1}}) \\
        &= \psi(i+1).
    \end{align*}
    The other two cases follow analogously. 
\end{proof}

By Lemma~\ref{lem:S4t-syntax-psi+}, we can prove the following lemma.

\begin{lemma} \label{lem:S4t-syntax-main}
    For any configuration $\tup{t,k,l} \in \Reach$, 
    \begin{align*}
        &\phi^* \wedge \E\sigma(s, \overline{\psi}(n), \overline{\psi}(m)) \to \phi^* \wedge \E\sigma(t, \overline{\psi}(k), \overline{\psi}(l)) \in \TL{Grz}\oplus AxM.
    \end{align*}
\end{lemma}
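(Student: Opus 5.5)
Write $\theta(t,k,l)$ for $\phi^* \wedge \E\sigma(t, \overline{\psi}(k), \overline{\psi}(l))$. We argue by induction on the length of a computation of $\M$ from $\tup{s,n,m}$ to $\tup{t,k,l}$. It suffices to show that for each single step $\tup{t,k,l} \to \tup{t_1,k_1,l_1}$ performed by an instruction $I \in \M$, the implication $\theta(t,k,l) \to \theta(t_1,k_1,l_1)$ belongs to $\TL{Grz}\oplus AxM$. The base case of the induction is the tautology $\theta(s,n,m) \to \theta(s,n,m)$. The inductive step then follows by chaining implications with modus ponens, i.e.\ by transitivity of $\to$ in any normal logic.

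For the single-step claim we first note that $AxI$ is a conjunct of $AxM$, so $AxI \in \TL{Grz}\oplus AxM$. Since the logic is closed under uniform substitution, every substitution instance of $AxI$ lies in it as well. We then split into the four instruction types.

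\textbf{Increment of the first register}, $I = t \to \tup{t',1,0}$. We substitute $[k/\overline{q}]$ and $[l/\overline{r}]$ into $AxI$, i.e.\ $\phi_{b_k}, \phi_{b_{k+1}}, \phi_{a_k}$ for $q,q',q''$ and the analogous formulas for $\overline{r}$. The antecedent becomes literally $\theta(t,k,l)$. By Lemma~\ref{lem:S4t-syntax-psi+}, the consequent $\phi^* \wedge \E\sigma(t', \overline{\psi^+}(k), \overline{\psi}(l))$ is syntactically identical to $\theta(t',k+1,l)$. One point needs checking: the substitution must not affect $\phi^*$, $\phi_{AB}$, $\phi_{R_i}$ or $\phi_{c_j}$. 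This holds provided the variables $q,q',q'',r,r',r''$ are chosen distinct from $p_0,p_1,p_2,q_0,q_1$ occurring in $\phi^*$, which we fix by convention. The second-register increment is symmetric.

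\textbf{Decrement with positive register}, $I = t\to\tup{t',-1,0}(\tup{t'',0,0})$ and $k\ge 1$. We use the first conjunct of $AxI$ with substitution $[k-1/\overline{q}]$. Its antecedent is then $\theta(t,k,l)$ by Lemma~\ref{lem:S4t-syntax-psi+}, and its consequent is $\theta(t',k-1,l)$.

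\textbf{Decrement with zero register.} If $k=0$, we use the second conjunct with $[l/\overline{r}]$. It directly gives $\theta(t,0,l)\to\theta(t'',0,l)$, since $\overline{\psi}(0)$ is already built in. The second-register decrement is symmetric.

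The main obstacle is only bookkeeping. We must verify that Lemma~\ref{lem:S4t-syntax-psi+} yields literal syntactic equality after substitution, so that no semantic reasoning is needed. We must also verify that the substitution commutes correctly with the nested $\psi_\tri$ and $\tau$ constructions. We would handle this by noting that $\psi^+(i)$ is defined as the substitution of $\tau(i),\tau'(i),\tau''(i)$, and that Definition~\ref{def:formula-aibi} makes these equal to $\phi_{b_{i+1}},\phi_{b_{i+2}},\phi_{a_{i+1}}$ on the nose.
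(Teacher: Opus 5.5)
Your proposal is correct and follows the paper's proof essentially verbatim. It uses induction on the length of the computation, chains the single-step implications, and for each instruction type instantiates the relevant conjunct of $AxI$ by $[k/\overline{q}, l/\overline{r}]$ (resp.\ $[k-1/\overline{q}, l/\overline{r}]$ or $[l/\overline{r}]$), invoking Lemma~\ref{lem:S4t-syntax-psi+} to identify the result syntactically. The one small omission is in the positive-decrement case, where the substitution should include $[l/\overline{r}]$ alongside $[k-1/\overline{q}]$, as in the increment case and in the paper.
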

\begin{proof}
    We prove by induction on the length of the computation of $\M$. The base case, where $\tup{t,k,l} = \tup{s,n,m}$, is clear since the corresponding formula is a tautology. Consider the computation of the form $\tup{s,n,m} \rightsquigarrow \tup{t,k,l} \to \tup{\Tilde{t},\Tilde{k},\Tilde{l}}$, where $I$ is the last instruction applied. By induction hypothesis, we have
    \begin{align*}
        &\phi^* \wedge \E\sigma(s, \overline{\psi}(n), \overline{\psi}(m)) \to \phi^* \wedge \E\sigma(t, \overline{\psi}(k), \overline{\psi}(l))  \in \TL{Grz}\oplus AxM.
    \end{align*}
    So, it suffices to show 
    \begin{align*}
        &\phi^* \wedge \E\sigma(t, \overline{\psi}(k), \overline{\psi}(l)) \to \phi^* \wedge \E\sigma(\Tilde{t}, \overline{\psi}(\Tilde{k}), \overline{\psi}(\Tilde{l}))  \in \TL{Grz}\oplus AxM.
    \end{align*}
    We distinguish cases according to the form of $I$.
    \begin{itemize}
        \item $I=t\to\tup{t',1,0}$. Then $\tup{\Tilde{t},\Tilde{k},\Tilde{l}} = \tup{t', k+1, l}$. From $AxI$ we have
        \begin{align*}
            &\phi^* \wedge \E\sigma(t, \overline{\psi}(\overline{q}), \overline{\psi}(\overline{r})) \to \phi^* \wedge \E\sigma(t', \overline{\psi^+}(\overline{q}), \overline{\psi}(\overline{r})) \in \TL{Grz}\oplus AxM.
        \end{align*}
        Applying the substitution $[k/\overline{q}, l/\overline{r}]$, with Lemma~\ref{lem:S4t-syntax-psi+}, we obtain
        \begin{align*}
            &\phi^* \wedge \E\sigma(t, \overline{\psi}(k), \overline{\psi}(l)) \to \phi^* \wedge \E\sigma(t', \overline{\psi}(k+1), \overline{\psi}(l))  \in \TL{Grz}\oplus AxM.
        \end{align*}
        
        \item $I=t\to\tup{t',0,1}$. This case is similar to the previous one.
        
        \item $I=t\to\tup{t',-1,0} (\tup{t'',0,0})$. We further distinguish cases depending on whether $k=0$.
        \begin{itemize}
            \item[\labelitemiii] $k \geq 1$. Then $\tup{\Tilde{t},\Tilde{k},\Tilde{l}} = \tup{t', k-1, l}$. From $AxI$ we have 
            \begin{align*}
                &\phi^* \wedge \E\sigma(t, \overline{\psi^+}(\overline{q}), \overline{\psi}(\overline{r})) \to \phi^* \wedge \E\sigma(t', \overline{\psi}(\overline{q}), \overline{\psi}(\overline{r})) \in \TL{Grz}\oplus AxM.
            \end{align*}
            Applying the substitution $[k-1/\overline{q}, l/\overline{r}]$, with Lemma~\ref{lem:S4t-syntax-psi+}, we obtain 
            \begin{align*}
                &\phi^* \wedge \E\sigma(t, \overline{\psi}(k), \overline{\psi}(l)) \to \phi^* \wedge \E\sigma(t', \overline{\psi}(k-1), \overline{\psi}(l))  \in \TL{Grz}\oplus AxM.
            \end{align*}
            \item[\labelitemiii] $k = 0$. Then $\tup{\Tilde{t},\Tilde{k},\Tilde{l}} = \tup{t'', 0, l}$. From $AxI$ we have 
            \begin{align*}
                &\phi^* \wedge \E\sigma(t,\overline{\psi}(0), \overline{\psi}(\overline{r})) \to \phi^* \wedge \E\sigma(t'',\overline{\psi}(0), \overline{\psi}(\overline{r})) \in \TL{Grz}\oplus AxM.
            \end{align*}
            Applying the substitution $[l/\overline{r}]$, we obtain 
            \begin{align*}
                &\phi^* \wedge \E\sigma(t,\overline{\psi}(0), \overline{\psi}(l)) \to \phi^* \wedge \E\sigma(t'',\overline{\psi}(0), \overline{\psi}(l))  \in \TL{Grz}\oplus AxM.
            \end{align*}
        \end{itemize}
        
        \item $I=t\to\tup{t',0,-1} (\tup{t'',0,0})$. This case is similar to the previous one.

    \end{itemize}
    Thus, we conclude our induction.
\end{proof}

\subsection{Reduction} \label{subsec-3}

Finally, we construct the desired reduction and show the first main theorem, Theorem~\ref{thm:undec-scheme-grzt}. The other main theorem, Theorem~\ref{thm:undec-prop-S4t-=tab} will be shown by modifying this reduction. For each configuration $\tup{t,k,l}$ of $\M$, we define a tense logic $L(t,k,l)$ in a computable way.

\begin{definition}
For each configuration $\tup{t,k,l}$, we define
\setcounter{equation}{0}
\renewcommand{\theequation}{\Roman{equation}}
\begin{align}
    L(t,k,l) \coloneq~ & \TL{Grz} \oplus \axiom{bw}^\partial_5 \oplus \axiom{br}_7 \notag \\
    & \oplus AxM \\
    & \oplus (\phi^* \wedge \E\sigma(s, \overline{\psi}(n), \overline{\psi}(m)) \to \phi^* \wedge \E\sigma(t, \overline{\psi}(k), \overline{\psi}(l))) \to \lnot \phi^* \\ 
    & \oplus \phi^* \to \psi_\tri(\phi_{b_0}, \phi_{b_1}, \phi_{a_0}) \\
    & \oplus \phi^* \to (\psi_\tri(\overline{q}) \to \psi_\tri^+(\overline{q})).
\end{align}
\end{definition}

The intuitive meaning of axioms of $L(t,k,l)$ is as follows.

\begin{itemize}
    \item (I) encodes the instructions in $\M$,
    \item (II) relates the logic $L(t,k,l)$ to the configuration $\tup{t,k,l}$,
    \item (III) finds three points resembling $a_0,b_0,b_1$ in $\gf$,
    \item (IV) forces an infinite descending chain from the three points provided by (III), which would imply Kripke incompleteness.
\end{itemize}

It is clear that the reduction $\tup{t,k,l} \mapsto L(t,k,l)$ is computable. Recall from the statement of Theorem~\ref{thm:undec-scheme-grzt} that, to show the correctness of the reduction, we need to show the following two statements:
\begin{enumerate}
    \item[(*)] if $\tup{t,k,l} \in \Reach$, then $L(t,k,l) = \TL{Grz} \oplus \set{\axiom{bd}_2, \axiom{bw}_2, \axiom{bw}^\partial_5,\axiom{br}_7}$;
    \item[(**)] if $\tup{t,k,l} \not\in \Reach$, then $L(t,k,l)$ is neither Kripke complete nor decidable.
\end{enumerate}

Let us begin with (*), which is easier.

\begin{lemma}\label{lem:tklReach-tabular}
    For any $\tup{t,k,l} \in \Reach$, we have 
    \[L(t,k,l) = \TL{Grz} \oplus \set{\axiom{bd}_2, \axiom{bw}_2, \axiom{bw}^\partial_5,\axiom{br}_7}.\]
\end{lemma}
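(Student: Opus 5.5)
Both inclusions have to be shown. Throughout, $L_0 \coloneqq \TL{Grz} \oplus \set{\axiom{bd}_2, \axiom{bw}_2, \axiom{bw}^\partial_5,\axiom{br}_7}$.

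\emph{Step 1: $L_0 \subseteq L(t,k,l)$.} Since $\axiom{bw}^\partial_5$ and $\axiom{br}_7$ are axioms of $L(t,k,l)$, it suffices to derive $\axiom{bd}_2$ and $\axiom{bw}_2$, or equivalently $\lnot\phi^*$, since $\lnot\phi^* \leftrightarrow \axiom{bd}_2 \wedge \axiom{bw}_2$ propositionally. Because $\tup{t,k,l} \in \Reach$, Lemma~\ref{lem:S4t-syntax-main} gives
\[\phi^* \wedge \E\sigma(s, \overline{\psi}(n), \overline{\psi}(m)) \to \phi^* \wedge \E\sigma(t, \overline{\psi}(k), \overline{\psi}(l)) \in \TL{Grz}\oplus AxM \subseteq L(t,k,l).\]
Axiom (II) says that this implication implies $\lnot\phi^*$. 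By modus ponens, $\lnot\phi^* \in L(t,k,l)$, and hence $\axiom{bd}_2, \axiom{bw}_2 \in L(t,k,l)$.

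\emph{Step 2: $L(t,k,l) \subseteq L_0$.} All base axioms of $L(t,k,l)$ already lie in $L_0$, and $\lnot\phi^* \in L_0$. Axioms (III) and (IV) have the form $\phi^* \to \chi$, so they follow propositionally from $\lnot\phi^*$. Each conjunct of $AxM$ has the form $\phi^* \wedge \alpha \to \beta$, so it also follows from $\lnot\phi^*$. Axiom (II) has consequent $\lnot\phi^*$, so it is derivable in $L_0$ as well. All four extra axioms therefore belong to $L_0$, which gives $L(t,k,l) \subseteq L_0$.

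The proof is essentially syntactic, and the only nontrivial ingredient is Lemma~\ref{lem:S4t-syntax-main}. The point to check carefully is that the formula $\phi^*$ in Definition~\ref{def-good-val} is literally $\lnot\axiom{bd}_2 \lor \lnot\axiom{bw}_2$, with the same variables as those used for $\axiom{bd}_2$ and $\axiom{bw}_2$ in $L_0$. Even if the variable names differed, the axioms are schemes closed under substitution, so Step 1 would still recover the standard instances of $\axiom{bd}_2$ and $\axiom{bw}_2$.
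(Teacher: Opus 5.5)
Your proposal is correct and follows the paper's proof essentially step for step. For $\supseteq$, you obtain $\lnot\phi^*$ from Lemma~\ref{lem:S4t-syntax-main} and axiom (II) by modus ponens, which yields $\axiom{bd}_2$ and $\axiom{bw}_2$; for $\subseteq$, you observe that every extra axiom of $L(t,k,l)$ is a propositional consequence of $\lnot\phi^*$, and $\lnot\phi^*$ lies in $\TL{Grz} \oplus \set{\axiom{bd}_2, \axiom{bw}_2, \axiom{bw}^\partial_5,\axiom{br}_7}$.
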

\begin{proof}
    Let $\tup{t,k,l} \in \Reach$. By Lemma~\ref{lem:S4t-syntax-main} and axiom (II), we have $\lnot \phi^* \in L(t,k,l)$. Since $\phi^* = \lnot \axiom{bd}_2 \lor \lnot \axiom{bw}_2$, this implies $\axiom{bd}_2, \axiom{bw}_2 \in L(t,k,l)$, so the inclusion $\supseteq$ holds. For the other inclusion, it suffices to observe that $\lnot \phi^*$ is provable from $\axiom{bd}_2$ and $\axiom{bw}_2$. All axioms (I), (II), (III), and (IV) are provable from $\lnot \phi^*$. Thus, we conclude the proof.
\end{proof}

The following lemmas show (**), where the general frame $\gf$ is used to witness the Kripke incompleteness and the undecidability.

\begin{lemma} \label{lem:S4t-gf-Ltkl}
    For any $\tup{t,k,l} \not\in \Reach$, we have $\gf \md L(t,k,l)$.
\end{lemma}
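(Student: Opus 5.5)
We must check that $\gf$ validates every axiom of $L(t,k,l)$. We go through them in order, invoking the lemmas of Sections~\ref{subsec-1} and \ref{subsec-2}.

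\emph{The base logic.} First we check $\gf \md \TL{Grz} \oplus \axiom{bw}^\partial_5 \oplus \axiom{br}_7$. The underlying relation $R$ is a reflexive-transitive closure, hence reflexive and transitive. It is antisymmetric because every generating edge strictly decreases a suitable rank: in the ladder, index decreases downward, and the remaining parts are acyclic by inspection. Since $\gf$ is refined, Proposition~\ref{prop:grz} reduces validity of $\axiom{grz}$ and $\axiom{grz}^\partial$ to the absence of infinite strict chains in $R$ and in $\inver{R}$. Ascending chains in $R$ are finite, since from any point one can only move up towards $a_0,b_0$, the $x_i$'s, or the $\tup{t,k,l}_i$'s, in finitely many steps. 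Ascending chains in $\inver{R}$ are infinite descending ladder chains, and these pass through infinitely many $a_i,b_i$. Such chains are not Kripke-refuted as such, but on the general frame the standard argument still works: any admissible $U$ meets $AB$ in a finite or cofinite set, so a refutation of $\axiom{grz}^\partial$ would require a set alternating infinitely along the ladder, which is not admissible. We would carry this out by checking $\axiom{grz}^\partial$ directly on admissible valuations, as in the usual proof that $\ML{Grz}$ is valid on the corresponding general frame.

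For $\axiom{bw}^\partial_5$ we check that each $R^{-1}[x]$ contains no anti-chain of size greater than $5$. The worst case is the set of predecessors of points such as $a_0$ or $b_0$: the ladder contributes at most about two incomparable predecessors per level, and the extra incomparable predecessors are $c'_i$, $x_4$, $x_5$, $e$. We verify this by listing cases. For $\axiom{br}_7$ we use the remark already made that any two points are joined by an $\R$-path of length at most $7$.

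\emph{Axioms (I), (III), (IV).} Axiom (I) holds by Lemma~\ref{lem:S4t-gf-AxM}. For (III) and (IV), take any valuation $V$. If $V(\phi^*)=\ve$, the implications are trivially true everywhere. Otherwise $V$ is good. Lemma~\ref{lem:S4t-formula-location-1} then gives $V(\phi_{b_0})=\set{b_0}$, $V(\phi_{b_1})=\set{b_1}$ and $V(\phi_{a_0})=\set{a_0}$, so Lemma~\ref{lem:S4t-psi-location} (case (a) with $i=0$) gives $V(\psi_\tri(\phi_{b_0},\phi_{b_1},\phi_{a_0}))=W$, and (III) holds. For (IV), if $V(\psi_\tri(\overline{q}))\neq\ve$, then Lemma~\ref{lem:S4t-psi+-location} (or directly Lemma~\ref{lem:formula-tau} combined with Lemma~\ref{lem:S4t-psi-location}) shows that $V(\tau,\tau',\tau'')$ is again a triangle, so $V(\psi_\tri^+(\overline{q}))=W$.

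\emph{Axiom (II).} This is the key step, and it is where $\tup{t,k,l}\notin\Reach$ is used. Again fix $V$. If $V$ is not good, $\lnot\phi^*$ holds everywhere and (II) is true. If $V$ is good, we show that the antecedent of (II) fails at some point. That antecedent is a conditional, so it suffices to find a point where $\phi^* \wedge \E\sigma(s,\overline{\psi}(n),\overline{\psi}(m))$ is true and $\phi^*\wedge\E\sigma(t,\overline{\psi}(k),\overline{\psi}(l))$ is false. By Lemma~\ref{lem:S4t-formula-location-2}(1) and Lemma~\ref{lem:S4t-psi-location}, $V(\overline{\psi}(n))=\tup{\set{b_n},\set{b_{n+1}},\set{a_n}}$, and similarly for $m$, $k$, $l$. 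Since $\tup{s,n,m}\in\Reach$, Lemma~\ref{lem:S4t-sigma-location}(1)(i) makes $V(\sigma(s,\dots))$ nonempty, so $\E\sigma(s,\dots)$ holds everywhere. Since $\tup{t,k,l}\notin\Reach$, Lemma~\ref{lem:S4t-sigma-location}(1)(ii) gives $V(\sigma(t,\dots))=\ve$. Hence at any $w\in V(\phi^*)$ the antecedent of (II) fails, so (II) holds at $w$. At points outside $V(\phi^*)$, (II) holds because its consequent $\lnot\phi^*$ is true there.

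The main obstacle is the base-logic verification, in particular $\axiom{grz}^\partial$ on the infinite descending ladder, which needs the finite/cofinite restriction on $A$. Everything else follows directly from the lemmas already established.
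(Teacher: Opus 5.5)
Your proposal is correct and follows the paper's proof essentially step by step. You check the base axioms directly, with $\axiom{grz}^\partial$ resting on the finite/cofinite restriction on $AB$; you use Lemma~\ref{lem:S4t-gf-AxM} for (I) and Lemmas~\ref{lem:S4t-formula-location-1}, \ref{lem:S4t-psi-location} and~\ref{lem:S4t-psi+-location} for (III) and (IV) under good valuations; and for (II) you use that a good valuation makes $\sigma(s,\overline{\psi}(n),\overline{\psi}(m))$ true at $\tup{s,n,m}_0$ while $\sigma(t,\overline{\psi}(k),\overline{\psi}(l))$ is nowhere true by Lemma~\ref{lem:S4t-sigma-location}.

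There are two harmless slips in the base-logic part:
\begin{enumerate}
\item Proposition~\ref{prop:grz} concerns Kripke frames, so it cannot reduce $\axiom{grz}^\partial$ on $\gf$ to a chain condition. In fact the Kripke frame $(W,R)$ refutes $\axiom{grz}^\partial$, and only the admissibility argument you then give works.
\item Your heuristic for $\axiom{bw}^\partial_5$ is off. The point $e$ is comparable with almost every point, and $\inver{R}[a_0]$, $\inver{R}[b_0]$ only have width $4$. Antichains of size $5$ can occur inside $\inver{R}[\tup{t,k,l}_0]$, which is still within the bound.
\end{enumerate}
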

\begin{proof}
    It is clear that $\gf \md \axiom{grz} \land \axiom{br}_7$; for the validity of $\axiom{grz}^\partial$, use the fact that admissible sets in $\gf$ are finite or cofinite in $AB = \set{a_i,b_i : i \in \omega}$. To show that $\gf \md \axiom{bw}^\partial_5$, it suffices to observe that there is no $w \in W$ such that $\inver{R}[w]$ contains an anti-chain of more than $5$ points.
    By Lemma~\ref{lem:S4t-gf-AxM}, $\gf \md AxM$. Suppose $\gf, V, w \md \phi^*$ for some valuation $V$ on $\gf$ and $w \in W$. Then $V$ is good. By Lemma~\ref{lem:S4t-formula-location-1}, we obtain $V(\phi_{b_0}) = \set{b_0}$, $V(\phi_{b_1}) = \set{b_{1}}$, and $V(\phi_{a_0}) = \set{a_{0}}$. By Lemma~\ref{lem:S4t-psi-location}, we have $V(\psi_\tri(\phi_{b_0},\phi_{b_1},\phi_{a_0})) = W$, and so $\gf,V,w \md \psi_\tri(\phi_{b_0},\phi_{b_1},\phi_{a_0})$. Thus, $\gf \md \text{(III)}$. Similarly, by Lemma~\ref{lem:S4t-psi+-location}, we have $\gf \md \text{(IV)}$.
    
    It remains to show that $\gf \md \text{(II)}$. Take any $\tup{t,k,l} \not\in \Reach$. Then the point $\tup{t,k,l}_0$ does not exist in $\gf$, while $\tup{s,n,m}_0$ always exists since $\tup{s,n,m} \rightsquigarrow \tup{s,n,m}$. We show that the contrapositive of (II) is valid in $\gf$. Suppose $\gf, V, w \md \phi^*$ for some valuation $V$ on $\gf$ and $w \in W$. Then, $V$ is good. It follows from Lemmas~\ref{lem:S4t-formula-location-1}, \ref{lem:S4t-formula-location-2}, and \ref{lem:S4t-sigma-location} that 
    \[
    \gf, V, \tup{s,n,m}_0 \md \sigma(s, \overline{\psi}(n), \overline{\psi}(m))
    \] 
    and 
    \[
    V(\sigma(t, \overline{\psi}(k), \overline{\psi}(l))) = \ve.
    \]
    Thus, $\gf, V, w \nmd \phi^* \wedge \E\sigma(s, \overline{\psi}(n), \overline{\psi}(m)) \to \phi^* \wedge \E\sigma(t, \overline{\psi}(k), \overline{\psi}(l))$, which entails that $\gf,V,w \md \text{(II)}$. Since $V$ and $w$ were arbitrarily chosen, we conclude that $\gf \md \text{(II)}$.
\end{proof}

Using this lemma, we show that $L(t,k,l)$ is Kripke incomplete (Lemma~\ref{lem:S4t-Ltkl-incomplete}) and undecidable (Lemma~\ref{lem:S4t-Ltkl-undecidable}) for any configuration $\tup{t,k,l} \not\in \Reach$.

\begin{lemma} \label{lem:S4t-Ltkl-incomplete}
    For any $\tup{t,k,l} \not\in \Reach$, the logic $L(t,k,l)$ is Kripke incomplete.
\end{lemma}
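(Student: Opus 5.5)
We argue by exhibiting a formula that is valid on every Kripke frame of $L(t,k,l)$ but fails on the general frame $\gf$. The natural candidate is $\lnot\phi^*$. By Lemma~\ref{lem:S4t-gf-Ltkl}, $\gf \md L(t,k,l)$, and by Lemma~\ref{lem:S4t-gf-refutes-phi*}, $\gf \nmd \lnot\phi^*$, so $\lnot\phi^* \notin L(t,k,l)$. It therefore suffices to show that every Kripke frame $\ff=(X,R)$ with $\ff \md L(t,k,l)$ validates $\lnot\phi^*$; then $\lnot\phi^* \in \Log(\Fr(L(t,k,l)))\setminus L(t,k,l)$, which gives Kripke incompleteness.

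Suppose towards a contradiction that $\ff \md L(t,k,l)$ and there are a valuation $V$ and a point $w$ with $\ff,V,w \md \phi^*$. We may restrict to the rooted (connected) component generated by $w$ under $\R$. Since $\ff \md \axiom{br}_7$, the modalities $\E$ and $\A$ act as universal modalities on this component, so $\psi_\tri(\overline{q})$ is either true everywhere or nowhere there. By axiom (III), $\psi_\tri(\phi_{b_0},\phi_{b_1},\phi_{a_0})$ holds at $w$ and hence everywhere. Since $\phi^*$ is true at $w$, axiom (IV) gives $\psi_\tri(\overline{q}) \to \psi_\tri^+(\overline{q})$ at $w$ under every valuation of $\overline{q}$ agreeing with $V$ elsewhere. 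Iterating via the substitutions $[i/\overline{q}]$ and Lemma~\ref{lem:S4t-syntax-psi+}, we obtain that $\psi_\tri(i)$ holds at $w$, hence everywhere, for every $i\in\omega$. Unpacking the conjuncts of $\psi_\tri(i)$, for each $i$ there are points $u_i \md \phi_{b_i}$ and $u_{i+1}' \md \phi_{b_{i+1}}$ with $u_i' R$-related so that $u'_{i+1}$ sees some $\phi_{b_i}$-point but no $\phi_{b_i}$-point sees a $\phi_{b_{i+1}}$-point (the conjunct $\phi_{AB}\wedge q' \wedge \D q \wedge \bb\neg q$ instantiated at $q=\phi_{b_i}, q'=\phi_{b_{i+1}}$).

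From this we extract an infinite strictly ascending $R$-chain. Choose $y_0 \md \phi_{b_0}$. Given $y_i\md\phi_{b_i}$, pick $y_{i+1}\md\phi_{b_{i+1}}\wedge\D\phi_{b_i}\wedge\bb\neg\phi_{b_i}$; by transitivity and the fact that no $\phi_{b_i}$-point is seen from $\phi_{b_{i+1}}$-points in the backward direction, the points $y_i$ are pairwise distinct and form a chain going downward, i.e., an infinite ascending chain for $\inver{R}$. The main obstacle is this step: the existential witnesses of $\D\phi_{b_i}$ need not be the previously chosen $y_i$, so one must instead argue with the sets $V(\phi_{b_i})$. I would first check that any $\phi_{b_{i+1}}$-point sees some $\phi_{b_i}$-point while no $\phi_{b_i}$-point sees (in the sense relevant to $\bb$) a $\phi_{b_{i+1}}$-point, then apply König-style dependent choice backwards: define $Y_i$ as the set of $\phi_{b_i}$-points, note each point of $Y_{i+1}$ has an $R$-successor in $Y_i$ and the $Y_i$ are pairwise disjoint. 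This gives, for every $N$, a strict $R$-chain of length $N$ ending in $Y_0$; to get a genuinely infinite chain contradicting $\axiom{grz}^\partial$ (Noetherianity of $\inver{R}$, Proposition~\ref{prop:grz}) I would instead invoke $\axiom{grz}^\partial$ directly on the Kripke frame with the valuation $p \mapsto X\setminus\bigcup_i Y_i$ style argument, or more simply use that a Kripke frame validating $\axiom{grz}^\partial$ is conversely well-founded so every nonempty set — here $\bigcup_i Y_i$ — has an $\inver{R}$-maximal element, which is impossible since each $Y_{i+1}$-point lies strictly below some... and each $Y_i$-point is dominated by a $Y_{i+1}$-point (from the triangle structure, e.g. the conjunct giving $\bd q'$ at $q$-points). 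This contradiction shows $\ff \md \lnot\phi^*$, completing the proof.
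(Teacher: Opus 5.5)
\textbf{There is a genuine gap: the step from ``$\psi_\tri(i)$ holds for every $i$'' to an infinite descending chain does not go through.}

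Your setup and the substitution step are fine. The step differs slightly from the paper: the computation in Lemma~\ref{lem:S4t-syntax-psi+} does give $\psi_\tri^+(i)=\psi_\tri(i+1)$, so (III) together with substitution instances of (IV) yields $\psi_\tri(i)$ throughout the component for every $i$.

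The problem is extracting the chain. On an arbitrary Kripke frame nothing forces $Y_i=V(\phi_{b_i})$ to be a singleton. Moreover, each conjunct of $\psi_\tri(i)$ is an $\E$-statement about one particular witness. Thus $\bb\neg q$ only says that no $\phi_{b_i}$-point sees \emph{that} $\phi_{b_{i+1}}$-witness. Likewise $\bd q'$ only says that \emph{some} $\phi_{b_i}$-point is seen by a $\phi_{b_{i+1}}$-point. So none of the three claims you rely on follows: that no $\phi_{b_i}$-point sees a $\phi_{b_{i+1}}$-point, that the $Y_i$ are pairwise disjoint, or that every $Y_i$-point is dominated by a $Y_{i+1}$-point.

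What you do legitimately have is that every $\phi_{b_{i+1}}$-point satisfies $\D\phi_{b_i}$, by the $\D q'$ conjunct of $\tau'$ and the definition of $\phi_{b_1}$. That only gives arbitrarily long finite chains, which is compatible with $\axiom{grz}^\partial$. The minimal-element argument on $\bigcup_i Y_i$ needs precisely the unproved domination property. As you noticed, the witness of $\D\phi_{b_i}$ at stage $i+1$ need not be the point chosen at stage $i$, and reasoning about the sets $Y_i$ alone does not repair this.

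\textbf{The missing idea} is to use (IV) under arbitrary valuations of $\overline q$, which you mention but never exploit, so that $q,q',q''$ name the points already chosen.
\begin{enumerate}
\item Let $u_0,w_0$ be the $q$- and $q''$-witnesses of $\psi_\tri(\phi_{b_0},\phi_{b_1},\phi_{a_0})$, and let $u_1\md\phi_{b_1}$ with $u_1Ru_0$.
\item Set $V'(q)=\set{u_0}$, $V'(q')=\set{u_1}$, $V'(q'')=\set{w_0}$, keeping $V$ elsewhere. Since $q,q',q''$ do not occur in $\phi^*,\phi_{a_0},\phi_{b_0},\phi_{b_1},\phi_{AB}$, the local conditions at these witnesses give $\psi_\tri(\overline q)$ at $w$.
\item Axiom (IV) then yields $\psi_\tri(\tau,\tau',\tau'')$. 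Its $\tau'$-witness $u_2$ satisfies $\D q'\wedge\bb\neg q'$; the second conjunct comes from the $\bb\neg\tau$ conjunct, since $\tau=q'$. Hence $u_2Ru_1$ and $u_2\neq u_1$.
\item The $\tau''$-witness $w_1$ satisfies $\D q''\wedge\neg q''$, so $w_1Rw_0$ and $w_1\neq w_0$.
\item Re-valuate $\overline q$ as $\set{u_1},\set{u_2},\set{w_1}$. The needed local conditions come from the conjuncts of $\psi_\tri(\overline\tau)$ at these witnesses, so the step can be iterated.
\end{enumerate}
Each new witness is thereby forced to see the previously chosen point. This produces $u_{i+1}Ru_i$ with $u_{i+1}\neq u_i$ for all $i$, hence by antisymmetry an infinite descending chain. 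That contradicts $\axiom{grz}^\partial$ by Proposition~\ref{prop:grz}. This singleton re-valuation is exactly the paper's argument.
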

\begin{proof}
    Let $\tup{t,k,l}$ be a configuration such that $\tup{t,k,l} \not\in \Reach$. Then, $\lnot \phi^* \notin L(t,k,l)$ since $\gf \md L(t,k,l)$ by Lemma~\ref{lem:S4t-gf-Ltkl} and $\gf \nmd \lnot \phi^*$ by Lemma~\ref{lem:S4t-gf-refutes-phi*}. So, it suffices to show $\neg\phi^*\in\Log(\Fr(L(t,k,l)))$. We show that for any rooted Kripke frame $\ff$ validating $L(t,k,l)$, the refutation of $\neg\phi^*$ would yield descending chains in $\ff$, which contradicts the validity of $\axiom{grz}^\partial$ in $\ff$. See Figure~\ref{fig:RN-construction} for an illustration of the proof.

    Let $\ff = (U, S)$ be a rooted Kripke frame such that $\ff \md L(t,k,l)$. Suppose that $\ff \nmd \neg\phi^*$ for a contradiction. Then $\ff,V,w \md \phi^*$ for some valuation $V$ on $\ff$ and $w \in U$. 
    By (III), we have $\ff,V,w \md \psi_\tri(\phi_{b_0},\phi_{b_1},\phi_{a_0})$. Let $\mm = (\ff,V)$. Then there exist $w_0,u_0 \in U$ such that 
    \begin{align*}
        \mm, w_0 &\md \phi_{a_0} \wedge \phi_{AB} \wedge \B \lnot \phi_{b_0} \land \bb \lnot \phi_{b_0} \land \B \lnot \phi_{b_1} \land \bb \lnot \phi_{b_1}, \\
        \mm, u_0 &\md \phi_{b_0} \wedge \phi_{AB} \wedge \B \lnot \phi_{a_0} \land \bb \lnot \phi_{a_0} \land \B \lnot \phi_{b_1} \land \bd \phi_{b_1}.
    \end{align*}
    By $\mm, u_0 \md \bd\phi_{b_1}$, there exists $u_1 \in \inver{S}[u_0]$ such that $\mm, u_1 \md \phi_{b_1}$. Let $U_1 = \set{w_0,u_0,u_1}$. Then clearly, $S\rsto U_1= \set{(w_0,w_0),(u_0,u_0),(u_1,u_1),(u_1,u_0)}$. Note that $\ff \md \axiom{br}_{7}$. Thus, the modalities $\A$ and $\E$ work as master modalities in $\ff$. Since $\mm,w \md \A(\phi_{b_1} \to \phi_{AB})$, we have $\mm,u_1 \md \phi_{AB}$.
    Since $q, q', q''$ do not occur in any of $\phi^*$, $\phi_{a_0}$, $\phi_{b_0}$, $\phi_{b_1}$, and $\phi_{AB}$ (see Definition~\ref{def:S4t-char-formulas-1}), we may vary the value of $q,q',q''$ while keeping the value of these formulas. For two valuations $V_1$ and $V_2$, we write $V_1 \equiv V_2$ if they agree on propositional variables except for $q,q',q''$.
    
    Let $V' \equiv V$ be a valuation such that $V'(q)=\set{u_0}$, $V'(q')=\set{u_1}$, and $V'(q'')=\set{w_0}$. Let $\mm' = (\ff, V')$. Since $V' \equiv V$ and $S\rsto U_1 = \set{(w_0,w_0),(u_0,u_0),(u_1,u_1),(u_1,u_0)}$, we see that $\mm',w \md \phi^* \wedge \psi_\tri(\overline{q})$. By $\ff \md \text{(IV)}$, we obtain that $\mm',w \md \psi_\tri^+(\overline{q})$, and so $\mm',w \md \psi_\tri(\overline{\tau})$. By the definition of $\psi_\tri$, there exist $w_1$ and $u_2$ such that $\mm',w_1 \md \tau''(\overline{q})$ and $\mm',u_2 \md \tau'(\overline{q})$. Let $U_2 = \set{w_0,u_0,u_1,w_1,u_2}$. It follows from the definition of $\tau'$ and $\tau''$ that $w_1 \neq u_2$ and $w_1,u_2 \not\in U_1$. Moreover, $S\rsto U_2$ is the reflexive-transitive closure of $\set{(u_2,u_1),(u_1,u_0),(w_1,w_0),(u_2,w_0),(w_1,u_0)}$. So, $\phi_{AB}$ is satisfied at $w_1,u_1,u_2$ in $\mm'$.

    Thus, by repeating the argument above (e.g., consider the valuation $V'' \equiv V$ such that $V''(q)=\set{u_1}$, $V''(q')=\set{u_2}$ and $V''(q'')=\set{w_1}$ for the next step), we obtain two infinite descending chains $\set{w_i: i \in \omega}$ and $\set{u_i: i \in \omega}$ in $\ff$. Therefore, by Proposition~\ref{prop:grz}, we have $\ff\nmd \axiom{grz}^\partial$, which contradicts the assumption that $\ff \md L(t,k,l)$. Hence, $\lnot \phi^* \in \Log(\Fr(L(t,k,l)))$, and we conclude that $L(t,k,l)$ is Kripke incomplete.
\end{proof}

\begin{figure}[ht]
\[
    \begin{tikzpicture}[scale=1]

        \begin{scope}[shift={(-7,0)}]
            \node (w) at (0,4)[label=left:{$\mm~~w$}, label=right:{$\psi_\tri(\phi_{b_0},\phi_{b_1},\phi_{a_0})$}]{$\circ$};
            \node (al0) at (0,2.5)[label=left:$\phi_{a_0}~~w_{0}$]{$\circ$};
            \node (bl0) at (1.5,2.5)[label=right:$u_{0}~~\phi_{b_0}$]{$\circ$};
            \node (bl1) at (1.5,1.5)[label=right:$u_{1}~~\phi_{b_1}$]{$\circ$};
            \draw [->] (bl1) -- (bl0);
            \draw [dotted] (-2,1.1) rectangle (3.5,4.5);
        \end{scope}

        \draw[->, line width=1pt] (-3.3,3) -- (-2,3);

        \begin{scope}[shift={(0,0)}]
            \node (w) at (0,4)[label=left:{$\mm'~~w$}, label=right:{$\psi_\tri(\overline{q})$}]{$\circ$};
            \node (al0) at (0,2.5)[label=left:$q''~~w_{0}$]{$\circ$};
            \node (bl0) at (1.5,2.5)[label=right:$u_{0}~~q$]{$\circ$};
            \node (bl1) at (1.5,1.5)[label=right:$u_{1}~~q'$]{$\circ$};
            \draw [->] (bl1) -- (bl0);
            \draw [dotted] (-1.7,1.1) rectangle (3.2,4.5);
        \end{scope}

        \draw[->, line width=1pt] (.75,.8) -- (.75,-.3);

        \begin{scope}[shift={(0,-5)}]
            \node (w) at (0,4)[label=left:{$\mm'~~w$}, label=right:{$\psi_\tri^+(\overline{q})$}]{$\circ$};

            \node (al0) at (0,2.5)[label=left:$q''~~w_{0}$]{$\circ$};
            \node (al1) at (0,1.5)[label=left:$\tau''~~w_{1}$]{$\circ$};
            \node (bl0) at (1.5,2.5)[label=right:$u_{0}~~q$]{$\circ$};
            \node (bl1) at (1.5,1.5)[label=right:$u_{1}~~q'$]{$\circ$};
            \node (bl2) at (1.50,0.5)[label=right:$u_{2}~~\tau'$]{$\circ$};
            \draw [->] (al1) -- (al0);
            \draw [->] (bl1) -- (bl0);
            \draw [->] (bl2) -- (bl1);
            \draw [->] (al1) -- (bl0);
            \draw [->] (bl2) -- (al0);
            \draw [dotted] (-1.7,-.5) rectangle (3.2,4.5);
        \end{scope}

        \draw[<-, line width=1pt] (-3.3,-3) -- (-2,-3);

        \begin{scope}[shift={(-7,-5)}]
            \node (w) at (0,4)[label=left:{$\mm''~~w$}, label=right:{$\psi_\tri(\overline{q})$}]{$\circ$};

            \node (al0) at (0,2.5)[label=left:$w_{0}$]{$\circ$};
            \node (al1) at (0,1.5)[label=left:$q''~~w_{1}$]{$\circ$};
            \node (bl0) at (1.5,2.5)[label=right:$u_{0}$]{$\circ$};
            \node (bl1) at (1.5,1.5)[label=right:$u_{1}~~q$]{$\circ$};
            \node (bl2) at (1.50,0.5)[label=right:$u_{2}~~q'$]{$\circ$};
            \draw [->] (al1) -- (al0);
            \draw [->] (bl1) -- (bl0);
            \draw [->] (bl2) -- (bl1);
            \draw [->] (al1) -- (bl0);
            \draw [->] (bl2) -- (al0);
            \draw [dotted] (-1.7,-.5) rectangle (3.2,4.5);
        \end{scope}

        \node (x) at (-10.5,-3){$\cdots$};
        \draw[<-, line width=1pt] (x) -- (-9,-3);
    \end{tikzpicture}
\]
\caption{Proof sketch of Lemma~\ref{lem:S4t-Ltkl-incomplete}}
\label{fig:RN-construction}
\end{figure}

\begin{lemma}\label{lem:S4t-Ltkl-undecidable}
    For any $\tup{t,k,l} \not\in \Reach$, the logic $L(t,k,l)$ is undecidable.
\end{lemma}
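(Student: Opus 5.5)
We reduce $\Reach$ to membership in $L(t,k,l)$. Fix $\tup{t,k,l} \notin \Reach$ and consider, for each configuration $\tup{t',k',l'}$, the formula
\[\chi(t',k',l') \coloneq \phi^* \wedge \E\sigma(s, \overline{\psi}(n), \overline{\psi}(m)) \to \phi^* \wedge \E\sigma(t', \overline{\psi}(k'), \overline{\psi}(l')).\]
The map $\tup{t',k',l'} \mapsto \chi(t',k',l')$ is clearly computable. We claim that
\[\tup{t',k',l'} \in \Reach \iff \chi(t',k',l') \in L(t,k,l).\]
Given this, a decision procedure for $L(t,k,l)$ would decide $\Reach$, contradicting Theorem~\ref{Thm:undec-minsky}.

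For the forward direction, assume $\tup{t',k',l'} \in \Reach$. Lemma~\ref{lem:S4t-syntax-main} gives $\chi(t',k',l') \in \TL{Grz} \oplus AxM$. Since $AxM$ is axiom (I) of $L(t,k,l)$, we get $\TL{Grz}\oplus AxM \subseteq L(t,k,l)$, and the claim follows.

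For the converse, assume $\tup{t',k',l'} \notin \Reach$. By Lemma~\ref{lem:S4t-gf-Ltkl}, $\gf \md L(t,k,l)$, so it suffices to refute $\chi(t',k',l')$ on $\gf$. By Lemma~\ref{lem:S4t-gf-refutes-phi*}, there is a good valuation $V$ and a point $w$ with $\gf,V,w \md \phi^*$. Since $\tup{s,n,m} \in \Reach$, Lemmas~\ref{lem:S4t-formula-location-1}, \ref{lem:S4t-formula-location-2} and \ref{lem:S4t-sigma-location}(1)(i) give $V(\sigma(s,\overline{\psi}(n),\overline{\psi}(m))) = \set{\tup{s,n,m}_0}$. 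Applying the same lemmas with clause (1)(ii) gives $V(\sigma(t',\overline{\psi}(k'),\overline{\psi}(l'))) = \ve$. Because $\gf \md \axiom{br}_7$, $\E$ behaves as the existential universal modality. Hence $w$ satisfies the antecedent of $\chi(t',k',l')$ but not its consequent, so $\chi(t',k',l') \notin L(t,k,l)$. This is exactly the argument already used for axiom (II) in the proof of Lemma~\ref{lem:S4t-gf-Ltkl}, now with $\tup{t',k',l'}$ in place of $\tup{t,k,l}$.

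The only delicate step is applying Lemma~\ref{lem:S4t-sigma-location}(1) to the substituted formulas. We must check that under a good $V$ the triples $\overline{\psi}(k')$ and $\overline{\psi}(l')$ evaluate to $\tup{\set{b_{k'}},\set{b_{k'+1}},\set{a_{k'}}}$ and the analogous triple for $l'$. This follows from Lemma~\ref{lem:S4t-formula-location-2}(1), which gives $V(\phi_{b_i}) = \set{b_i}$ and $V(\phi_{a_i}) = \set{a_i}$. Then Lemma~\ref{lem:S4t-psi-location} gives $V(\psi_\tri(\phi_{b_i},\phi_{b_{i+1}},\phi_{a_i})) = W$, so each conjunct $\psi_\tri \wedge q$ (and likewise with $q'$, $q''$) reduces to the corresponding singleton.
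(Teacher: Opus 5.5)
Your proposal is correct and follows the paper's proof. It uses the same reduction $\tup{t',k',l'} \mapsto \chi(t',k',l')$, with Lemma~\ref{lem:S4t-syntax-main} giving membership for reachable configurations, and a good valuation on $\gf$ (which validates $L(t,k,l)$) refuting $\chi(t',k',l')$ for unreachable ones via Lemmas~\ref{lem:S4t-formula-location-1}, \ref{lem:S4t-formula-location-2} and \ref{lem:S4t-sigma-location}; your explicit check that $\overline{\psi}(k')$ and $\overline{\psi}(l')$ evaluate to the required triangles of singletons is a step the paper leaves implicit.
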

\begin{proof}
    Take any $\tup{t,k,l} \not\in \Reach$. Then $\gf \md L(t,k,l)$ by Lemma~\ref{lem:S4t-gf-Ltkl}. We reduce the reachability problem $\Reach$ to the decision problem of $L(t,k,l)$. Let $\tup{t',k',l'}$ be an arbitrary configuration. If $\tup{t',k',l'} \in \Reach$, then by Lemma~\ref{lem:S4t-syntax-main} we see that
    \begin{align*}
        &\phi^* \wedge \E\sigma(s, \overline{\psi}(n), \overline{\psi}(m)) \to  \phi^* \wedge \E\sigma(t', \overline{\psi}(k'), \overline{\psi}(l')) \in L(t,k,l).
    \end{align*}
    Suppose $\tup{t',k',l'} \not\in \Reach$. By Lemma~\ref{lem:S4t-gf-refutes-phi*}, there is a good valuation $V$ on $\gf$. Then, similar to the proof of Lemma~\ref{lem:S4t-gf-Ltkl}, it follows from Lemmas~\ref{lem:S4t-formula-location-1}, \ref{lem:S4t-formula-location-2}, and \ref{lem:S4t-sigma-location} that 
    \[\gf, V, \tup{s,n,m}_0 \md \sigma(s, \overline{\psi}(n), \overline{\psi}(m))\] 
    and 
    \[V(\sigma(t', \overline{\psi}(k'), \overline{\psi}(l'))) = \ve.\]
    Since $\gf \md L(t,k,l)$, we have that
    \[\phi^* \wedge \E\sigma(s, \overline{\psi}(n), \overline{\psi}(m)) \to \phi^* \wedge \E\sigma(t', \overline{\psi}(k'), \overline{\psi}(l')) \notin L(t,k,l).
    \]    
    Therefore, since $\Reach$ is undecidable, $L(t,k,l)$ is also undecidable.
\end{proof}

Now we are ready to conclude our proof of the main theorems. We repeat the statements for the convenience of the reader.

\begin{maintheorem}
    Let $P$ be a property satisfying the following two conditions. Then, $P$ is undecidable in $\NExt \TL{Grz}$.
    \begin{enumerate}
        \item The logic $\TL{Grz} \oplus \set{\axiom{bd}_2, \axiom{bw}_2, \axiom{bw}^\partial_5,\axiom{br}_7}$ has $P$,
        \item For any logic $L \in \NExt \TL{Grz}$, if $L$ has $P$, then $L$ is Kripke complete or decidable.
    \end{enumerate}
\end{maintheorem}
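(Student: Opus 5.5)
The plan is to reduce the undecidable set $\Reach$ from Theorem~\ref{Thm:undec-minsky} to the decision problem of $P$ via the computable map $\tup{t,k,l} \mapsto L(t,k,l)$. Concretely, I will show that $\tup{t,k,l} \in \Reach$ iff $L(t,k,l)$ has $P$. If $P$ were decidable in $\NExt\TL{Grz}$, composing the two procedures would decide $\Reach$, which is a contradiction.

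A technical point comes first, since Definition~\ref{def:decidable} requires the input to be a single formula $\phi$ with the logic $\TL{Grz}\oplus\phi$. Each $L(t,k,l)$ is $\TL{Grz}$ plus finitely many explicitly given axioms: $\axiom{bw}^\partial_5$, $\axiom{br}_7$, and (I)--(IV). Taking $\phi$ to be their conjunction gives $L(t,k,l) = \TL{Grz} \oplus \phi$. The formulas $\phi^*$, $\phi_u$, $\tau$, $\sigma$, $\psi_\tri$, $AxM$ and the substitution instances $\overline{\psi}(n)$ are all built by explicit syntactic recursion from the finite data $\M$ and $\tup{s,n,m}$. So the map $\tup{t,k,l}\mapsto\phi$ is computable.

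For the forward direction, suppose $\tup{t,k,l}\in\Reach$. Lemma~\ref{lem:tklReach-tabular} gives $L(t,k,l) = \TL{Grz} \oplus \set{\axiom{bd}_2, \axiom{bw}_2, \axiom{bw}^\partial_5,\axiom{br}_7}$. By condition (1) this logic has $P$.

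For the converse, suppose $\tup{t,k,l}\notin\Reach$. Lemma~\ref{lem:S4t-Ltkl-incomplete} shows $L(t,k,l)$ is Kripke incomplete, and Lemma~\ref{lem:S4t-Ltkl-undecidable} shows it is undecidable. Since $L(t,k,l)\in\NExt\TL{Grz}$, condition (2) forces $L(t,k,l)$ not to have $P$. Hence $\set{\tup{t,k,l} : L(t,k,l) \text{ has } P}$ is exactly $\Reach$.

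All substantive work has already been done in the lemmas above. The only step needing care is the computability and well-formedness check, namely that the reduction really outputs a single finite axiom over $\TL{Grz}$ as Definition~\ref{def:decidable} demands. That check is routine. I therefore expect no real obstacle: the proof is a short assembly of Lemmas~\ref{lem:tklReach-tabular}, \ref{lem:S4t-Ltkl-incomplete}, and \ref{lem:S4t-Ltkl-undecidable} with Theorem~\ref{Thm:undec-minsky}.
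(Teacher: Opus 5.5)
Your proposal is correct and follows the paper's proof: you use the computable reduction $\tup{t,k,l}\mapsto L(t,k,l)$, settle the case $\tup{t,k,l}\in\Reach$ with Lemma~\ref{lem:tklReach-tabular} and condition (1), and settle the complement with Lemmas~\ref{lem:S4t-Ltkl-incomplete} and \ref{lem:S4t-Ltkl-undecidable} together with condition (2). Your additional remark that the finitely many axioms can be conjoined into a single formula, as Definition~\ref{def:decidable} requires, is a detail the paper leaves implicit.
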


\begin{proof}
    The reduction $\tup{t,k,l} \mapsto L(t,k,l)$ is computable. So, it suffices to show the correctness of the reduction. If $\tup{t,k,l} \in \Reach$, then $L(t,k,l)$ is exactly the logic $\TL{Grz} \oplus \set{\axiom{bd}_2, \axiom{bw}_2, \axiom{bw}^\partial_5,\axiom{br}_7}$ by Lemma~\ref{lem:tklReach-tabular}, which has $P$ by the assumption (1). If $\tup{t,k,l} \not\in \Reach$, then $L(t,k,l)$ is neither Kripke complete nor decidable by Lemmas~\ref{lem:S4t-Ltkl-incomplete} and~\ref{lem:S4t-Ltkl-undecidable}. By the assumption (2), $L(t,k,l)$ does not have $P$. Thus, we obtain a reduction from the set $\Reach$, which is undecidable, to the set $\set{\phi\in\FormT:\TL{Grz}\oplus\phi \text{ has } P}$, which is therefore also undecidable.
\end{proof}

To show the second main theorem, we only need to slightly modify the reduction $\tup{t,k,l} \mapsto L(t,k,l)$ for each $d \in \omega$.

\begin{maintheoremtab}
    There are infinitely many tabular tense logics $L \in \NExt \TL{Grz}$ such that coincidence with $L$ is undecidable in $\NExt \TL{Grz}$.
\end{maintheoremtab}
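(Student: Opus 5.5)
The plan is to modify the reduction $\tup{t,k,l} \mapsto L(t,k,l)$ by a parameter $d \in \omega$. This produces pairwise distinct tabular logics $T_d$ and computable maps $\tup{t,k,l} \mapsto L_d(t,k,l)$ with two properties. First, $L_d(t,k,l) = T_d$ whenever $\tup{t,k,l} \in \Reach$. Second, $L_d(t,k,l) \neq T_d$ whenever $\tup{t,k,l} \notin \Reach$. Since $\Reach$ is undecidable, coincidence with each $T_d$ is then undecidable.

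For the target logics I will take
\[ T_d \coloneqq \TL{Grz} \oplus \set{\axiom{bd}_2, \axiom{bw}_2, \axiom{bw}^\partial_5, \axiom{br}_7} \oplus \chi_d, \]
where $\chi_d$ is a variable-free-independent but $\phi^*$-free formula chosen so that the $T_d$ are pairwise distinct. A natural first choice is $\chi_d \coloneqq \axiom{bw}^\partial_{5+d}$ with the base modified accordingly. More robustly, I would replace $\axiom{bw}^\partial_5$ by a weaker bound $\axiom{bw}^\partial_{N_d}$ for an increasing sequence $N_d \geq 5$, and set
\[ L_d(t,k,l) \coloneqq L(t,k,l) \text{ with } \axiom{bw}^\partial_5 \text{ replaced by } \axiom{bw}^\partial_{N_d}. \]
By Theorem~\ref{thm:grz+bp}, each $T_d = \TL{Grz} \oplus \set{\axiom{bd}_2, \axiom{bw}_2, \axiom{bw}^\partial_{N_d}, \axiom{br}_7}$ is tabular. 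The $T_d$ are pairwise distinct: a finite rooted frame of depth $2$ consisting of one top point seen by $N_d+1$ minimal points, which are pairwise incomparable, has $\R$-diameter $\leq 2$. It validates $T_{d'}$ for $N_{d'} > N_d$ but refutes $\axiom{bw}^\partial_{N_d}$. This gives infinitely many tabular logics.

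The positive direction is verbatim Lemma~\ref{lem:tklReach-tabular}. If $\tup{t,k,l} \in \Reach$, then Lemma~\ref{lem:S4t-syntax-main} and axiom (II) give $\neg\phi^* \in L_d(t,k,l)$, hence $\axiom{bd}_2, \axiom{bw}_2 \in L_d(t,k,l)$. Conversely, $\neg\phi^*$ proves (I)--(IV), so $L_d(t,k,l) = T_d$. For the negative direction, Lemma~\ref{lem:S4t-gf-Ltkl} goes through unchanged, since $\gf \md \axiom{bw}^\partial_5$ implies $\gf \md \axiom{bw}^\partial_{N_d}$. So $\gf \md L_d(t,k,l)$, while $\gf \nmd \neg\phi^*$ by Lemma~\ref{lem:S4t-gf-refutes-phi*}. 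Hence $\neg\phi^* \notin L_d(t,k,l)$. But $\neg\phi^* \in T_d$, so $L_d(t,k,l) \neq T_d$. (Incompleteness is not even needed here.) The map $\tup{t,k,l} \mapsto L_d(t,k,l)$ is computable, so it reduces $\Reach$ to coincidence with $T_d$, and the latter is undecidable.

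The main point to check is that the frame lemmas never used the precise bound $5$ except to ensure $\gf \md \axiom{bw}^\partial_5$. This holds by inspection: Lemmas~\ref{lem:S4t-formula-location-1}--\ref{lem:S4t-psi+-location} are statements about $\gf$ alone. The secondary point is the distinctness of the $T_d$, where the separating frames must satisfy $\axiom{bd}_2$, $\axiom{bw}_2$ and $\axiom{br}_7$. The fan frame above does: its successor sets have antichains of size at most $1$, and its depth is $2$. If this failed, I would instead separate the $T_d$ by adjoining $\axiom{bw}^\partial$-bounds through fans of larger predecessor width.
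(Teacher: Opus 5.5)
Your proposal is correct, but it takes a different route from the paper's proof.

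\textbf{The paper's route.} The paper keeps $\axiom{bw}^\partial_5$ fixed and varies the diameter bound, taking $L_d=\TL{Grz}\oplus\set{\axiom{bd}_2,\axiom{bw}_2,\axiom{bw}^\partial_5,\axiom{br}_{d+7}}$. This choice forces several extra steps:
\begin{itemize}
\item it must redefine $\E,\A$ as $\is{d+7},\all{d+7}$, so that they still act as master modalities in arbitrary Kripke frames of $L_d(t,k,l)$, which the incompleteness argument needs;
\item it cites finite garlands from the literature for pairwise distinctness;
\item it separates $L_d(t,k,l)$ from $L_d$ in the non-reachable case via Kripke incompleteness.
\end{itemize}

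\textbf{Your route.} You vary the predecessor-width bound $\axiom{bw}^\partial_{N_d}$ instead. This leaves $\axiom{br}_7$, $\E$, $\A$ and every frame lemma untouched; the only new fact needed is that $\gf\md\axiom{bw}^\partial_5$ implies $\gf\md\axiom{bw}^\partial_{N_d}$. Distinctness comes from explicit fan frames, and that check is correct. You also separate directly: $\neg\phi^*\in T_d$, while $\gf\md L_d(t,k,l)$ and $\gf\nmd\neg\phi^*$. That separation is exactly the first step of the paper's incompleteness proof.

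\textbf{Comparison.} Your argument is lighter and self-contained: no incompleteness lemma, no re-checking of the master-modality step, and no external reference. The paper's route delivers more than this statement requires. It shows the non-reachable logics are Kripke incomplete and undecidable, reusing the uniform dichotomy of Main Theorem~1. It also shows that the phenomenon already occurs among logics differing only in the diameter bound.

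\textbf{A presentational point.} Your opening description of $\chi_d$ does not work as literally stated. That is the ``variable-free-independent'' formula, with $\axiom{bw}^\partial_{5+d}$ added to a base already containing $\axiom{bw}^\partial_5$. Since $\axiom{bw}^\partial_5$ already entails $\axiom{bw}^\partial_{5+d}$, this yields no new logics. Drop it and state only the construction you actually use: replace $\axiom{bw}^\partial_5$ by $\axiom{bw}^\partial_{N_d}$ with $N_d\geq 5$ strictly increasing.
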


\begin{proof}
    For each $d \in \omega$, let $L_d = \TL{Grz} \oplus \set{\axiom{bd}_2, \axiom{bw}_2, \axiom{bw}^\partial_5,\axiom{br}_{d+7}}$. Every $L_d$ is tabular by Theorem~\ref{thm:grz+bp}. Moreover, these logics are pairwise distinct, since we can always separate them with some finite garlands (see \cite[Chapter 4]{Chen2026a}). We show that coincidence with $L_d$ is undecidable. For each configuration $\tup{t,k,l}$, we define the logic $L_d(t,k,l)$ by first replacing the axiom $\axiom{br}_7$ in $L(t,k,l)$ with $\axiom{br}_{d+7}$ and then redefining $\E$ and $\A$ as $\is{d+7}$ and $\all{d+7}$, respectively. That is,
    \begin{align*}
        L_d(t,k,l) \coloneq~ & \TL{Grz} \oplus \axiom{bw}^\partial_5 \oplus \axiom{br}_{d+7} \notag \\
        & \oplus AxM \\
        & \oplus (\phi^* \wedge \E\sigma(s, \overline{\psi}(n), \overline{\psi}(m)) \to \phi^* \wedge \E\sigma(t, \overline{\psi}(k), \overline{\psi}(l))) \to \lnot \phi^* \\ 
        & \oplus \phi^* \to \psi_\tri(\phi_{b_0}, \phi_{b_1}, \phi_{a_0}) \\
        & \oplus \phi^* \to (\psi_\tri(\overline{q}) \to \psi_\tri^+(\overline{q})).
    \end{align*}
    By repeating the previous arguments, we see that 
    \begin{enumerate}
        \item if $\tup{t,k,l} \in \Reach$, then $L_d(t,k,l) = L_d$,
        \item if $\tup{t,k,l} \not\in \Reach$, then $L_d(t,k,l)$ is Kripke incomplete, which implies that $L_d(t,k,l) \neq L_d$.
    \end{enumerate}
    Thus, $\tup{t,k,l} \mapsto L_d(t,k,l)$ is a reduction from the set $\Reach$ to the set $\set{\phi\in\FormT:\TL{Grz}\oplus\phi = L_d}$. Hence, coincidence with $L_d$ is undecidable for any $d \in \omega$. 
\end{proof}

\section*{Acknowledgments}
We thank Nick Bezhanishvili for his helpful comments on the paper. 
The second author was supported by the Student Exchange Support Program (Graduate Scholarship for Degree Seeking Students) of the Japan Student Services Organization.

\bibliographystyle{plain}
\bibliography{References}

\end{document}